\documentclass[12pt]{article}
\usepackage[a4paper, total={17cm,25cm}]{geometry}

\usepackage{amsmath,amssymb,amsfonts}
\usepackage[alphabetic,y2k,initials]{amsrefs}

\usepackage{color, graphics}
\usepackage{float}
\usepackage{tikz}
\usepackage{pdfsync}
\usepackage{hyperref}
\usepackage{multirow}

\usepackage{longtable}

\usepackage{calrsfs}
\DeclareMathAlphabet{\pazocal}{OMS}{zplm}{m}{n}

\newcommand{\C}{\pazocal{C}}
\newcommand{\Q}{\pazocal{Q}}
\newcommand{\E}{\pazocal{E}}

\newcommand{\Curve}{\mathcal{C}}

\newcommand{\refeq}{\eqref}

\def\sign{\mathrm{sign\,}}

\def\dist{\mathrm{dist}}
\def\sn{\,\mathrm{sn}}
\def\cn{\,\mathrm{cn}}
\def\dn{\,\mathrm{dn}}
\def\arccot{\mathrm{arccot}}

\newtheorem{theorem}{Theorem}[section]
\newtheorem{proposition}[theorem]{Proposition}
\newtheorem{corollary}[theorem]{Corollary}

\newtheorem{remark}[theorem]{Remark}
\newtheorem{example}[theorem]{Example}
\newtheorem{definition}[theorem]{Definition}

\newenvironment{proof}[1][Proof]{\noindent\textit{#1.} }{\hfill$\Box$\newline\medskip}

\numberwithin{equation}{section}

\setcounter{tocdepth}{2}

\usepackage{authblk}
\author[1]{Anani Komla Adabrah}
\author[1,3]{Vladimir Dragovi\'c}
\author[2,3]{Milena Radnovi\'c}
\affil[1]{\textsc{The University of Texas at Dallas, Department of Mathematical Sciences}}
\affil[2]{\textsc{The University of Sydney, School of Mathematics and Statistics}}
\affil[3]{\textsc{Mathematical Institute SANU, Belgrade}}
\affil[ ]{\texttt{ananikomla.adabrah@utdallas.edu, vladimir.dragovic@utdallas.edu, milena.radnovic@sydney.edu.au}}

\date{}

\title{Periodic billiards within conics in the Minkowski plane and Akhiezer polynomials}

\begin{document}
	
	\maketitle
	
\begin{abstract}
We derive necessary and sufficient conditions for periodic and for elliptic periodic trajectories of billiards within an ellipse in the Minkowski plane in terms of an underlining elliptic curve.
We provide several examples of periodic and elliptic periodic trajectories with small periods.
We observe relationship between Cayley-type conditions and discriminantly separable and factorizable polynomials.
Equivalent conditions for periodicity and elliptic periodicity are derived in terms of polynomial-functional equations as well. The corresponding polynomials are related to the classical extremal polynomials.
In particular, the light-like periodic trajectories are related to the classical Chebyshev polynomials.
The similarities and differences with respect to previously studied Euclidean case are indicated.

\

\noindent
\textsc{MSC2010 numbers}: \texttt{14H70, 41A10, 70H06, 37J35, 26C05}
\newline
\textsc{Keywords}: Minkowski plane, relativistic ellipses and hyperbolas, elliptic billiards, periodic and elliptic periodic trajectories, extremal polynomials, Chebyshev polynomials, Akhiezer polynomials,  discriminantly separable polynomials

\end{abstract}

\tableofcontents

\section{Introduction}\label{sec:intro}

Billiards within quadrics in pseudo-Euclidean spaces were studied in \cites{KhTab2009,DragRadn2012adv,DragRadn2013publ}.
In \cites{DragRadn2018,DragRadn2019rcd}, the relationship between the billiards within quadrics in the Euclidean spaces and extremal polynomials has been studied.
The aim of this paper is to develop the connection between extremal polynomials and billiards in the Minkowski plane.

This paper is organised as follows.
In Section \ref{sec:confocal}, we recall the basic notions connected with the Minkowski plane, confocal families of conics, relativistic ellipses and hyperbolas, and billiards.
In Section \ref{sec:periodic}, we give a complete description of the periodic billiard trajectories in algebro-geometric terms.
In Section \ref{sec:small}, we use the conditions obtained in the previous section to study examples of periodic trajectories with small periods.

We also emphasize intriguing connection between the Cayley-type conditions and discriminantly separable polynomials.
The notion of relativistic ellipses and hyperbolas enables definition of Jacobi-type elliptic coordinates in the Minkowski setting.
Since the correspondence between Cartesian and elliptic coordinates is not one-to-one, there is a notion of elliptic periodicity which refers to a weaker assumption that a trajectory is periodic in elliptic coordinates.
In Section \ref{sec:elliptic}, we provide algebro-geometric characterization of trajectories to be $n$-elliptic periodic without being $n$-periodic.
Section \ref{sec:examples-elliptic} provides examples and connections with discriminantly separable polynomials.
In Section \ref{sec:polynomial}, we derive a characterisation of elliptic periodic trajectories using polynomial equations.
In the last Section \ref{sec:extremal}, we establish the connection between characteristics of periodic billiard trajectories and extremal polynomials:
the Zolotarev polynomials, the Akhiezer polynomials on symmetric intervals, and the general Akhiezer polynomials on two intervals.
We conclude our study of the relationship of billiards in the Minkowski plane with the extremal polynomials by relating the case of light-like trajectories to the classical Chebyshev polynomials, see Section \ref{sec:ll}.

Apart from similarities with previously studied Euclidean spaces, see \cite{DragRadn2018, DragRadn2019rcd}, there are also significant differences:
for example, among the obtained extremal polynomials are such with winding numbers $(3,1)$, which was never the case in the Euclidean setting.

\section{Confocal families of conics and billiards}
\label{sec:confocal}

\paragraph*{The Minkowski plane} is $\mathbf{R}^2$ with \emph{the Minkowski scalar product}: $\langle X,Y\rangle=X_1Y_1-X_2Y_2$.

\emph{The Minkowski distance} between points $X$, $Y$ is
$
\dist(X,Y)=\sqrt{\langle{X-Y,X-Y}\rangle}.
$
Since the scalar product can be negative, notice that the Minkowski distance can have imaginary values as well.
In that case, we choose the value of the square root with the positive imaginary part.

Let $\ell$ be a line in the Minkowski plane, and $v$ its vector.
$\ell$ is called
\emph{space-like} if $\langle{v,v}\rangle>0$;
\emph{time-like} if $\langle{v,v}\rangle<0$;
and \emph{light-like} if $\langle{v,v}\rangle=0$.
Two vectors $x$, $y$ are \emph{orthogonal} in the Minkowski plane if $\langle x,y \rangle=0$.
Note that a light-like vector is orthogonal to itself.

\paragraph*{Confocal families.}
Denote by
\begin{equation}\label{eq:ellipse}
\E\ :\ \frac{\mathsf{x}^2}{a}+\frac{\mathsf{y}^2}{b}=1
\end{equation}
an ellipse in the plane, with $a$, $b$ being fixed positive numbers.

The associated family of confocal conics is:
\begin{equation}\label{eq:confocal.conics}
\C_{\lambda}\ :\
\frac{\mathsf{x}^2}{a-\lambda}+\frac{\mathsf{y}^2}{b+\lambda}=1, \quad
\lambda\in\mathbf{R}.
\end{equation}

The family is shown on Figure \ref{fig:confocal.conics}.
\begin{figure}[h]
	\begin{center}
\begin{tikzpicture}[scale=1.5]



\draw[thick,dashed](.7,0) arc (0:60:0.7cm and 1.2cm);
\draw[thick](.35,1.05) arc (60:120:0.7cm and 1.2cm);
\draw[thick,dashed](-.35,1.05) arc (120:180:0.7cm and 1.2cm);
\draw[thick,dashed](-.7,0) arc (180:240:0.7cm and 1.2cm);
\draw[thick](-.35,-1.05) arc (240:300:0.7cm and 1.2cm);
\draw[thick,dashed](.35,-1.05) arc (300:360:0.7cm and 1.2cm);


\draw[thick,dashed](1.325,-.01) arc (0:30:1.32287565553cm and .5cm);
\draw[thick](1.14,.25) arc (30:150:1.32287565553cm and .5cm);
\draw[thick,dashed](-1.14,.25) arc (150:210:1.32287565553cm and .5cm);
\draw[thick](-1.14,-.25) arc (210:330:1.32287565553cm and .5cm);
\draw[thick,dashed](1.14,-.25) arc (330:360:1.32287565553cm and .5cm);

\draw[thick,dashed](1.13,0) arc (0:30:1.1401754251cm and 0.83666002653 cm); 
\draw[thick](.99,.41) arc (30:150:1.1401754251cm and 0.83666002653 cm);
\draw[thick,dashed](-.99,.41) arc (150:210:1.1401754251cm and 0.83666002653 cm);
\draw[thick](-.99,-.41) arc (210:330:1.1401754251cm and 0.83666002653 cm);
\draw[thick,dashed](.99,-.41) arc (330:357:1.1401754251cm and 0.83666002653 cm);

\draw[gray] (2.61,-1.2) -- (-1.2,2.61);
\draw[gray] (2.61,1.2) -- (-1.2,-2.61);
\draw[gray] (-2.61,1.2) -- (1.2,-2.61);
\draw[gray] (-2.61,-1.2) -- (1.2,2.61);

\draw[domain=-1.5:-.9,smooth,thick,variable=\t] plot ({\t},{sqrt(1+2.5)*sqrt(1-\t*\t/(1-2.5))});
\draw[domain=-.9:.9,smooth,thick,dashed,variable=\t] plot ({\t},{sqrt(1+2.5)*sqrt(1-\t*\t/(1-2.5))});
\draw[domain=.9:1.5,smooth,thick,variable=\t] plot ({\t},{sqrt(1+2.5)*sqrt(1-\t*\t/(1-2.5))});

\draw[domain=-1.5:-.9,smooth,thick,variable=\t] plot ({\t},{-sqrt(1+2.5)*sqrt(1-\t*\t/(1-2.5))});
\draw[domain=-.9:.9,smooth,thick,dashed,variable=\t] plot ({\t},{-sqrt(1+2.5)*sqrt(1-\t*\t/(1-2.5))});
\draw[domain=.9:1.5,smooth,thick,variable=\t] plot ({\t},{-sqrt(1+2.5)*sqrt(1-\t*\t/(1-2.5))});

\draw[domain=-1.2:-.5,smooth,thick,variable=\t] plot ({\t},{sqrt(1+1.5)*sqrt(1-\t*\t/(1-1.5))});
\draw[domain=-.5:.5,smooth,thick,dashed,variable=\t] plot ({\t},{sqrt(1+1.5)*sqrt(1-\t*\t/(1-1.5))});
\draw[domain=.5:1.2,smooth,thick,variable=\t] plot ({\t},{sqrt(1+1.5)*sqrt(1-\t*\t/(1-1.5))});

\draw[domain=-1.2:-.5,smooth,thick,variable=\t] plot ({\t},{-sqrt(1+1.5)*sqrt(1-\t*\t/(1-1.5))});
\draw[domain=-.5:.5,smooth,thick,dashed,variable=\t] plot ({\t},{-sqrt(1+1.5)*sqrt(1-\t*\t/(1-1.5))});
\draw[domain=.5:1.2,smooth,thick,variable=\t] plot ({\t},{-sqrt(1+1.5)*sqrt(1-\t*\t/(1-1.5))});

\draw[domain=-1.5:-.9,smooth,thick,dashed,variable=\t] plot ({sqrt(1+2.5)*sqrt(1-\t*\t/(1-2.5))},{\t});
\draw[domain=-.9:.9,smooth,thick,variable=\t] plot ({sqrt(1+2.5)*sqrt(1-\t*\t/(1-2.5))},{\t});
\draw[domain=.9:1.5,smooth,thick,dashed,variable=\t] plot ({sqrt(1+2.5)*sqrt(1-\t*\t/(1-2.5))},{\t});

\draw[domain=-1.5:-.9,smooth,thick,,dashed,variable=\t] plot ({-sqrt(1+2.5)*sqrt(1-\t*\t/(1-2.5))},{\t});
\draw[domain=-.9:.9,smooth,thick,variable=\t] plot ({-sqrt(1+2.5)*sqrt(1-\t*\t/(1-2.5))},{\t});
\draw[domain=.9:1.5,smooth,thick,,dashed,variable=\t] plot ({-sqrt(1+2.5)*sqrt(1-\t*\t/(1-2.5))},{\t});

\draw[domain=-1.2:-.4,smooth,thick,dashed,variable=\t] plot ({sqrt(1+1.5)*sqrt(1-\t*\t/(1-1.5))},{\t});
\draw[domain=-.4:.4,smooth,thick,variable=\t] plot ({sqrt(1+1.5)*sqrt(1-\t*\t/(1-1.5))},{\t});
\draw[domain=.4:1.2,smooth,thick,dashed,variable=\t] plot ({sqrt(1+1.5)*sqrt(1-\t*\t/(1-1.5))},{\t});

\draw[domain=-1.2:-.4,smooth,thick,,dashed,variable=\t] plot ({-sqrt(1+1.5)*sqrt(1-\t*\t/(1-1.5))},{\t});
\draw[domain=-.4:.4,smooth,thick,variable=\t] plot ({-sqrt(1+1.5)*sqrt(1-\t*\t/(1-1.5))},{\t});
\draw[domain=.4:1.2,smooth,thick,,dashed,variable=\t] plot ({-sqrt(1+1.5)*sqrt(1-\t*\t/(1-1.5))},{\t});

\end{tikzpicture}
		
	\end{center}
	\caption{Family of confocal conics in the Minkowski
		plane. Solid lines represent relativistic ellipses, and dashed ones relativistic hyperbolas.}\label{fig:confocal.conics}
\end{figure}
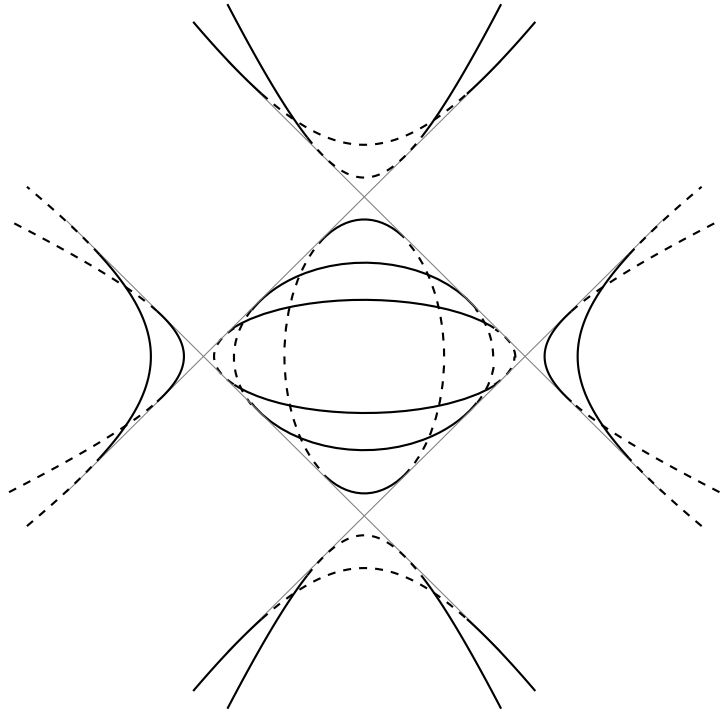
We may distinguish the following three subfamilies in the family
\refeq{eq:confocal.conics}:
for $\lambda\in(-b,a)$, conic $\C_{\lambda}$ is an ellipse;
for $\lambda<-b$, conic $\C_{\lambda}$ is a hyperbola with $\mathsf{x}$-axis as the major one;
for $\lambda>a$, it is a hyperbola again, but now its major axis is $\mathsf{y}$-axis.
In addition, there are three degenerated quadrics: $\C_{a}$, $\C_{b}$, $\C_{\infty}$ corresponding to $\mathsf{y}$-axis, $\mathsf{x}$-axis, and the line at the infinity respectively.

The confocal family has three pairs of foci:
$F_1(\sqrt{a+b},0)$, $F_2(-\sqrt{a+b},0)$;
$G_1(0,\sqrt{a+b})$, $G_2(0,-\sqrt{a+b})$; and
$H_1(1:-1:0)$, $H_2(1:1:0)$ on the line at the infinity.

We notice four distinguished lines:
\begin{align*}
&\mathsf{x}+\mathsf{y}=\sqrt{a+b},\quad \mathsf{x}+\mathsf{y}=-\sqrt{a+b},\\
&\mathsf{x}-\mathsf{y}=\sqrt{a+b},\quad \mathsf{x}-\mathsf{y}=-\sqrt{a+b}.
\end{align*}
These lines
are common tangents to all conics from the family.

Conics in the Minkowski plane have geometric properties analogous to the conics in the Euclidean plane.
Namely, for each point on conic $\C_{\lambda}$, either sum or difference of its Minkowski distances from the foci $F_1$ and $F_2$ is equal to $2\sqrt{a-\lambda}$;
either sum or difference of the distances from the other pair of foci $G_1$, $G_2$ is equal to $2\sqrt{-b-\lambda}$ \cite{DragRadn2012adv}.

In the Minkowkski plane, it is natural to consider relativistic conics, which are suggested in \cite{BirkM1962}.
In this section, we give a brief account of the related analysis.

Consider points $F_1(\sqrt{a+b},0)$ and $F_2(-\sqrt{a+b},0)$.

For a given constant $c\in\mathbf{R}^{+}\cup i\mathbf{R}^{+}$, \emph{a relativistic ellipse} is the set of points $X$ satisfying
$
\dist(F_1,X)+\dist(F_2,X)=2c,
$
while \emph{a relativistic hyperbola} is the union of the sets given by the following equations:
\begin{gather*}
\dist(F_1,X)-\dist(F_2,X)=2c,\\
\dist(F_2,X)-\dist(F_1,X)=2c.
\end{gather*}

Relativistic conics can be described as follows.
\begin{description}
	\item[$0<c<\sqrt{a+b}$]
	The corresponding relativistic conics lie on ellipse $\C_{a-c^2}$ from family \refeq{eq:confocal.conics}.
	The ellipse $\C_{a-c^2}$ is split into four arcs by touching points with the four common tangent lines; thus, the relativistic ellipse is the union of the two arcs intersecting the $\mathsf{y}$-axis, while the relativistic hyperbola is the union of the other two arcs.
	
	\item[$c>\sqrt{a+b}$]
	The relativistic conics lie on $\C_{a-c^2}$ -- a hyperbola with $\mathsf{x}$-axis as the major one.
	Each branch of the hyperbola is split into three arcs by touching points with the common tangents; thus, the relativistic ellipse is the union of the two finite arcs, while the relativistic hyperbola is the union of the four infinite ones.
	
	\item[$c$ is imaginary]
	The relativistic conics lie on hyperbola $\C_{a-c^2}$ -- a hyperbola with $\mathsf{y}$-axis as the major one.
	As in the previous case, the branches are split into six arcs in total by common points with the four tangents.
	The relativistic ellipse is the union of the four infinite arcs, while the relativistic hyperbola is the union of the two finite ones.
\end{description}

Notice that all relativistic ellipses are disjoint with each other, as well as all relativistic hyperbolas, see Figure \ref{fig:confocal.conics}.
Moreover, at the intersection point of a relativistic ellipse which is a part of the geometric conic $\C_{\lambda_1}$ from the confocal family \refeq{eq:confocal.conics} and a relativistic hyperbola belonging to $\C_{\lambda_2}$, it is always $\lambda_1<\lambda_2$.

\paragraph*{Elliptic coordinates.}
Each point inside ellipse $\E$ has elliptic coordinates $(\lambda_1,\lambda_2)$, such that $-b<\lambda_1<0<\lambda_2<a$.

The differential equation of the lines touching a given conic $\C_{\gamma}$ is:
\begin{equation}\label{eq:diff-eq}
\frac{d\lambda_1}{\sqrt{(a-\lambda_1)(b+\lambda_1)(\gamma-\lambda_1)}}
+
\frac{d\lambda_2}{\sqrt{(a-\lambda_2)(b+\lambda_2)(\gamma-\lambda_2)}}=0.
\end{equation}

\paragraph*{Billiards.}

Let $v$ be a vector and $p$ a line in the Minkowski plane.
Decompose vector $v$ into the sum $v=a+n_{p}$ of a vector $n_{p}$ orthogonal to $p$ and $a$ belonging to $p$.
Then vector $v'=a-n_{p}$ is \emph{the billiard reflection} of $v$ on $p$.
It is easy to see that $v$ is also the billiard reflection of $v'$ with respect to $p$.
Moreover, since $\langle{v,v}\rangle=\langle{v',v'}\rangle$, vectors $v$, $v'$ are of the same type.


Note that $v=v'$ if $v$ is contained in $p$ and $v'=-v$ if it is orthogonal to $p$.
If $n_{p}$ is light-like, which means that it belongs to $p$, then the reflection is not defined.

Line $\ell'$ is \emph{the billiard reflection} of $\ell$ off ellipse $\E$ if their intersection point $\ell\cap\ell'$ belongs to $\E$ and the vectors of $\ell$, $\ell'$ are reflections of each other with respect to the tangent line of $\E$ at this point.

The lines containing segments of a given billiard trajectory within $\E$ are all of the same type: they are all either space-like, time-like, or light-like.
For the detailed explanation, see \cite{KhTab2009}.

Billiard trajectories within ellipses in the Minkowski plane have caustic properties: each segment of a given trajectory will be tangent to the same conic confocal with the boundary, see \cite{DragRadn2012adv}.
More about Minkowski plane or higher-dimensional pseudo-Euclidean spaces and related integrable systems can be found in \cites{BirkM1962,GKT2007,WFSWZZ2009, JJ1, JJ2}.

\section{Periodic trajectories}\label{sec:periodic}

Sections \ref{sec:periodic}--\ref{sec:extremal} deal with the trajectories with non-degenerate caustic $\C_{\gamma}$, which will mean that $\gamma\in\mathbf{R}\setminus\{-b,a\}$.
Such trajectories are either space-like or time-like.
The case of light-like trajectories, which correspond to the degenerate caustic $\C_{\infty}$ is considered separately, in Section \ref{sec:ll}.

The periodic trajectories of elliptical billiards in the Minkowski plane can be characterized in algebro-geometric terms using the underlying elliptic curve:

\begin{theorem}\label{th:curve-billiard}
	The billiard trajectories within $\E$ with non-degenerate caustic $\C_{\gamma}$ are $n$-periodic if and only if  $nQ_{0}\sim nQ_{\gamma}$ on the elliptic curve:
	\begin{equation}\label{eq:billiard-cubic}
	\Curve\ :\ y^2=\varepsilon(a-x)(b+x)(\gamma-x),
	\end{equation}
	with $Q_0$ being a point of $\Curve$ corresponding to $x=0$, $Q_{\gamma}$ to $x=\gamma$, and $\varepsilon=\sign\gamma$.
\end{theorem}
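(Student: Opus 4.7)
\medskip
\noindent\textbf{Proof plan.} My approach is to adapt the classical Jacobi--Poncelet argument to the Minkowski setting, using the differential equation \eqref{eq:diff-eq} of the confocal family as an Abel-type equation on $\Curve$. First, I would rewrite \eqref{eq:diff-eq} in the form
\[
\frac{d\lambda_1}{y_1} + \frac{d\lambda_2}{y_2} = 0,
\qquad
y_i^2 = \varepsilon(a-\lambda_i)(b+\lambda_i)(\gamma-\lambda_i),
\]
so that $P_i := (\lambda_i, y_i)$ lies on $\Curve$. The sign $\varepsilon = \sign\gamma$ is chosen precisely so that the right-hand side is non-negative on the range of $\lambda_i$ relevant to the given subfamily of caustics, making $\Curve$ a nonsingular real elliptic curve for every admissible $\gamma$. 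Integration then shows that the Abel sum $A(P_1) + A(P_2)$ is conserved along every billiard trajectory tangent to $\C_\gamma$.

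Next, I would lift the billiard dynamics to $\Curve$. The data at a reflection off $\E = \C_0$ -- a point of $\E$ together with the outgoing direction, equivalently a tangent line from a point of $\E$ to $\C_\gamma$ -- is naturally parametrised by $\Curve$: the two preimages $Q_0^{\pm}$ of $x=0$ record the two possible directions at a given reflection point, while the branch point $Q_\gamma = (\gamma, 0)$ records the tangency with the caustic. Using the conservation of the Abel sum across one segment (reflection $\to$ tangency with $\C_\gamma$ $\to$ next reflection), together with a careful accounting of the sheet change that occurs at each event, one verifies that the induced billiard map on $\Curve$ is a translation by a fixed element $\tau$ of the Jacobian whose class is that of $Q_\gamma - Q_0$. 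Hence the trajectory is $n$-periodic if and only if $n\tau \sim 0$, i.e.\ $nQ_0 \sim nQ_\gamma$.

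The main obstacle will be the case analysis across the four admissible ranges of $\gamma$ ($\gamma < -b$, $-b < \gamma < 0$, $0 < \gamma < a$, $\gamma > a$), each producing a different ordering of the roots of the cubic in \eqref{eq:billiard-cubic} and a different geometric picture for the caustic (relativistic ellipse or hyperbola, with the subtypes distinguished in Section \ref{sec:confocal}). The role of $\varepsilon = \sign\gamma$ is precisely to uniformise the real structure of $\Curve$ across these cases, while the fact that $Q_\gamma$ is a Weierstrass point of the double cover $(x,y) \mapsto x$ ensures that the final condition $nQ_0 \sim nQ_\gamma$ is independent of the choice of preimage $Q_0^{\pm}$, so that the sheet-convention bookkeeping does not obstruct a uniform statement.
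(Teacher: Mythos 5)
Your overall strategy---pass to the curve $\Curve$ via the differential \eqref{eq:diff-eq}, use conservation of the Abel sum, and read off periodicity as a torsion condition---is the same algebro-geometric framework the paper uses, but the step where you assert that ``the induced billiard map on $\Curve$ is a translation by a fixed element $\tau$ whose class is that of $Q_\gamma-Q_0$, hence the trajectory is $n$-periodic if and only if $n\tau\sim0$'' hides the entire difficulty of this theorem, and as written the argument has a genuine gap. The lift you construct goes through the elliptic coordinates $(\lambda_1,\lambda_2)$, and these do not separate points of the plane that are symmetric with respect to the coordinate axes. Consequently, closing up on $\Curve$ after $n$ steps is a priori only $n$-periodicity \emph{in elliptic coordinates}, which the paper shows (Theorem \ref{th:elliptic-periodic}) is a strictly weaker condition characterized by \emph{different} divisor relations, e.g.\ $nQ_0-(n-1)Q_\gamma-Q_{-b}\sim0$. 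Your argument, taken at face value, would equally ``prove'' that those trajectories satisfy $nQ_0\sim nQ_\gamma$, which is false. A related symptom: a single billiard step is not a uniform event in the elliptic-coordinate picture---at a reflection off a time-like arc it is $\lambda_1$ that returns to $0$, while off a space-like arc it is $\lambda_2$---so the ``one segment'' bookkeeping you defer to ``careful accounting of the sheet change'' is not the same from segment to segment, and the identification of $\tau$ with the class of $Q_\gamma-Q_0$ is asserted rather than derived.

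The paper closes exactly this gap with a counting-and-parity argument that your plan is missing. Integrating \eqref{eq:diff-eq} once around a genuinely closed trajectory gives $n_1(Q_0-Q_{\alpha_1})+n_2(Q_0-Q_{\beta_1})\sim0$, where $n_1$, $n_2$ are the numbers of reflections off relativistic ellipses and hyperbolas, $n_1+n_2=n$, and $\alpha_1$, $\beta_1$ are the extreme values of the elliptic coordinates. The key additional input is geometric: a trajectory that is closed in Cartesian coordinates crosses each coordinate axis an even number of times, which forces the relevant count ($n_2$ in Case 1, $n_1$ in Case 2, both in the hyperbola case) to be even; since $Q_a$, $Q_{-b}$, $Q_\gamma$ are all branch points, $2Q_a\sim2Q_{-b}\sim2Q_\gamma$, and the even multiples can be freely exchanged, collapsing the relation to $nQ_0\sim nQ_\gamma$. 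Your observation that the final condition is independent of the choice of preimage $Q_0^{\pm}$ (because $Q_0^++Q_0^-\sim2Q_\gamma$) is correct and useful, but it does not substitute for the parity analysis; without it your proof cannot distinguish the theorem you are proving from the statements of Section \ref{sec:elliptic}.
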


\begin{proof}
Along a billiard trajectory within $\E$ with caustic $\C_{\gamma}$, the elliptic coordinate $\lambda_1$ traces the segment $[\alpha_1,0]$, and $\lambda_2$ the segment $[0,\beta_1]$, where $\alpha_1$ is the largest negative and $\beta_1$ the smallest positive member of the set $\{a,-b,\gamma\}$.

\emph{Case 1.}
If $\C_{\gamma}$ is an ellipse and $\gamma<0$, then $\alpha_1=\gamma$, $\beta_1=a$.
The coordinate $\lambda_1$ takes value $\lambda_1=\gamma$ at the touching points with the caustic and value $\lambda_1=0$ at the reflection points off the arcs of $\E$ where the restricted metric is time-like.
On the other hand, $\lambda_2$ takes value $\lambda_2=a$ at the intersections with $\mathsf{y}$-axis, and $\lambda_2=0$ at the reflection points off the arcs of $\E$ where the restricted metric is space-like.

\emph{Case 2.}
If $\C_{\gamma}$ is an ellipse and $\gamma>0$, then $\alpha_1=-b$, $\beta_1=\gamma$.
The coordinate $\lambda_1$ takes value $\lambda_1=-b$ at the intersections with $\mathsf{x}$-axis and value $\lambda_1=0$ at the reflection points off the arcs of $\E$ where the restricted metric is time-like.
On the other hand, $\lambda_2$ takes value $\lambda_2=\gamma$ at the touching points with the caustic, and $\lambda_2=0$ at the reflection points off the arcs of $\E$ where the restrictes metric is space-like.

\emph{Case 3.}
If $\C_{\gamma}$ is a hyperbola, then $\alpha_1=-b$, $\beta_1=a$.
The coordinate $\lambda_1$ takes value $\lambda_1=-b$ at the intersections with $\mathsf{x}$-axis and value $\lambda_1=0$ at the reflection points off the arcs of $\E$ where the restricted metric is time-like.
On the other hand, $\lambda_2$ takes value $\lambda_2=a$ at the intersections with $\mathsf{y}$-axis, and $\lambda_2=0$ at the reflection points off the arcs of $\E$ where the restricted metric is space-like.

In each case, the elliptic coordinates change monotonously between their extreme values.

Consider an $n$-periodic billiard trajectory and denote by $n_1$ the number of reflections off time-like arcs, i.e.~off relativistic ellipses, and by $n_2$ the number of reflections off space-like arcs, i.e.~relativistic hyperbolas.
Obviously, $n_1+n_2=n$.
Integrating \eqref{eq:diff-eq} along the trajectory, we get:
\begin{equation}\label{InteEqn}
n_1\int_{\alpha_1}^{0}\frac{d\lambda_1}{\sqrt{\varepsilon(a-\lambda_1)(b+\lambda_1)(\gamma-\lambda_1)}}
+
n_2\int_{\beta_1}^{0}\frac{d\lambda_2}{\sqrt{\varepsilon(a-\lambda_2)(b+\lambda_2)(\gamma-\lambda_2)}}
=0,
\end{equation}
i.e.
$$
n_1(Q_0-Q_{\alpha_1})+n_2(Q_0-Q_{\beta_1})\sim0.
$$
In Case 1, this is equivalent to
$$
n_1(Q_0-Q_{\gamma})+n_2(Q_0-Q_{a})\sim n(Q_0-Q_{\gamma}),
$$
since a closed trajectory crosses the $\mathsf{y}$-axis even number of times, i.e~$n_2$ must be even, and $2Q_{a}\sim2Q_{\gamma}$.

Similarly, in Case 2, it follows since $n_1$ is even, and in Case 3 both $n_1$ and $n_2$ need to be even.
\end{proof}

From the proof of Theorem \ref{th:curve-billiard}, we have:

\begin{corollary}
The period of a closed trajectory with hyperbola as caustic is even.
\end{corollary}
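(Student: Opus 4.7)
The plan is to extract this fact directly from Case 3 of the proof of Theorem \ref{th:curve-billiard}, where the caustic $\C_\gamma$ is a hyperbola. In that case $\alpha_1 = -b$ and $\beta_1 = a$, so the elliptic coordinate $\lambda_1$ oscillates monotonically between $-b$ (at crossings of the $\mathsf{x}$-axis) and $0$ (at reflections off the time-like arcs of $\E$), and $\lambda_2$ oscillates monotonically between $a$ (at crossings of the $\mathsf{y}$-axis) and $0$ (at reflections off the space-like arcs of $\E$).

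Next I would use the monotonicity statement established in the proof of the theorem to match up counts. Between two consecutive instances of $\lambda_1 = 0$ along the trajectory, the coordinate must first descend monotonically to the opposite extreme $\lambda_1 = -b$ and then ascend back; hence the number of times $\lambda_1$ attains $-b$ equals the number of times it attains $0$. The former is the number of $\mathsf{x}$-axis crossings during one period, while the latter is $n_1$, the number of reflections off time-like arcs. An analogous equality holds for $\lambda_2$: the number of $\mathsf{y}$-axis crossings equals $n_2$.

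The topological step is that a closed curve in the plane must cross each coordinate axis an even number of times, since the sign of each coordinate must return to its initial value after one period. Combined with the previous step, this forces both $n_1$ and $n_2$ to be even, and therefore $n = n_1 + n_2$ is even.

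The only potentially subtle point is the monotonicity-to-counting correspondence, but this is already asserted in the proof of Theorem \ref{th:curve-billiard} (``In each case, the elliptic coordinates change monotonously between their extreme values''), so no new work is required; the corollary is essentially a restatement of the parity observation embedded in Case 3.
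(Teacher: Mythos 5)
Your proposal is correct and follows essentially the same route as the paper: the corollary is stated there as an immediate consequence of Case 3 in the proof of Theorem \ref{th:curve-billiard}, where the alternation of the extreme values of $\lambda_1$ and $\lambda_2$ identifies $n_1$ and $n_2$ with the numbers of $\mathsf{x}$-axis and $\mathsf{y}$-axis crossings, each of which is even for a closed trajectory. Your write-up merely makes the monotonicity-to-counting step more explicit than the paper does.
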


\begin{theorem}\label{th:cayley-billiard}
The billiard trajectories within $\E$ with caustic $\C_{\gamma}$ are $n$-periodic if and only if:
	\begin{gather*}
	C_2=0,
	\quad
	\left|
	\begin{array}{cc}
	C_2 & C_3
	\\
	C_3 & C_4
	\end{array}
	\right|=0,
	\quad
	\left|
	\begin{array}{ccc}
	C_2 & C_3 & C_4
	\\
	C_3 & C_4 & C_5
	\\
	C_4 & C_5 & C_6
	\end{array}
	\right|=0,
	\dots
	\quad\text{for}\quad n=3,5,7,\dots
	\\
	B_3=0,
	\quad
	\left|
	\begin{array}{cc}
	B_3 & B_4
	\\
	B_4 & B_5
	\end{array}
	\right|=0,
	\quad
	\left|
	\begin{array}{ccc}
	B_3 & B_4 & B_5
	\\
	B_4 & B_5 & B_6
	\\
	B_5 & B_6 & B_7
	\end{array}
	\right|=0,
	\dots
	\quad\text{for}\quad n=4,6,8,\dots.
	\end{gather*}
	Here, we denoted:
	\begin{gather*}
	\sqrt{\varepsilon(a-x)(b+x)(\gamma-x)}=B_0+B_1x+B_2x^2+\dots,
	\\
	\frac{\sqrt{\varepsilon(a-x)(b+x)(\gamma-x)}}{\gamma-x}=C_0+C_1x+C_2x^2+\dots,
	\end{gather*}
	the Taylor expansions around $x=0$.
\end{theorem}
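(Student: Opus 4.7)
The plan is to translate the linear equivalence $nQ_{0}\sim nQ_{\gamma}$ from Theorem \ref{th:curve-billiard} into the existence of an explicit meromorphic function on $\Curve$ with a prescribed divisor, and then to read off this existence as the vanishing of Hankel-type determinants in the Taylor coefficients at $x=0$.

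First I will use that $Q_{\gamma}$ is a branch point of the double cover $(x,y)\mapsto x$, so $2Q_{\gamma}\sim 2\infty$, where $\infty$ denotes the unique point of $\Curve$ at infinity. Fixing $Q_{0}$ to be the point with $x=0$ and $y=B_{0}=\sqrt{\varepsilon ab\gamma}$, and writing $Q_{0}^{-}$ for its hyperelliptic conjugate, the condition $nQ_{0}\sim nQ_{\gamma}$ splits by parity: for $n=2m$ it becomes $nQ_{0}\sim 2m\infty$, so I seek $f$ with $\operatorname{div}(f)=2m\infty-2mQ_{0}$; for $n=2m+1$ it becomes $nQ_{0}\sim 2m\infty+Q_{\gamma}$, so I seek $f$ with $\operatorname{div}(f)=2m\infty+Q_{\gamma}-(2m+1)Q_{0}$.

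Since every function on $\Curve$ has the form $A(x)+yB(x)$ with $A,B$ rational in $x$, I try the ansatz $f=(A(x)+yB(x))/x^{n}$, adding in the odd case the constraint $A(\gamma)=0$ to produce the simple zero at $Q_{\gamma}$. Matching the pole order at $\infty$ fixes degree bounds: $\deg A\le m$ and $\deg B\le m-2$ in the even case, and $\deg A\le m+1$ and $\deg B\le m-1$ in the odd case. Regularity of $f$ at $Q_{0}^{-}$, where $x$ is a local uniformizer and $y$ has Taylor expansion $-(B_{0}+B_{1}x+B_{2}x^{2}+\cdots)$, becomes the congruence
\[
A(x)-\bigl(B_{0}+B_{1}x+B_{2}x^{2}+\cdots\bigr)B(x)\equiv 0\pmod{x^{n}}.
\]
In the odd case the substitution $A(x)=(\gamma-x)\widetilde A(x)$ absorbs $A(\gamma)=0$ and, after cancelling the unit $\gamma-x$ at $x=0$, converts the congruence into $\widetilde A(x)-C(x)B(x)\equiv 0\pmod{x^{n}}$, where $C(x)=(B_{0}+B_{1}x+B_{2}x^{2}+\cdots)/(\gamma-x)=C_{0}+C_{1}x+\cdots$ is precisely the series appearing in the statement.

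The resulting homogeneous linear system in the coefficients of $A$ (or $\widetilde A$) and $B$ is square with exactly $n$ unknowns and $n$ equations. The low-order equations express the coefficients of $A$ (or $\widetilde A$) as linear combinations of the coefficients of $B$, and the remaining top equations form a square system in the coefficients of $B$ whose matrix, after an elementary reversal of columns, is the Hankel matrix with entries $B_{i+j+3}$ in the even case and $C_{i+j+2}$ in the odd case. Non-triviality of the solution is equivalent to vanishing of this Hankel determinant, and the small cases $n=3\Rightarrow C_{2}=0$, $n=4\Rightarrow B_{3}=0$, $n=5\Rightarrow C_{2}C_{4}-C_{3}^{2}=0$, $n=6\Rightarrow B_{3}B_{5}-B_{4}^{2}=0$ confirm the indexing of the nested determinants in the statement. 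The main technical difficulty is the careful bookkeeping of pole and zero orders at $\infty$, $Q_{\gamma}$, and $Q_{0}^{\pm}$, which forces the parity split and explains why the $B_{i}$-series governs even $n$ while the $C_{i}$-series governs odd $n$.
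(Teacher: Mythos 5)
Your proposal is correct and follows essentially the same route as the paper: both use $2Q_{\gamma}\sim 2Q_{\infty}$ to split by parity, reduce the condition $nQ_{0}\sim nQ_{\gamma}$ to the existence of a function in $\mathcal{L}(nQ_{\infty})$ (for $n$ even) or $\mathcal{L}((n-1)Q_{\infty}+Q_{\gamma})$ (for $n$ odd) with a zero of order $n$ over $x=0$, and encode that existence as the vanishing of the Hankel determinants in the $B_{i}$, respectively $C_{i}$. Your explicit ansatz $(A(x)+yB(x))/x^{n}$ and the elimination of the $A$-coefficients is just a more detailed writing-out of the paper's statement that a nontrivial combination of the listed basis elements with an order-$n$ zero exists iff the determinant vanishes, and your index bookkeeping agrees with the paper's.
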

\begin{proof}
	Denote by $Q_{\infty}$ the point of $\Curve$ \refeq{eq:billiard-cubic} corresponding to $x=\infty$ and notice that
	\begin{equation}\label{eq:2Q}
	2Q_{\gamma}\sim2 Q_{\infty}.
	\end{equation}
	
	Consider first $n$ even.
	Because of \refeq{eq:2Q}, the condition $nQ_{0}\sim nQ_{\gamma}$ is equivalent to $nQ_{0}\sim nQ_{\infty}$, which is equivalent to the existence of a meromorphic function of $\Curve$ with the unique pole at $Q_{\infty}$ and unique zero at $Q_{0}$, such that the pole and the zero are both of multiplicity $n$.
	The basis of $\mathcal{L}(nQ_{\infty})$ is:
	\begin{equation}\label{eq:basis-even}
	1,x,x^2,\dots,x^{n/2},y,xy, x^{n/2-2}y,
	\end{equation}
	thus a non-trivial linear combination of those functions with a zero of order $n$ at $x=0$ exists if and only if:
	$$
	\left|
	\begin{array}{llll}
	B_{n/2+1} & B_{n/2} & \dots & B_3\\
	B_{n/2+2} & B_{n/2+1} &\dots & B_4\\
	\dots\\
	B_{n-1} & B_n &\dots & B_{n/2+1}
	\end{array}
	\right|=0.
	$$
	
	Now, suppose $n$ is odd.
	Because of \refeq{eq:2Q}, the condition $nQ_{0}\sim nQ_{\gamma}$ is equivalent to $nQ_{0}\sim (n-1)Q_{\infty}+Q_{\gamma}$, which is equivalent to the existence of a meromorphic function of $\Curve$ with only two poles: of order $n-1$ at $Q_{\infty}$ and a simple pole at $Q_{\gamma}$, and unique zero at $Q_{0}$.
	The basis $\mathcal{L}( (n-1)Q_{\infty}+Q_{\gamma})$ is:
	\begin{equation}\label{eq:basis-odd}
	1,x,x^2,\dots,x^{(n-1)/2},\frac{y}{\gamma-x}, \frac{xy}{\gamma-x}, \dots, \frac{x^{(n-1)/2-1}y}{\gamma-x},
	\end{equation}
	thus a non-trivial linear combination of those functions with a zero of order $n$ at $x=0$ exists if and only if:
	$$
	\left|
	\begin{array}{llll}
	C_{(n-1)/2+1} & C_{(n-1)/2} & \dots & C_2\\
	C_{(n-1)/2+2} & C_{(n-1)/2+1} &\dots & C_3\\
	\dots\\
	C_{n-1} & C_n &\dots & C_{(n-1)/2+1}
	\end{array}
	\right|=0.
	$$
\end{proof}

\section{Trajectories with small periods and discriminantly separable polynomials}
\label{sec:small}

\subsection{Examples of periodic trajectories: $3\le n\le8$}\label{sec:examples-table}

\subsubsection*{3-periodic trajectories}
There is a $3$-periodic trajectory of the billiard within \refeq{eq:ellipse}, with a non-degenerate caustic $\C_{\gamma}$ in the Minkowski plane if and only if, according to Theorem \refeq{th:cayley-billiard}, the caustic is an ellipse, i.e.~$\gamma \in (-b,a)$ and  $C_2=0$.

We solve the equation
\begin{equation}\label{eqn:3-periodic}
C_2
=
\dfrac{3a^{2}b^{2}+2ab(a-b)\gamma-(a+b)^{2}\gamma^{2} }
{8(ab)^{3/2}\gamma^{5/2}}=0,
\end{equation}

which yields the following two solutions for the parameter $\gamma$ for the caustic:
\begin{equation}\label{CaleyEq1}
{\gamma}_{1}= \dfrac{ab}{(a+b)^{2}}(a-b+2\sqrt{a^{2}+ab+b^{2}}),
\quad
{\gamma}_{2}= -\dfrac{ab}{(a+b)^{2}}(-a+b+2\sqrt{a^{2}+ab+b^{2}}).
\end{equation}
Notice that both caustics $\C_{{\gamma}_{2}}$ and $\C_{{\gamma}_{1}}$ are ellipses since $-b<{\gamma}_{2}<0<{\gamma}_{1}<a$.

Two examples of a $3$-periodic trajectories are shown in Figure \ref{fig:3-periodic}.
\begin{figure}[h]
	\begin{minipage}{0.5\textwidth}
		\centering
		\begin{tikzpicture}[scale=1.2]
		
		\draw[thick](0,0) circle [x radius={sqrt(3)}, y radius={sqrt(2)}];
		
		\draw[thick,dashed,gray](0,0) circle [x radius={sqrt(3-2.332)}, y radius={sqrt(2+2.332)}];
		
		\draw[gray] ({sqrt(5)+0.5},-0.5) -- (-0.5,{sqrt(5)+0.5});
		\draw[gray] ({-sqrt(5)-0.5},0.5) -- (0.5,{-sqrt(5)-0.5});
		\draw[gray] (0.5,{sqrt(5)+0.5}) -- ({-sqrt(5)-0.5},-0.5);
		\draw[gray] (-0.5,{-sqrt(5)-0.5}) -- ({sqrt(5)+0.5},0.5);
		
		\draw[very thick, blue] (.9, -1.208) -- (.7499, 1.27) -- (1.709, -.229) -- (.9, -1.208);
		\end{tikzpicture}
	\end{minipage}
	\begin{minipage}{0.5\textwidth}
		\centering
		\begin{tikzpicture}[scale=0.8]
		
		\draw[thick](0,0) circle [x radius={sqrt(7)}, y radius={sqrt(5)}];
		
		\draw[thick,dashed,gray](0,0) circle [x radius={sqrt(7+4.589)}, y radius={sqrt(5-4.589)}];
		
		\draw[gray] ({sqrt(12)+0.5},-0.5) -- (-0.5,{sqrt(12)+0.5});
		\draw[gray] ({-sqrt(12)-0.5},0.5) -- (0.5,{-sqrt(12)-0.5});
		\draw[gray] (0.5,{sqrt(12)+0.5}) -- ({-sqrt(12)-0.5},-0.5);
		\draw[gray] (-0.5,{-sqrt(12)-0.5}) -- ({sqrt(12)+0.5},0.5);
		
		\draw[very thick, blue] (1, -2.07) -- (2.433, -.880) -- (-2.540, -.4935) -- (1, -2.07);
		\end{tikzpicture}
	\end{minipage}
	\caption{A $3$-periodic trajectory with an ellipse along the F as caustic ($a=3$, $b=2$, $\gamma\approx2.332$) is shown on the left, while another trajectory with an ellipse along the $\mathsf{x}$-axis as caustic ($a=7$, $b=5$, $\gamma\approx -4.589$) is on the right.}
	\label{fig:3-periodic}
\end{figure}

\subsubsection*{4-periodic trajectories}
There is a $4$-periodic trajectory of the billiard within \refeq{eq:ellipse}, with a non-degenerate caustic $\C_{\gamma}$ in the Minkowski plane if and only if $B_{3}=0$.
We solve the equation
\begin{equation}\label{eqn:4-periodic}
B_{3}=-\dfrac{(ab+a\gamma+b\gamma)(ab+a\gamma-b\gamma)(ab-a\gamma-b\gamma)}{16(ab\gamma)^{5/2}}=0,
\end{equation}
which yields the following solutions for the parameter $\gamma$ for the caustic
\begin{equation}\label{4pCaustic}
{\gamma}_{1}=-\dfrac{ab}{a+b},\qquad {\gamma}_{2}=-\dfrac{ab}{a-b}, \qquad {\gamma}_{3}=\dfrac{ab}{a+b}.
\end{equation}
Since $\gamma_{1}\in(-b,0)$, ${\gamma}_{3} \in (0,a)$ and ${\gamma}_{2} \notin (-b,a)$, therefore conic $\C_{{\gamma}_{2}}$ is a hyperbola whereas conics $\C_{{\gamma}_{1}}$ and $\C_{{\gamma}_{3}}$ are ellipses.

In Figures \ref{fig:4-periodic} and \ref{fig:4-periodich}, examples of a $4$-periodic trajectories with each type of caustic are shown.
\begin{figure}[h]
	\begin{minipage}{0.5\textwidth}
		\centering
	\begin{tikzpicture}[scale=1]

\draw[thick](0,0) circle [x radius={sqrt(2)}, y radius={sqrt(4)}];

\draw[thick,dashed,gray](0,0) circle [x radius={sqrt(2-1.333)}, y radius={sqrt(4+1.333)}];

\draw[gray] ({sqrt(6)+0.5},-0.5) -- (-0.5,{sqrt(6)+0.5});
\draw[gray] ({-sqrt(6)-0.5},0.5) -- (0.5,{-sqrt(6)-0.5});
\draw[gray] (0.5,{sqrt(6)+0.5}) -- ({-sqrt(6)-0.5},-0.5);
\draw[gray] (-0.5,{-sqrt(6)-0.5}) -- ({sqrt(6)+0.5},0.5);

\draw[very thick, blue] (1.1, -1.257) -- (.6064, 1.807) -- (1.099, 1.259) -- (.6069, -1.807) -- (1.1, -1.257);
\end{tikzpicture}
\end{minipage}
\begin{minipage}{0.5\textwidth}
	\centering
	\begin{tikzpicture}[scale=0.7]

\draw[thick](0,0) circle [x radius={sqrt(9)}, y radius={sqrt(3)}];

\draw[thick,dashed,gray](0,0) circle [x radius={sqrt(9+2.250)}, y radius={sqrt(3-2.250)}];

\draw[gray] ({sqrt(12)+0.5},-0.5) -- (-0.5,{sqrt(12)+0.5});
\draw[gray] ({-sqrt(12)-0.5},0.5) -- (0.5,{-sqrt(12)-0.5});
\draw[gray] (0.5,{sqrt(12)+0.5}) -- ({-sqrt(12)-0.5},-0.5);
\draw[gray] (-0.5,{-sqrt(12)-0.5}) -- ({sqrt(12)+0.5},0.5);

\draw[very thick, blue] (.5, -1.708) -- (2.902, -.4391) -- (-.4958, -1.708) -- (-2.902, -.4385) -- (.5, -1.708);
\end{tikzpicture}
\end{minipage}
\caption{
	A $4$-periodic trajectory with an ellipse along the $\mathsf{y}$-axis as caustic ($a=2$, $b=4$, $\gamma=4/3$) is shown on the left, while another trajectory with an ellipse along the $\mathsf{x}$-axis as caustic ($a=9$, $b=3$, $\gamma=-9/4$) is on the right.}
	\label{fig:4-periodic}
\end{figure}
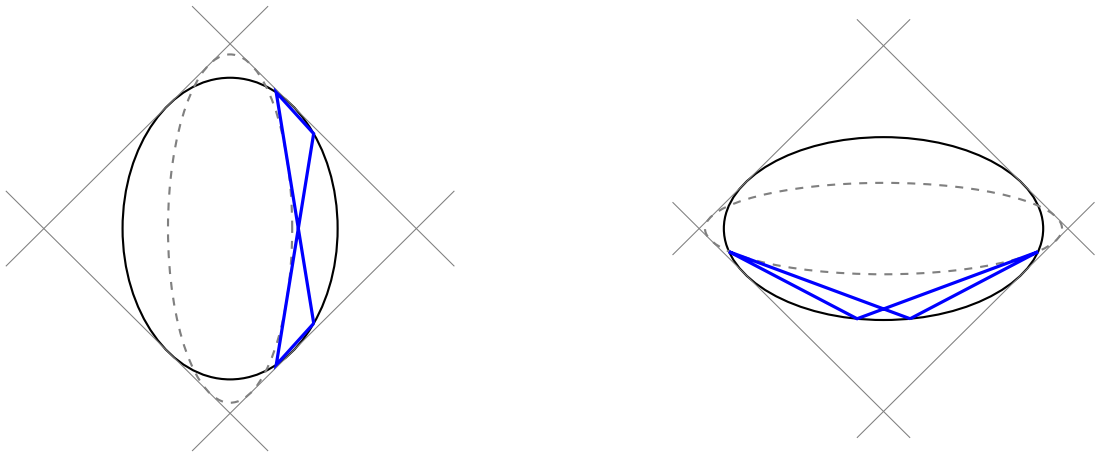

\begin{figure}[h]
	\centering
	\begin{tikzpicture}[scale=0.7]
\clip (-7,-4) rectangle (7,4);

\draw[thick](0,0) circle [x radius={sqrt(5)}, y radius={sqrt(3)}];

hyperbola lambda=-7.500
\draw[domain=-2:2,smooth,thick,dashed,variable=\t] plot ({sqrt(5+7.500)*sqrt(1-\t*\t/(3-7.500))},{\t});
\draw[domain=-2:2,smooth,thick,dashed,variable=\t] plot ({-sqrt(5+7.500)*sqrt(1-\t*\t/(3-7.500))},{\t});

\draw[gray] ({sqrt(8)+2},-2) -- (-2,{sqrt(8)+2});
\draw[gray] ({-sqrt(8)-2},2) -- (2,{-sqrt(8)-2});
\draw[gray] (2,{sqrt(8)+2}) -- ({-sqrt(8)-2},-2);
\draw[gray] (-2,{-sqrt(8)-2}) -- ({sqrt(8)+2},2);

\draw[very thick, blue] (.9, -1.586) -- (2.162, -.4427) -- (-.9015, 1.585) -- (-2.162, .4430) -- (.9, -1.586);
\end{tikzpicture}
\caption{A 4-periodic trajectory with a hyperbola along the $\mathsf{x}$-axis as caustic ($a=5$, $b=3$, $\gamma=-15/2$.}
	\label{fig:4-periodich}
\end{figure}
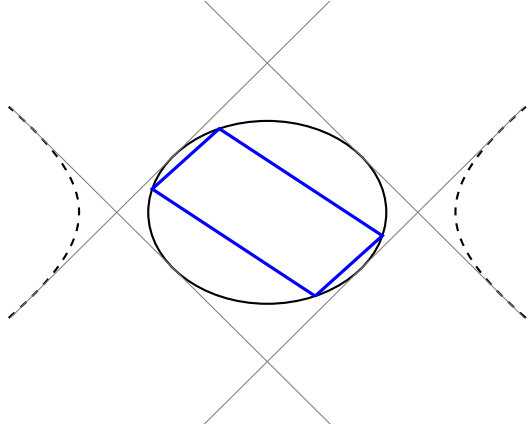

\subsubsection*{5-periodic trajectories}
There is a $5$-periodic trajectory of the billiard within \refeq{eq:ellipse}, with a non-degenerate caustic $\C_{\gamma}$ if and only if, according to Theorem \ref{th:cayley-billiard}, the caustic is an ellipse, i.e.\ $\gamma \in (-b,a)$, and $C_{2}C_{4}-C^{2}_{3}=0$, which is equivalent to:
\begin{align}
\begin{split}\label{eqn:5-periodic}
0=&  \left( a+b \right) ^{6}\gamma^{6}-2ab \left(a-b \right)  \left( a-3b \right)  \left( 3a-b \right)  \left( a+b \right)^{2}\gamma^{5}
	-a^{2}b^{2} \left( 29a^{2}-54ab+29b^{2} \right)  \left( a+b \right)^{2} \gamma^{4}
\\
&
	-36{a}^{3}{b}^{3} \left(a -b \right)  \left( a+b \right)^{2} \gamma^{3}
	-a^{4}b^{4} \left( 9a^{2}+34ab+9b^{2} \right) \gamma^{2}+10a^{5}b^{5} \left(a -b \right) \gamma+5a^{6}b^{6}.
	\end{split}
\end{align}


Examples of $5$-periodic billiard trajectories are shown in Figures \ref{fig:5-periodicy} and \ref{fig:5-periodicx}.
\begin{figure}[H]
\begin{minipage}{0.5\textwidth}
	\centering
		\begin{tikzpicture}[scale=1]

\draw[thick](0,0) circle [x radius={sqrt(5)}, y radius={sqrt(2)}];

\draw[thick,dashed,gray](0,0) circle [x radius={sqrt(5-4.7375)}, y radius={sqrt(2+4.7375)}];

\draw[gray] ({sqrt(7)+0.5},-0.5) -- (-0.5,{sqrt(7)+0.5});
\draw[gray] ({-sqrt(7)-0.5},0.5) -- (0.5,{-sqrt(7)-0.5});
\draw[gray] (0.5,{sqrt(7)+0.5}) -- ({-sqrt(7)-0.5},-0.5);
\draw[gray] (-0.5,{-sqrt(7)-0.5}) -- ({sqrt(7)+0.5},0.5);

\draw[very thick, blue] (1, -1.265) -- (.4548, 1.385) -- (.6076, -1.362) -- (1.548, 1.021) -- (2.151, .386) -- (1, -1.265);
\end{tikzpicture}
\end{minipage}
\begin{minipage}{0.5\textwidth}
	\centering
\begin{tikzpicture}[scale=1]

\draw[thick](0,0) circle [x radius={sqrt(6)}, y radius={sqrt(4)}];

\draw[thick,dashed,gray](0,0) circle [x radius={sqrt(6-1.4205)}, y radius={sqrt(4+1.4205)}];

\draw[gray] ({sqrt(10)+0.5},-0.5) -- (-0.5,{sqrt(10)+0.5});
\draw[gray] ({-sqrt(10)-0.5},0.5) -- (0.5,{-sqrt(10)-0.5});
\draw[gray] (0.5,{sqrt(10)+0.5}) -- ({-sqrt(10)-0.5},-0.5);
\draw[gray] (-0.5,{-sqrt(10)-0.5}) -- ({sqrt(10)+0.5},0.5);

\draw[very thick, blue] (2.15, -.9585) -- (2.130, 1.0) -- (1.745, 1.403) -- (2.447, -0.97e-1) -- (1.701, -1.439) -- (2.15, -.9585);

\end{tikzpicture}
\end{minipage}
\caption{$5$-periodic trajectories with an ellipse along the $\mathsf{y}$-axis as caustic.
	On the left, the particle is bouncing $4$ times off the relativistic ellipse and once off relativistic hyperbola ($a=5$, $b=2$, $\gamma \approx 4.7375$), while on the right the billiard particle is reflected twice off relativistic ellipse and $3$ times off relativistic hyperbola ($a=6$, $b=4$, $\gamma\approx1.4205$). }
	\label{fig:5-periodicy}
\end{figure}
\begin{figure}[h]
	\begin{minipage}{0.5\textwidth}
		\centering
		\begin{tikzpicture}[scale=1]

\draw[thick](0,0) circle [x radius={sqrt(6)}, y radius={sqrt(4)}];

\draw[thick,dashed,gray](0,0) circle [x radius={sqrt(6+3.9947)}, y radius={sqrt(4-3.9947)}];

\draw[gray] ({sqrt(10)+0.5},-0.5) -- (-0.5,{sqrt(10)+0.5});
\draw[gray] ({-sqrt(10)-0.5},0.5) -- (0.5,{-sqrt(10)-0.5});
\draw[gray] (0.5,{sqrt(10)+0.5}) -- ({-sqrt(10)-0.5},-0.5);
\draw[gray] (-0.5,{-sqrt(10)-0.5}) -- ({sqrt(10)+0.5},0.5);

\draw[very thick, blue] (2.3, -.6879) -- (.7881, -1.894) -- (-2.409, -.3624) -- (2.448, -0.0571) -- (-2.447, -0.09091) -- (2.3, -.6879);

\end{tikzpicture}
	\end{minipage}
	\begin{minipage}{0.5\textwidth}
		\centering
		\begin{tikzpicture}[scale=1]

\draw[thick](0,0) circle [x radius={sqrt(6)}, y radius={sqrt(4)}];

\draw[thick,dashed,gray](0,0) circle [x radius={sqrt(6+1.5413)}, y radius={sqrt(4-1.5413)}];

\draw[gray] ({sqrt(10)+0.5},-0.5) -- (-0.5,{sqrt(10)+0.5});
\draw[gray] ({-sqrt(10)-0.5},0.5) -- (0.5,{-sqrt(10)-0.5});
\draw[gray] (0.5,{sqrt(10)+0.5}) -- ({-sqrt(10)-0.5},-0.5);
\draw[gray] (-0.5,{-sqrt(10)-0.5}) -- ({sqrt(10)+0.5},0.5);

\draw[very thick, blue] (1.5, -1.581) -- (2.080, -1.056) -- (0.04168, -2.000) -- (-2.071, -1.068) -- (-1.545, -1.552) -- (1.5, -1.581);

\end{tikzpicture}
	\end{minipage}
	\caption{$5$-periodic trajectories with an ellipse along the $\mathsf{x}$-axis as caustic.
		On the left, the particle is bouncing once off the relativistic ellipse and $4$ times off relativistic hyperbola ($a=6$, $b=4$, $\gamma \approx-3.9947$), while on the right the billiard particle is reflected twice off relativistic hyperbola and $3$ times off relativistic ellipse ($a=6$, $b=4$, $\gamma\approx-1.5413$). }
	\label{fig:5-periodicx}
\end{figure}
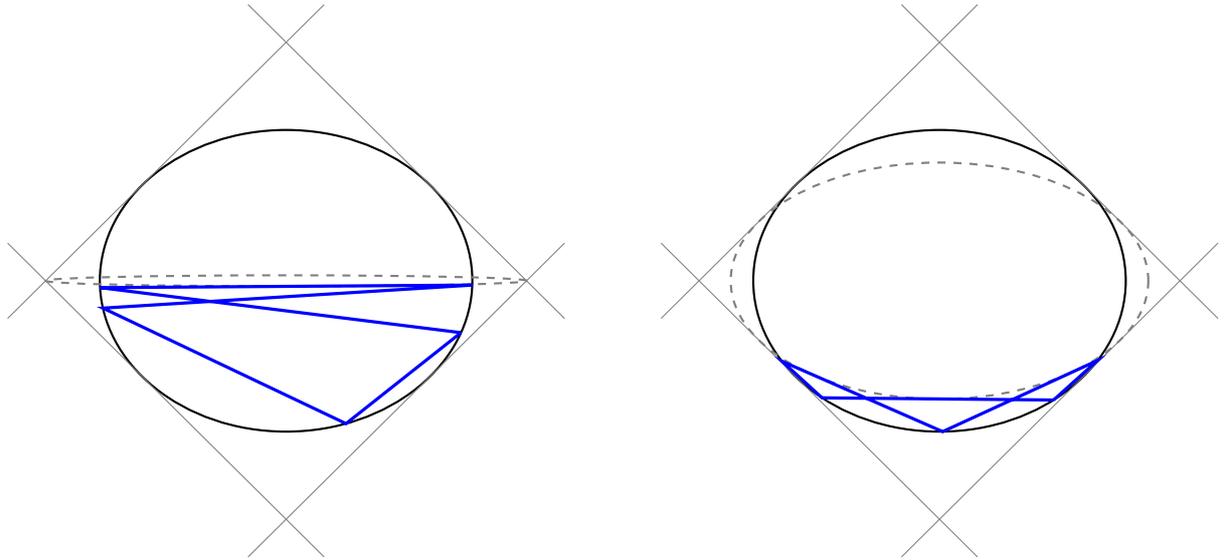

\subsubsection*{6-periodic trajectories}

There is a $6$-periodic trajectory of the billiard within \refeq{eq:ellipse}, with a non-degenerate caustic $\C_{\gamma}$ if and only if $B_{3}B_{5}-B^{2}_{4}=0$, which is equivalent to:
\begin{align}
\begin{split}\label{eqn:6-periodic}
0=&\Big({-}(a+b)^{2}\gamma^{2}+2ab ( a-b) \gamma+3a^{2}b^{2}\Big)
\Big( ( a+b)( a-3b) \gamma^{2}+2ab ( a+b) \gamma+a^{2}b^{2}\Big)
\\
&\times
\Big(  ( a+b)^{2}\gamma^{2}+2ab ( a-b) \gamma + a^{2}b^{2}\Big)
\Big ( -( a+b)( 3a-b) \gamma^{2}-2ab( a+b) \gamma+a^{2}b^{2}\Big)
.
\end{split}
\end{align}
The first factor, $-(a+b)^{2}\gamma^{2}+2ab ( a-b)\gamma+3a^2b^2$, is a constant multiple of $C_2$ (see Equation \refeq{eqn:3-periodic}), thus
it produces $3$-periodic trajectories, which have already been studied.

The discriminant of the third factor $( a+b)^{2}\gamma^{2}+2ab ( a-b) \gamma + a^{2}b^{2}$ is $-16a^3b^3$, which is negative, therefore the expression has no real roots in $\gamma$.

Next, we consider the second factor:
$
( a+b)( a-3b) \gamma^{2}+2ab ( a+b) \gamma+a^{2}b^{2}=0,
$
which has two real solutions:
\begin{equation*}
\gamma= \frac{ab}{(a+b)(a-3b)}\Big(-a-b\pm 2\sqrt{ab+b^{2}}\Big)
\end{equation*}

Finally we consider the fourth factor:
$
-( a+b)( 3a-b) \gamma^{2}-2ab( a+b) \gamma+a^{2}b^{2}=0,
$
which yields two real solutions:
\begin{equation*}
\gamma= \frac{ab}{(a+b)(3a-b)}\Big(-a-b\pm 2\sqrt{ab+a^{2}}\Big).
\end{equation*}

Examples of $6$-periodic trajectories with hyperbolas as caustics are shown in Figure \ref{fig:6-periodich}.
\begin{figure}[H]
	\begin{minipage}{0.5\textwidth}
		\centering
		\begin{tikzpicture}[scale=.8]
		
		\draw[thick](0,0) circle [x radius={sqrt(5)}, y radius={sqrt(3)}];
		
		hyperbola lambda=-3.2264
		\draw[domain=-.8:.8,smooth,thick,dashed,variable=\t] plot ({sqrt(5+3.2264)*sqrt(1-\t*\t/(3-3.2264))},{\t});
		\draw[domain=-.8:.8,smooth,thick,dashed,variable=\t] plot ({-sqrt(5+3.2264)*sqrt(1-\t*\t/(3-3.2264))},{\t});
		
		\draw[gray] ({sqrt(8)+2},-2) -- (-2,{sqrt(8)+2});
		\draw[gray] ({-sqrt(8)-2},2) -- (2,{-sqrt(8)-2});
		\draw[gray] (2,{sqrt(8)+2}) -- ({-sqrt(8)-2},-2);
		\draw[gray] (-2,{-sqrt(8)-2}) -- ({sqrt(8)+2},2);
		
		\draw[very thick, blue] (2, -0.7746) -- (1.366, -1.372) -- (-2.232, -.1051) -- (2.001, .7732) --(1.364, 1.372)--(-2.232, .1055)-- (2, -0.7746);
		\end{tikzpicture}
	\end{minipage}
	\begin{minipage}{0.5\textwidth}
		\centering
		\begin{tikzpicture}[scale=.8]
		
		\draw[thick](0,0) circle [x radius={sqrt(3)}, y radius={sqrt(7)}];
		
		hyperbola lambda=3.1189
		\draw[domain=-.5:.5,smooth,thick,dashed,variable=\t] plot ({\t},{sqrt(7+3.1189)*sqrt(1-\t*\t/(3-3.1189)});
		\draw[domain=-.5:.5,smooth,thick,dashed,variable=\t] plot ({\t},{-sqrt(7+3.1189)*sqrt(1-\t*\t/(3-3.1189)});
		
		\draw[gray] ({sqrt(10)+2},-2) -- (-2,{sqrt(10)+2});
		\draw[gray] ({-sqrt(10)-2},2) -- (2,{-sqrt(10)-2});
		\draw[gray] (2,{sqrt(10)+2}) -- ({-sqrt(10)-2},-2);
		\draw[gray] (-2,{-sqrt(10)-2}) -- ({sqrt(10)+2},2);
		
		\draw[very thick, blue] (.4, -2.574) -- (-.1910, 2.630) -- (-1.653, -.7911) -- (-.3996, -2.574)--(.1911, 2.629)--(1.652, -.7943)--(.4, -2.574);
		
		\end{tikzpicture}
	\end{minipage}
	\caption{A 6-periodic trajectory with a hyperbola along the $\mathsf{x}$-axis as caustic ($a=5$, $b=3$, $\gamma\approx -3.2264$ is shown on the left, while another trajectory with a hyperbola along the $\mathsf{y}$-axis  as caustic ($a=3$, $b=7$ and $\gamma\approx3.1189$) is on the right. On the left, the particle bounces off the relativistic ellipse twice and 4 times the relativistic hyperbola while on the right the particle bounces off the relativistic ellipse 4 times and the relativistic hyperbola twice.}
	\label{fig:6-periodich}
\end{figure}

\subsubsection*{7-periodic trajectories}

According to Theorem \ref{th:cayley-billiard}, there is a $7$-periodic trajectory of the billiard within \refeq{eq:ellipse}, with a non-degenerate caustic $\C_{\gamma}$ if and only if the caustic is an ellipse, i.e.\ $\gamma \in (-b,a)$, and
	 \begin{equation*}
	 \left|
	 \begin{array}{ccc}
	 C_2 & C_3 & C_4
	 \\
	 C_3 & C_4 & C_5
	 \\
	 C_4 & C_5 & C_6
	 \end{array}
	 \right|=0,
	 \end{equation*}
which is equivalent to:
\begin{align}
\begin{split}\label{eqn:7-periodic}
0
=&
 - \left( a+b \right) ^{12}\gamma^{12}+4ab \left( a-b \right)  \left( a-3b \right)  \left( 3a-b \right)  \left( {a}^{2}-6ab+{b}^{2} \right)\left( a+b \right) ^{6}{\gamma}^{11}
  \\
&
+2{a}^{2}{b}^{2} \left( 59{a}^{4}-332{a}^{3}b+626{a}^{2}{b}^{2}-332a{b}^{3}+59{b}^{4} \right)  \left( a+b \right) ^{6}{\gamma}^{10}\\
&+28{a}^{3}{b}^{3} \left( a-b \right)  \left( 13{a}^{2}-38ab+13{b}^{2} \right)  \left( a+b \right)^{6}{\gamma}^{9}\\
&
+{a}^{4}{b}^{4} \left( 7{a}^{2}+30ab+7{b}^{2} \right)  \left( 63{a}^{4}-84{a}^{3}b-38{a}^{2}{b}^{2}-84a{b}^{3}+63{b}^{4} \right)\left( a+b \right) ^{2}{\gamma}^{8}
\\
&
 -8{a}^{5}{b}^{5} \left( a-b \right)  \left( 21{a}^{4}-420{a}^{3}b-50{a}^{2}{b}^{2}-420a{b}^{3}+21{b}^{4} \right)  \left( a+b \right) ^{2}{\gamma}^{7}\\
 &-12{a}^{6}{b}^{6} \left( 105{a}^{4}-420{a}^{3}b+422{a}^{2}{b}^{2}-420a{b}^{3}+105{b}^{4} \right)  \left( a+b \right) ^{2}{\gamma}^{6}\\
 &
-24{a}^{7}{b}^{7} \left( a-b \right)  \left( 75{a}^{2}-106ab+75{b}^{2} \right)  \left( a+b \right) ^{2}{\gamma}^{5}
\\
&
-3{a}^{8}{b}^{8} \left( 437{a}^{2}-726ab+437{b}^{2} \right)  \left( a+b \right)^{2}{\gamma}^{4}
-4{a}^{9}{b}^{9} \left( a-b \right)  \left( 121{a}^{2}+250ab+121{b}^{2} \right) {\gamma}^{3}
\\&
-14{a}^{10}{b}^{10} \left( 3{a}^{2}+14ab+3{b}^{2} \right) {\gamma}^{2}
+28{a}^{11}{b}^{11} \left( a-b \right) \gamma+7{a}^{12}{b}^{12}.
\end{split}
\end{align}

Examples of $7$-periodic trajectories are shown in Figure \ref{fig:7-periodic}.

\begin{figure}[H]
	\begin{minipage}{0.5\textwidth}
		\centering
		\begin{tikzpicture}[scale=1]
		
	\draw[thick](0,0) circle [x radius={sqrt(3)}, y radius={sqrt(7)}];
	
	\draw[thick,dashed,gray](0,0) circle [x radius={sqrt(3+6.9712)}, y radius={sqrt(7-6.9712)}];
	
	\draw[gray] ({sqrt(10)+0.5},-0.5) -- (-0.5,{sqrt(10)+0.5});
	\draw[gray] ({-sqrt(10)-0.5},0.5) -- (0.5,{-sqrt(10)-0.5});
	\draw[gray] (0.5,{sqrt(10)+0.5}) -- ({-sqrt(10)-0.5},-0.5);
	\draw[gray] (-0.5,{-sqrt(10)-0.5}) -- ({sqrt(10)+0.5},0.5);
	
	\draw[very thick, blue] (0, -2.64575) -- (1.45903, -1.42579) -- (-1.70584, -.458533) -- (1.72848, -.169774) -- (-1.72849, -.169638)--(1.70591, -.457905) --(-1.45976, -1.42403)-- (0, -2.64575);
	
		\end{tikzpicture}
	\end{minipage}
	\begin{minipage}{0.5\textwidth}
		\centering
		\begin{tikzpicture}[scale=1]
		
		\draw[thick](0,0) circle [x radius={sqrt(7)}, y radius={sqrt(3)}];
	
	\draw[thick,dashed,gray](0,0) circle [x radius={sqrt(7-6.9712)}, y radius={sqrt(3+6.9712)}];
	
	\draw[gray] ({sqrt(10)+0.5},-0.5) -- (-0.5,{sqrt(10)+0.5});
	\draw[gray] ({-sqrt(10)-0.5},0.5) -- (0.5,{-sqrt(10)-0.5});
	\draw[gray] (0.5,{sqrt(10)+0.5}) -- ({-sqrt(10)-0.5},-0.5);
	\draw[gray] (-0.5,{-sqrt(10)-0.5}) -- ({sqrt(10)+0.5},0.5);
	
	\draw[very thick, blue] (1.5, -1.427) -- (.4857, 1.703) -- (.1761, -1.728) -- (.1651, 1.73) -- (.4356, -1.708)--(1.360, 1.484)--(2.641, -0.099)-- (1.5, -1.427);
		\end{tikzpicture}
	\end{minipage}
	\caption{A 7-periodic trajectory with an ellipse along the $\mathsf{x}$-axis as caustic ($a=3$, $b=7$, $\gamma \approx -6.9712$) is shown on the left, while another trajectory with an ellipse along the $\mathsf{y}$-axis  as caustic ($a=7$, $b=3$ and $\gamma \approx6.9712$) is on the right. On the left, the particle bounces once off the relativistic ellipse  and 6 times off the relativistic hyperbola while on the right the particle bounces 6 times off the relativistic ellipse and once off the relativistic hyperbola.}
	\label{fig:7-periodic}
\end{figure}

\subsubsection*{8-periodic trajectories}
There is an $8$-periodic trajectory of the billiard within ellipse \refeq{eq:ellipse}, with a non-degenerate caustic $\C_{\gamma}$ if and only if
\begin{equation*}
\left|
\begin{array}{ccc}
B_3 & B_4 & B_5
\\
B_4 & B_5 & B_6
\\
B_5 & B_6 & B_7
\end{array}
\right|=0,
\end{equation*}
which is equivalent to:
\begin{align}
\begin{split}\label{eqn:8-periodic}
0=
& \left( ab-a\gamma-b\gamma \right)  \left( ab+a\gamma+b\gamma \right)  \left( ab+a\gamma-b\gamma \right)
\\
&
 \Big(  \left( a+b \right)^{4}{\gamma}^{4}-4ab \left( a+b \right)  \left( -b+a \right) ^{2}{\gamma}^{3}-2{a}^{2}{b}^{2} \left( a+b \right)  \left( 5a-3b \right) {\gamma}^{2}-4{a}^{3}{b}^{3} \left( a+b \right) \gamma\\
&
+{a}^{4}{b}^{4} \Big)  \Big(  \left( a+b \right) ^{4}{\gamma}^{4}+4ab \left( a+b \right)  \left( -b+a \right) ^{2}{\gamma}^{3}+2{a}^{2}{b}^{2} \left( a+b \right)  \left( 3a-5b \right) {\gamma}^{2}\\
&
+4{a}^{3}{b}^{3} \left( a+b \right) \gamma
+{a}^{4}{b}^{4} \Big)
\Big(  \left( {a}^{2}-6ab+{b}^{2} \right)  \left( a+b \right) ^{2}{\lambda}^{4}+4ab \left( -b+a \right)  \left( a+b \right) ^{2}{\gamma}^{3}\\
&
+2{a}^{2}{b}^{2} \left( 3{a}^{2}+2ab+3{b}^{2} \right) {\gamma}^{2}
+4{a}^{3}{b}^{3} \left( -b+a \right) \gamma+{a}^{4}{b}^{4} \Big).
\end{split}
\end{align}

In Figures \ref{fig:8-periodich} and \ref{fig:8-periodice}, three examples of $8$-periodic trajectories are shown.
\begin{figure}[H]
\begin{minipage}{0.5\textwidth}
	\centering
	\begin{tikzpicture}[scale=0.7]

\draw[thick](0,0) circle [x radius={sqrt(6)}, y radius={sqrt(3)}];

hyperbola lambda=-3.0151,
\draw[domain=-.2:.2,smooth,thick,dashed,variable=\t] plot ({sqrt(6+3.0151)*sqrt(1-\t*\t/(3-3.0151))},{\t});
\draw[domain=-.2:.2,smooth,thick,dashed,variable=\t] plot ({-sqrt(6+3.0151)*sqrt(1-\t*\t/(3-3.0151))},{\t});

\draw[gray] ({sqrt(9)+2},-2) -- (-2,{sqrt(9)+2});
\draw[gray] ({-sqrt(9)-2},2) -- (2,{-sqrt(9)-2});
\draw[gray] (2,{sqrt(9)+2}) -- ({-sqrt(9)-2},-2);
\draw[gray] (-2,{-sqrt(9)-2}) -- ({sqrt(9)+2},2);

\draw[very thick, blue] (1.5, -1.369) -- (2.250, -.685) -- (-2.448, -0.0550) -- (2.439, .1630)--(-1.510, 1.364)--(-2.247, .689)--(2.448, 0.0557)--(-2.439, -.1615) -- (1.5, -1.369);
\end{tikzpicture}
\end{minipage}
\begin{minipage}{0.5\textwidth}
	\centering
	\begin{tikzpicture}[scale=0.7]

\draw[thick](0,0) circle [x radius={sqrt(6)}, y radius={sqrt(3)}];

hyperbola lambda=6.9168
\draw[domain=-1.2:1.2,smooth,thick,dashed,variable=\t] plot ({\t},{sqrt(3+6.9168)*sqrt(1-\t*\t/(6-6.9168)});
\draw[domain=-1.2:1.2,smooth,thick,dashed,variable=\t] plot ({\t},{-sqrt(3+6.9168)*sqrt(1-\t*\t/(6-6.9168)});

\draw[gray] ({sqrt(9)+2},-2) -- (-2,{sqrt(9)+2});
\draw[gray] ({-sqrt(9)-2},2) -- (2,{-sqrt(9)-2});
\draw[gray] (2,{sqrt(9)+2}) -- ({-sqrt(9)-2},-2);
\draw[gray] (-2,{-sqrt(9)-2}) -- ({sqrt(9)+2},2);

\draw[very thick, blue] (1.5, -1.369) -- (.1935, 1.727) -- (-.8948, -1.612) -- (-2.370, .437)--(-1.501, 1.369)--(-.1936, -1.727)--(.8948, 1.612)--(2.370, -.437)-- (1.5, -1.369);

\end{tikzpicture}

\end{minipage}
	\caption{On the left, an $8$-periodic trajectory with a hyperbola along $\mathsf{x}$-axis as caustic ($a=6$, $b=3$, $\gamma\approx-3.0151$), with $2$ vertices on relativistic ellipses and $6$ on relativistic hyperbolas.
On the right, an $8$-periodic trajectory with a hyperbola along $\mathsf{y}$-axis as caustic ($a=6$, $b=3$, $\gamma\approx6.9168$), with $6$ vertices on relativistic ellipses and $2$ on relativistic hyperbolas.	
}
	\label{fig:8-periodich}
\end{figure}
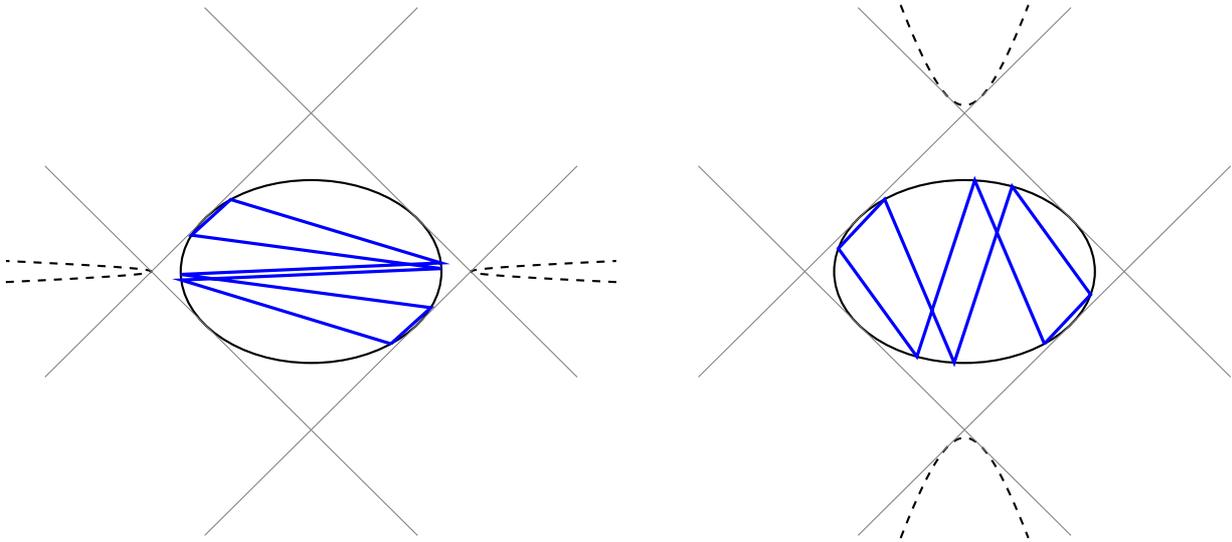
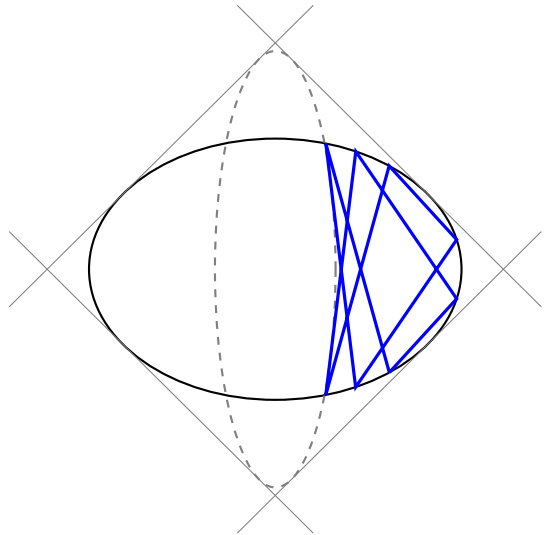
\begin{figure}[H]
	\centering
		\begin{tikzpicture}[scale=1]

\draw[thick](0,0) circle [x radius={sqrt(6)}, y radius={sqrt(3)}];

\draw[thick,dashed,gray](0,0) circle [x radius={sqrt(6-5.3707)}, y radius={sqrt(3+5.3707)}];

\draw[gray] ({sqrt(9)+0.5},-0.5) -- (-0.5,{sqrt(9)+0.5});
\draw[gray] ({-sqrt(9)-0.5},0.5) -- (0.5,{-sqrt(9)-0.5});
\draw[gray] (0.5,{sqrt(9)+0.5}) -- ({-sqrt(9)-0.5},-0.5);
\draw[gray] (-0.5,{-sqrt(9)-0.5}) -- ({sqrt(9)+0.5},0.5);

\draw[very thick, blue] (1.5, -1.369) -- (.6650, 1.667) -- (1.057, -1.564) -- (2.386, .392) -- (1.498, 1.370)--(.6646, -1.667) --(1.059, 1.561)--(2.386, -.392)-- (1.5, -1.369);

\end{tikzpicture}
	\caption{An $8$-periodic trajectory with an ellipse along $\mathsf{y}$-axis as caustic ($a=6$, $b=3$, $\gamma\approx5.3707$).
	There are $2$ reflections off relativistic hyperbola and $6$ off relativistic ellipses. }
	\label{fig:8-periodice}
\end{figure}

\subsubsection*{Summary of numbers of touching points with relativistic ellipses and hyperbolas}\label{sec:table}
In the table below, we summarise the examples given in this section.
Here, $n_1$ and $n_2$ represent the numbers of bouncing points off relativistic ellipses and relativistic hyperbolas respectively.
\begin{longtable}[h]{ |c|c|c|c| }
	\hline
	Period $n_1+n_2$ & Caustic & $n_{1}$ & $n_{2}$  \\
	\hline
	\endhead
	\hline
	\endfoot
		\hline
	$n=3$&	{Ellipse along $\mathsf{y}$-axis} & 2 & 1 \\
	 &	{Ellipse along $\mathsf{x}$-axis} & 1 & 2 \\
		\hline
			$n=4$&	{Ellipse along x-axis} & 2 & 2 \\
		&	{Ellipse along y-axis} & 2 & 2 \\
			&	{Hyperbola along x-axis} & 2 & 2 \\
		\hline
			$n=5$&	{Ellipse along y-axis} & 2 & 3 \\
		&	{Ellipse along x-axis} & 3 & 2 \\
			&	{Ellipse along y-axis} & 4 & 1 \\
		&	{Ellipse along x-axis} & 1 & 4 \\
		\hline
			$n=6$&	{Hyperbola along x-axis} & 2 & 4 \\
			& {Ellipse along y-axis} & 4 & 2 \\
		\hline
			$n=7$&	{Ellipse along x-axis} & 1 & 6 \\
		&	{Ellipse along y-axis} & 6 & 1 \\
		\hline
		$n=8$&	{Hyperbola along x-axis} & 2 & 6 \\
		&	{Hyperbola along y-axis} & 6 & 2 \\
			&	{Ellipse along x-axis} & 6 & 2 \\
		\hline
\end{longtable}

\subsection{Cayley-type conditions and discriminantly separable polynomials}

Similarly to the case of Euclidean plane \cite{DragRadn2019rcd}, the Cayley-type conditions obtained above have a very interesting algebraic structure.
Namely, the numerators of the corresponding expressions are polynomials in $3$ variables.
As examples below show, those polynomials have factorizable discriminants which, after a change of varibles, lead to discriminantly separable polynomials in the sense of the following definition.

\begin{definition}[\cite{Drag2010}]
	A polynomial $F(x_{1},\dots, x_{n})$ is \emph{discriminantly separable}
	if there exist polynomials $f_{1}(x_1),\dots , f_{n}(x_n)$ such that the discriminant $\mathcal{D}_{x_i}F$
	of F with respect to $x_{i}$ satisfies:
	
	$$
	\mathcal{D}_{x_i}F(x_{1},\dots,\hat{x}_i,\dots, x_{n})= \prod_{j\neq i}^{}f_{j}(x_{j}),
	$$
	for each $i=1,\dots,n$.
\end{definition}

Discriminantly factorizable polynomials were introduced in \cite{Drag2012} in connection with $n$-valued groups.
Various applications of discriminantly separable polynomials in continuous and discrete integrable systems were presented in \cites{DragKuk2014jgm,DragKuk2014rcd,DragKuk2017}.
The connection between Cayley-type conditions in the Euclidean setting and discriminantly factorizable and separable polynomials has been observed in \cite{DragRadn2019rcd}.
As examples below show, the Cayley conditions in the Minkowski plane provide examples of discriminantly factorisable polynomials which, after a change of variables, have separable discriminants.
It would be interesting to establish this relationship as a general statement.

\begin{example}\label{ex:G2}
The expression \refeq{eqn:3-periodic} is: 	
\begin{equation*}
\mathsf{G}_{2}(\gamma,a,b)= - \left( a+b \right) ^{2}{\gamma}^{2}+2\,ab \left( a-b \right) \gamma+3\,{a}^{2}{b}^{2},
\end{equation*}
and its discriminant with respect to $\gamma$:
$$
\mathcal{D}_{\gamma}\mathsf{G}_{2}=2^4 \left( {a}^{2}+ab+{b}^{2} \right) {a}^{2}{b}^{2},
$$
which is obviously factorizable.
\end{example}

\begin{example}
The expression \refeq{eqn:4-periodic} is:
$$
\mathsf{G}_{3}(\gamma,a,b)
=-(ab+a\gamma+b\gamma)(ab+a\gamma-b\gamma)(ab-a\gamma-b\gamma)
$$
and its discriminant with respect to $\gamma$ is factored as:
	$$
	\mathcal{D}_{\gamma}\mathsf{G}_{3}=2^6{a}^{8}{b}^{8} \left( a+b \right) ^{2}.
	$$
\end{example}

\begin{example}
The expression \refeq{eqn:5-periodic} is:
\begin{align*}
	\mathsf{G}_{6}(\gamma,a,b)=&
	  \left( a+b \right) ^{6}\gamma^{6}-2ab \left( a-b \right)  \left( a-3b \right)  \left( 3a-b \right)  \left( a+b \right)^{2}\gamma^{5}\\
&
-a^{2}b^{2} \left( 29a^{2}-54ab+29b^{2} \right)  \left( a+b \right)^{2} \gamma^{4}-36{a}^{3}{b}^{3} \left(a -b \right)  \left( a+b \right)^{2} \gamma^{3}\\
&
-a^{4}b^{4} \left( 9a^{2}+34ab+9b^{2} \right) \gamma^{2}+10a^{5}b^{5} \left(a -b \right) \gamma+5a^{6}b^{6}.
\end{align*}
is discriminantly factorizable since its discriminant with respect to $\gamma$ is:
	$$
		\mathcal{D}_{\gamma}\mathsf{G}_{6}=-5\cdot 2^{44} ( 27{a}^{6}+81{a}^{5}b+322\,{a}^{4}{b}^{2}+509{a}^{3}{b}^{3}+322{a}^{2}{b}^{4}
	+81a{b}^{5}+27{b}^{6} )  \left( a+b \right) ^{8}{b}^{38}{a}^{38}.
	$$
\end{example}

\begin{example}
Let us denote the expression \refeq{eqn:6-periodic} as:
	\begin{align*}
	\mathsf{G}_{8}(\gamma,a,b)=&( 3a-b )  ( a-3b )  ( a+b ) ^{6}{\gamma}^{8}
	+8ab ( a-b )  ( a+b ) ^{6}{\gamma}^{7}
	\\&
	-4\,{a}^{2}{b}^{2} ( 3\,{a}^{4}-24\,{a}^{3}b+10\,{a}^{2}{b}^{2}
	-24\,a{b}^{3}+3\,{b}^{4} )  \left( a+b \right) ^{2}
	{\gamma}^{6}
	\\&
	-8\,{a}^{3}{b}^{3} \left( a-b \right)  \left( 9\,{a}^{2}-14\,ab+9\,{b}^{2} \right)  \left( a+b \right) ^{2}{\gamma}^{5}\\
	&
	-10\,{a}^{4}{b}^{4} \left( 11\,{a}^{2}-18\,ab+11\,{b}^{2} \right)  \left( a+b \right) ^{2}{\gamma}^{4}-72\,{a}^{5}{b}^{5} \left( a-b \right)  \left( a+b \right) ^{2}{\gamma}^{3}\\
	&
	-4\,{a}^{6}{b}^{6} \left( a+3\,b \right)  \left( 3\,a+b \right) {\gamma}^{2}+8\,{a}^{7}{b}^{7} ( a-b ) \gamma+3\,{a}^{8}{b}^{8}
	\end{align*}
We find that the discriminant of $\mathsf{G}_{8}$ with respect to $\gamma$ factors as:
	\begin{equation*}
	\mathcal{D}_{\gamma}\mathsf{G}_{8}=-2^{88}\, \left( {a}^{2}+ab+{b}^{2} \right)  \left( a+b \right) ^{18}\\
	\mbox{}{b}^{74}{a}^{74}.
	\end{equation*}
\end{example}
	\begin{example}
		The discriminant $\mathcal{D}_{\gamma}\mathsf{G}_{12}$ of the expression in \refeq{eqn:7-periodic} is:
		\begin{align*}
	\mathcal{D}_{\gamma}\mathsf{G}_{12}=&-2^{184}\cdot7^{2}
	\left( a+b \right) ^{40}{(ab)}^{172}\times
	\\&\times
		 ( 84375{a}^{12}+506250{a}^{11}b
		+4266243\,{a}^{10}{b}^{2}+16690590\,{a}^{9}{b}^{3}+34989622\,{a}^{8}{b}^{4}
		\\&\quad
		+45383698\,{a}^{7}{b}^{5}+46564971\,{a}^{6}{b}^{6}+
		45383698\,{a}^{5}{b}^{7}+34989622\,{a}^{4}{b}^{8}+16690590\,{a}^{3}{b}^{9}
		\\&\quad
		+4266243\,{a}^{2}{b}^{10}+506250\,a{b}^{11}
		+84375\,{b}^{12} )  ,
		\end{align*}
thus $\mathsf{G}_{12}$ is a discriminantly factorizable polynomial.
	\end{example}

\begin{example}\label{ex:G15}
The discriminant $\mathcal{D}_{\gamma}\mathsf{G}_{15}$ of the expression  \refeq{eqn:8-periodic} is:
	\begin{align*}
	\mathcal{D}_{\gamma}\mathsf{G}_{15}=&-2^{246}
	(ab)^{278}\left( 27\,{a}^{2}+46\,ab+27\,{b}^{2} \right)  \left( a+b \right) ^{8}
	\times\\&\times	
	\left( {a}^{5}+5\,{a}^{4}b+10\,{a}^{3}{b}^{2}+10\,{a}^{2}{b}^{3}+5\,a{b}^{4}+{b}^{5} \right)
	\times\\&\times	
	 ( {a}^{7}+7\,{a}^{6}b+21\,{a}^{5}{b}^{2}+35\,{a}^{4}{b}^{3}+35\,{a}^{3}{b}^{4}
	+21\,{a}^{2}{b}^{5}+7\,a{b}^{6}+{b}^{7} )
	 \times\\&\times	
	 ( 8\,{a}^{26}+27\,{b}^{26}+200\,{a}^{25}b+2427\,{a}^{24}{b}^{2}+19048\,{a}^{23}{b}^{3}+108652\,{a}^{22}{b}^{4}
	+479688\,{a}^{21}{b}^{5}
	\\&\quad
	+1703702\,{a}^{20}{b}^{6}+4993208\,{a}^{19}{b}^{7}+12286692\,{a}^{18}{b}^{8}+25688608\,{a}^{17}{b}^{9}
	+46007797\,{a}^{16}{b}^{10}
	\\&\quad
	+70961808\,{a}^{15}{b}^{11}+94556312\,{a}^{14}{b}^{12}+108998288\,{a}^{13}{b}^{13}
	+108671412\,{a}^{12}{b}^{14}\\
	&\quad
	+93545968\,{a}^{11}{b}^{15}+69297712\,{a}^{10}{b}^{16}+43955208\,{a}^{9}{b}^{17}
	+23703317\,{a}^{8}{b}^{18}+10761608\,{a}^{7}{b}^{19}\\
	&\quad
	+4059132\,{a}^{6}{b}^{20}+1248808\,{a}^{5}{b}^{21}
	+305302\,{a}^{4}{b}^{22}+57048\,{a}^{3}{b}^{23}+7652\,{a}^{2}{b}^{24}+656\,a{b}^{25}
	 )
	\\&\times
	 ( 27\,{a}^{26}+8\,{b}^{26}+656\,{a}^{25}b+7652\,{a}^{24}{b}^{2}+57048\,{a}^{23}{b}^{3}+305302\,{a}^{22}{b}^{4}
	 +1248808\,{a}^{21}{b}^{5}
	 \\&\quad
	+4059132\,{a}^{20}{b}^{6}+10761608\,{a}^{19}{b}^{7}+23703317\,{a}^{18}{b}^{8}+43955208\,{a}^{17}{b}^{9}
	+69297712\,{a}^{16}{b}^{10}
	\\&\quad
	+93545968\,{a}^{15}{b}^{11}+108671412\,{a}^{14}{b}^{12}+108998288\,{a}^{13}{b}^{13}
	+94556312\,{a}^{12}{b}^{14}\\
	&\quad
	+70961808\,{a}^{11}{b}^{15}+46007797\,{a}^{10}{b}^{16}+25688608\,{a}^{9}{b}^{17}
	+12286692\,{a}^{8}{b}^{18}+4993208\,{a}^{7}{b}^{19}\\
	&\quad
	+1703702\,{a}^{6}{b}^{20}+479688\,{a}^{5}{b}^{21}+108652\,{a}^{4}{b}^{22}
	+19048\,{a}^{3}{b}^{23}+2427\,{a}^{2}{b}^{24}+200\,a{b}^{25}
	 )   ,
	\end{align*}
so $\mathsf{G}_{15}$ is a discriminantly factorizable polynomial.	
\end{example}

\begin{remark}
Since the determinants obtained in Theorem \ref{th:cayley-billiard} are symmetric in $a$, $-b$, and $\gamma$, the discriminants with respect to $a$ and $b$ of the polynomials in Examples \ref{ex:G2}--\ref{ex:G15} will be also factorizable.
\end{remark}

\begin{remark}
	We observed in the Examples \ref{ex:G2}--\ref{ex:G15} that all polynomials are discriminantly factorizable. However, it is important to note that their factors are homogeneous, thus, by a change of variables $(a,b)\mapsto (a,\hat{b})$, with $\hat{b}=\dfrac{b}{a}$, transforms the polynomials into discriminantly separable polynomials in new variables $(a, \hat{b})$:
\begin{gather*}
\mathcal{D}_{\gamma}\mathsf{G}_{2}=2^4\,a^{8} \hat{b}^{2}\left( 1+\hat{b}+\hat{b}^{2} \right),
\\	
\mathcal{D}_{\gamma}\mathsf{G}_{3}=2^6\,{a}^{18}{\hat{b}}^{8} \left( 1+\hat{b} \right) ^{2},
\\
	\mathcal{D}_{\gamma}\mathsf{G}_{6}=-52^{44}a^{90} \hat{b}^{38}( 27+81\hat{b}+322\hat{b}^{2}+509\hat{b}^{3}+322\hat{b}^{4}
	+81\hat{b}^{5}+27{b}^{6} )  \left( a+b \right) ^{8},
\\
	\mathcal{D}_{\gamma}\mathsf{G}_{8}=-2^{88}\, a^{168} \hat{b}^{74}\left( 1+\hat{b}+\hat{b}^{2} \right)  \left( 1+\hat{b} \right) ^{18},
\end{gather*}
\end{remark}

\section{Elliptic periodic trajectories}\label{sec:elliptic}

Points of the plane which are symmetric with respect to the coordinate axes share the same elliptic coordinates, thus there is no bijection between the elliptic and the Cartesian coordinates. Thus, we introduce a separate notion of periodicity in elliptic coordinates.

\begin{definition}
A billiard trajectory is \emph{$n$-elliptic periodic} if it is $n$-periodic in elliptic coordinates joined to the confocal family \refeq{eq:confocal.conics}.
\end{definition}

Now, we will derive algebro-geometric conditions for elliptic periodic trajectories.

\begin{theorem}\label{th:elliptic-periodic}
A billiard trajectory within $\E$ with the caustic $\C_{\gamma}$ is $n$-elliptic periodic without being $n$-periodic if and only if one of the following conditions is satisfied on $\Curve$:
	\begin{itemize}
		\item[(a)] $\C_{\gamma}$ is an ellipse,  $0<\gamma<a$, and $nQ_{0}-(n-1)Q_{\gamma}-Q_{-b}\sim0$;
		\item[(b)] $\C_{\gamma}$ is an ellipse,  $-b<\gamma<0$, and $nQ_{0}-(n-1)Q_{\gamma}-Q_{a}\sim0$;
		
		\item[(c)] $\C_{\gamma}$ is a hyperbola, $n$ is even and $nQ_{0}-(n-2)Q_{\gamma}-Q_{-b}-Q_{a}\sim0$;
		\item[(d)]  $\C_{\gamma}$ is a hyperbola, $n$ is odd, and $nQ_{0}-(n-1)Q_{\gamma}-Q_a\sim0$;
		\item[(e)]  $\C_{\gamma}$ is a hyperbola, $n$ is odd, and $nQ_{0}-(n-1)Q_{\gamma}-Q_{-b}\sim0$.
	\end{itemize}
	Moreover, such trajectories are always symmetric with respect to the origin in Case (c).
	They are symmetric with respect to the $\mathsf{x}$-axis in Cases (b) and (d), and with respect to the $\mathsf{y}$-axis in Cases (a) and (e).
\end{theorem}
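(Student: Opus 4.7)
The plan is to adapt the Abel--Jacobi integration argument from the proof of Theorem \ref{th:curve-billiard}, modifying only the closure/parity bookkeeping. First, I would run the same integration of \refeq{eq:diff-eq} along the $n$-elliptic periodic trajectory, using the three-case split for $\C_\gamma$ and writing $n_1$, $n_2$ for the numbers of reflections off relativistic ellipses and relativistic hyperbolas, with $n_1+n_2=n$. This step does not use any Cartesian closure, and it yields the primary divisor relation
\begin{equation*}
n_1(Q_0-Q_{\alpha_1})+n_2(Q_0-Q_{\beta_1})\sim 0
\end{equation*}
on $\Curve$, where $(\alpha_1,\beta_1)$ is $(\gamma,a)$, $(-b,\gamma)$, or $(-b,a)$ as appropriate. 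This is already the raw algebro-geometric condition expressing $n$-elliptic periodicity.

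Next I would carry out the parity analysis isolating the elliptic-only case. In the proof of Theorem \ref{th:curve-billiard}, Cartesian closure forced $n_2$ even in Case~1, $n_1$ even in Case~2, and both $n_1,n_2$ even in Case~3. Violating exactly these parities is what it means to be $n$-elliptic periodic without being $n$-periodic. Enumerating the surviving configurations yields the five cases of the statement: $n_1$ odd in (a); $n_2$ odd in (b); $n$ even with both $n_1,n_2$ odd in (c); and $n$ odd with exactly one of $n_1,n_2$ odd, split into (d)/(e) according to which one.

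The third step is the algebraic reduction on $\Curve$. The key input is that $Q_{-b}$, $Q_a$, $Q_\gamma$ are the three finite Weierstrass points of the elliptic curve, so
\begin{equation*}
2Q_{-b}\sim 2Q_a\sim 2Q_\gamma\sim 2Q_\infty.
\end{equation*}
Writing any odd coefficient $k$ as $1+2m$ and using $2mQ_P\sim 2mQ_\gamma$, the right-hand side $n_1Q_{\alpha_1}+n_2Q_{\beta_1}$ collapses modulo principal divisors to $(n-1)Q_\gamma+Q_{-b}$ in (a) and (e); to $(n-1)Q_\gamma+Q_a$ in (b) and (d); and to $(n-2)Q_\gamma+Q_{-b}+Q_a$ in (c). This produces exactly the five displayed divisor conditions. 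The converse direction is obtained by reversing these equivalences: each displayed condition, combined with the 2-torsion structure, determines the parity pattern for $(n_1,n_2)$ uniquely, and hence recovers the original elliptic closure.

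For the symmetry assertion, the same parities control the Cartesian monodromy of the trajectory over one elliptic period: an odd number of visits to $\lambda_1=-b$ flips the sign of one coordinate, an odd number of visits to $\lambda_2=a$ flips the sign of the other, and in Case~(c) both flips occur and compose to a central inversion, while in the remaining cases exactly one flip occurs, giving the stated axis symmetry. The main obstacle I anticipate is pinning down the precise correspondence between the parities of $n_1,n_2$ and those of the axis-crossings in each of the three geometric cases (in particular, confirming that the counts of extremes of $\lambda_1$ and $\lambda_2$ at the inner Weierstrass values coincide with $n_1$, $n_2$ under the dynamical interpretation); once that correspondence is settled, the divisor reduction on $\Curve$ via the 2-torsion relations among the Weierstrass points is essentially mechanical.
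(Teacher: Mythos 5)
Your proposal is correct and takes essentially the same route as the paper: integrate \eqref{eq:diff-eq} over one elliptic period to obtain $n_1(Q_0-Q_{\alpha_1})+n_2(Q_0-Q_{\beta_1})\sim 0$, note that failure of $n$-periodicity is exactly the failure of the parity constraints on $n_1,n_2$ from the proof of Theorem \ref{th:curve-billiard}, and collapse the divisor via the two-torsion relations $2Q_a\sim 2Q_{-b}\sim 2Q_{\gamma}$ among the Weierstrass points. The paper's proof is simply a terser version of this, stating the integral relation and the parity criterion and leaving the five-case enumeration and the divisor reduction implicit.
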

\begin{proof}
	Let $M_0$ be the initial point of a given $n$-elliptic periodic trajectory, and $M_1$ the next point on the trajectory with the same elliptic coordinates.
	Then, integrating \eqref{eq:diff-eq} $M_0$ to $M_1$ along the trajectory, we get:
	$$
	n_1(Q_0-Q_{\alpha_1})+n_2(Q_{0}-Q_{\beta_1})\sim0,
	$$
	where $n=n_1+n_2$, and $n_1$ is the number of times that the particle hit the arcs of $\E$ with time-like metrics, and $n_2$ the number of times it hit the arcs with space-like metrics.
	We denoted by $\alpha_1$ the largest negative member of the set $\{a,-b,\gamma\}$, and by $\beta_1$ its smallest positive member.
	
	The trajectory is not $n$-periodic if and only if at least one of $n_1$, $n_2$ is odd, which then leads to the stated conclusions.
\end{proof}

The explicit Cayley-type conditions for elliptic periodic trajectories are:
\begin{theorem}\label{th:elliptic-cayley}
	A billiard trajectory within $\E$ with the caustic $\Q_{\gamma}$ is $n$-elliptic periodic without being $n$-periodic if and only if one of the following conditions is satisfied:
	\begin{itemize}
		\item[(a)] $\C_{\gamma}$ is an ellipse, $0<\gamma<a$, and
		\begin{gather*}
		D_1=0,
		\quad
		\left|
		\begin{array}{cc}
		D_1 & D_2
		\\
		D_2 & D_3
		\end{array}
		\right|=0,
		\quad
		\left|
		\begin{array}{ccc}
		D_1 & D_2 & D_3
		\\
		D_2 & D_3 & D_4
		\\
		D_3 & D_4 & D_5
		\end{array}
		\right|=0,
		\dots
		\quad\text{for}\quad n=2,4,6,\dots
		\\
		E_2=0,
		\quad
		\left|
		\begin{array}{cc}
		E_2 & E_3
		\\
		E_3 & E_4
		\end{array}
		\right|=0,
		\quad
		\left|
		\begin{array}{ccc}
		E_2 & E_3 & E_4
		\\
		E_3 & E_4 & E_5
		\\
		E_4 & E_5 & E_6
		\end{array}
		\right|=0,
		\dots
		\quad\text{for}\quad n=3,5,7,\dots;
		\end{gather*}
\item[(b)] $\C_{\gamma}$ is an ellipse, $-b<\gamma<0$, and
\begin{gather*}
E_1=0,
\quad
\left|
\begin{array}{cc}
E_1 & E_2
\\
E_2 & E_3
\end{array}
\right|=0,
\quad
\left|
\begin{array}{ccc}
E_1 & E_2 & E_3
\\
E_2 & E_3 & E_4
\\
E_3 & E_4 & E_5
\end{array}
\right|=0,
\dots
\quad\text{for}\quad n=2,4,6,\dots
\\
D_2=0,
\quad
\left|
\begin{array}{cc}
D_2 & D_3
\\
D_3 & D_4
\end{array}
\right|=0,
\quad
\left|
\begin{array}{ccc}
D_2 & D_3 & D_4
\\
D_3 & D_4 & D_5
\\
D_4 & D_5 & D_6
\end{array}
\right|=0,
\dots
\quad\text{for}\quad n=3,5,7,\dots;
\end{gather*}

		\item[(c)] $\Q_{\gamma}$ is a hyperbola, $n$ even and
		\begin{gather*}
		C_1=0,
		\quad
		\left|
		\begin{array}{cc}
		C_1 & C_2
		\\
		C_2 & C_3
		\end{array}
		\right|=0,
		\quad
		\left|
		\begin{array}{ccc}
		C_1 & C_2 & C_3
		\\
		C_2 & C_3 & C_4
		\\
		C_3 & C_4 & C_5
		\end{array}
		\right|=0,
		\dots
		\quad\text{for}\quad n=2,4,6,\dots
		\end{gather*}	
		\item[(d)]  $\Q_{\gamma}$ is a hyperbola, $n$ is odd, and
		$$
		D_2=0,
		\quad
		\left|
		\begin{array}{cc}
		D_2 & D_3
		\\
		D_3 & D_4
		\end{array}
		\right|=0,
		\quad
		\left|
		\begin{array}{ccc}
		D_2 & D_3 & D_4
		\\
		D_3 & D_4 & D_5
		\\
		D_4 & D_5 & D_6
		\end{array}
		\right|=0,
		\dots
		\quad\text{for}\quad n=3,5,7,\dots.
		$$	
\item[(e)] $\Q_{\gamma}$ is a hyperbola, $n$ is odd, and
$$
E_2=0,
\quad
\left|
\begin{array}{cc}
E_2 & E_3
\\
E_3 & E_4
\end{array}
\right|=0,
\quad
\left|
\begin{array}{ccc}
E_2 & E_3 & E_4
\\
E_3 & E_4 & E_5
\\
E_4 & E_5 & E_6
\end{array}
\right|=0,
\dots
\quad\text{for}\quad n=3,5,7,\dots.
$$	
	\end{itemize}
	Here, we denoted:
	\begin{gather*}
	\frac{\sqrt{\varepsilon(a-x)(b+x)(\gamma-x)}}{a-x}=D_0+D_1x+D_2x^2+\dots,
	\\
	\frac{\sqrt{\varepsilon(a-x)(b+x)(\gamma-x)}}{b+x}=E_0+E_1x+E_2x^2+\dots,
	\end{gather*}
	the Taylor expansion around $x=0$, while $B$s and $C$s are as in Theorem \ref{th:cayley-billiard}.
\end{theorem}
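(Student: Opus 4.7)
The plan is to follow the strategy of the proof of Theorem \ref{th:cayley-billiard}, now applied to the five divisor equivalences supplied by Theorem \ref{th:elliptic-periodic}. The first step is to exploit the $2$-torsion structure of $\Curve$: the rational functions $a-x$, $b+x$, and $\gamma-x$ directly yield
$$
2Q_a\sim 2Q_{-b}\sim 2Q_\gamma\sim 2Q_\infty,
$$
and combining these provides $Q_a+Q_{-b}\sim Q_\gamma+Q_\infty$ together with its cyclic analogues. Using these relations, each of the five equivalences of Theorem \ref{th:elliptic-periodic} can be reduced to the canonical form
$$
nQ_0\sim(n-1)Q_\infty+Q_*,\qquad Q_*\in\{Q_a,Q_{-b},Q_\gamma\}.
$$
The pairing of case and finite pole $Q_*$ is as follows: in case (a) one obtains $Q_*=Q_a$ for $n$ even (pairing $Q_\gamma$ with the extra $Q_{-b}$) and $Q_*=Q_{-b}$ for $n$ odd (using $(n-1)Q_\gamma\sim(n-1)Q_\infty$ directly); case (b) is the mirror image with $Q_a$ and $Q_{-b}$ exchanged; case (c) yields $Q_*=Q_\gamma$; and cases (d) and (e) give $Q_*=Q_a$ and $Q_*=Q_{-b}$ respectively.

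The second step is to exhibit, for each choice of $Q_*$ and parity of $n$, a basis of the Riemann--Roch space $\mathcal{L}((n-1)Q_\infty+Q_*)$. The pure powers $1,x,\dots,x^{\lfloor(n-1)/2\rfloor}$ contribute functions with poles only at $Q_\infty$ of even order at most $n-1$. The remaining basis elements are of the form $x^jy/(a-x)$, $x^jy/(b+x)$, or $x^jy/(\gamma-x)$ according to whether $Q_*$ equals $Q_a$, $Q_{-b}$, or $Q_\gamma$; each has a simple pole at $Q_*$ and a pole of order $2j+1$ at $Q_\infty$, and we retain those $j$ for which $2j+1\le n-1$. A direct count shows that the total number of basis functions equals $n=\deg((n-1)Q_\infty+Q_*)$, as required by Riemann--Roch. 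The equivalence $nQ_0\sim(n-1)Q_\infty+Q_*$ is then equivalent to the existence of a non-trivial linear combination of these basis elements with a zero of order $n$ at $Q_0$.

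The third step converts that existence into a determinantal condition on the Taylor coefficients, around $x=0$, of $y/(a-x)$, $y/(b+x)$, or $y/(\gamma-x)$, which are precisely the $D_k$, $E_k$, and $C_k$ defined in the statement. The pure polynomial basis elements can be used to cancel the lowest-order Taylor coefficients of the linear combination with no constraint on their coefficients, while the coefficients of the $y/(x-x_*)$-type terms are constrained by a square linear system of Hankel type whose vanishing determinant, after a column reversal, reproduces the matrices displayed in the theorem, with index range determined by the parity of $n$ exactly as stated.

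The main obstacle is bookkeeping: five separate cases require one to track the correct combination of parity, finite pole, and index range. Once the divisor reductions above are verified and the appropriate bases identified, however, the argument is the direct analogue of the one for Theorem \ref{th:cayley-billiard}, with only the relevant Taylor expansion changing from case to case.
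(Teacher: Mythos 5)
Your proposal is correct and follows essentially the same route as the paper: the same $2$-torsion reductions bring each divisor relation from Theorem \ref{th:elliptic-periodic} to the form $nQ_0\sim(n-1)Q_\infty+Q_*$ with the same choice of $Q_*$ in each case, the same Riemann--Roch bases are used, and the same Hankel determinant conditions on the Taylor coefficients $C_k$, $D_k$, $E_k$ result. The only difference is expository: you describe the mechanism uniformly across the five cases, whereas the paper writes out cases (a) in full and abbreviates (b)--(e) to the divisor reductions.
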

\begin{proof}
	(a)
	Take first $n$ even.
	Using Theorem \ref{th:elliptic-periodic}, we have:
	$$
	nQ_0
	\sim
	(n-1)Q_{\gamma}+Q_{-b}\sim (n-2)Q_{\infty}+Q_{-b}+Q_{\gamma}
	\sim
	(n-2) Q_{\infty}+Q_{\infty}+Q_{a}
	\sim
	(n-1)Q_{\infty}+Q_{a}.
	$$
	The basis of $\mathcal{L}((n-1)Q_{\infty}+Q_{a})$ is:
	$$
	1,x,x^2,\dots,x^{n/2-1},\frac{y}{x-a},\frac{xy}{x-a},
	\frac{x^{n/2-1}y}{x-a},
	$$
	thus a non-trivial linear combination of these functions with a zero of order $n$ at $x=0$ exists if and only if:
	$$
	\left|
	\begin{array}{llll}
	D_{n/2} & D_{n/2-1} & \dots & D_1\\
	D_{n/2+1} & D_{n/2} & \dots & D_2\\
	\dots\\
	D_{n-1} & D_{n-2} & \dots & D_{n/2}
	\end{array}
	\right|
	=0.
	$$
	For odd $n$, we have:
	$$
	nQ_0
	\sim
	(n-1)Q_{\gamma}+Q_{-b}
	\sim
	(n-1)Q_{\infty}+Q_{-b}.
	$$
	The basis of $\mathcal{L}((n-1)Q_{\infty}+Q_{-b})$ is:
	$$
	1,x,x^2,\dots,x^{(n-1)/2},\frac{y}{x+b},\frac{xy}{x+b}, \frac{x^{(n-1)/2-1}y}{x+b},
	$$
	thus a non-trivial linear combination of these functions with a zero of order $n$ at $x=0$ exists if and only if:
	$$
	\left|
	\begin{array}{llll}
	E_{(n-1)/2+1} & E_{(n-1)/2} & \dots & E_2\\
	E_{(n-1)/2+2} & E_{(n-1)/2+1} & \dots & E_3\\
	\dots\\
	E_{n-1} & E_{n-2} & \dots & E_{(n-1)/2+1}
	\end{array}
	\right|
	=0.
	$$

Case (b) is done similarly as (a).
	
	(c) We have
	$
	nQ_0\sim(n-2)Q_{\gamma}+Q_{-b}+Q_a\sim(n-1)Q_{\infty}+Q_{\gamma}.
	$
	
	(d) We have $nQ_0\sim (n-1)Q_{\gamma}+Q_a\sim(n-1)Q_{\infty}+Q_a$.
	
	(e) We have $nQ_0\sim (n-1)Q_{\gamma}+Q_{-b}\sim(n-1)Q_{\infty}+Q_{-b}$.
	\end{proof}

\section{Examples of elliptic periodic trajectories: $2\le n\le5$}
\label{sec:examples-elliptic}

\subsubsection*{2-elliptic periodic trajectories}
There is a $2$-elliptic periodic trajectory without being $2$-periodic of the billiard within \refeq{eq:ellipse}, with a non-degenerate caustic $\C_{\gamma}$ if and only if, according to Theorem \ref{th:elliptic-cayley} one of the following is satisfied:
\begin{itemize}
	\item the caustic is an ellipse, with $\gamma \in (0,a)$ and $D_1=0$;
	\item the caustic is an ellipse, with $\gamma \in (-b,0)$ and  $E_1=0$;
	\item the caustic is a hyperbola, $n$ is even, and $C_1=0$.
\end{itemize}
We consider the following equations:
\begin{align*}
D_1=\dfrac{(a+b)\gamma-ab}{2\sqrt{a^3b\gamma}}=0,
\quad
E_1=-\dfrac{(a+b)\gamma+ab}{2\sqrt{ab^3\gamma}}=0,
\quad
C_1= \dfrac{(a-b)\gamma+ab}{2\sqrt{ab\gamma^3}}=0,
\end{align*}
which respectively yield the solutions for the parameter $\gamma$ of the caustic:
\begin{align*}
\gamma=\dfrac{ab}{a+b},
\quad
\gamma=-\dfrac{ab}{a+b},
\quad
\gamma=-\dfrac{ab}{a-b}.
\end{align*}

Some examples of $2$-elliptic periodic trajectories without being $2$-periodic are shown in Figures \ref{fig:2-elliptic-periodice} and \ref{fig:2-elliptic-periodich}.
	\begin{figure}[h]
\begin{minipage}{0.5\textwidth}
		\centering
			\begin{tikzpicture}[scale=1]
	
	\draw[thick](0,0) circle [x radius={sqrt(5)}, y radius={sqrt(3)}];
	
	\draw[thick,dashed,gray](0,0) circle [x radius={sqrt(5+1.875)}, y radius={sqrt(3-1.875)}];
	
	\draw[gray] ({sqrt(8)+0.5},-0.5) -- (-0.5,{sqrt(8)+0.5});
	\draw[gray] ({-sqrt(8)-0.5},0.5) -- (0.5,{-sqrt(8)-0.5});
	\draw[gray] (0.5,{sqrt(8)+0.5}) -- ({-sqrt(8)-0.5},-0.5);
	\draw[gray] (-0.5,{-sqrt(8)-0.5}) -- ({sqrt(8)+0.5},0.5);
	
	\draw[very thick, blue](.3, 1.7163916) -- (-2.0697794, .65544485) -- (-.3, 1.716391);

	\end{tikzpicture}
	\end{minipage}
\begin{minipage}{0.5\textwidth}
		\centering
		\begin{tikzpicture}[scale=0.85]

\draw[thick](0,0) circle [x radius={sqrt(5)}, y radius={sqrt(7)}];

\draw[thick,dashed,gray](0,0) circle [x radius={sqrt(5-2.91667)}, y radius={sqrt(7+2.91667)}];

\draw[gray] ({sqrt(12)+0.5},-0.5) -- (-0.5,{sqrt(12)+0.5});
\draw[gray] ({-sqrt(12)-0.5},0.5) -- (0.5,{-sqrt(12)-0.5});
\draw[gray] (0.5,{sqrt(12)+0.5}) -- ({-sqrt(12)-0.5},-0.5);
\draw[gray] (-0.5,{-sqrt(12)-0.5}) -- ({sqrt(12)+0.5},0.5);

\draw[very thick, blue](2, -1.18322) -- (1.04167, 2.34112) -- (2, 1.18322);

\end{tikzpicture}
\end{minipage}
		\caption{A $2$-elliptic periodic trajectories with ellipses as caustics. On the left, the caustic is an ellipse along $\mathsf{x}$-axis ($a=5$, $b=3$, $\gamma=-15/8$), and on the right an ellipse along $\mathsf{y}$-axis
		($a=5$, $b=7$ and $\gamma =35/12$).}
\label{fig:2-elliptic-periodice}
\end{figure}
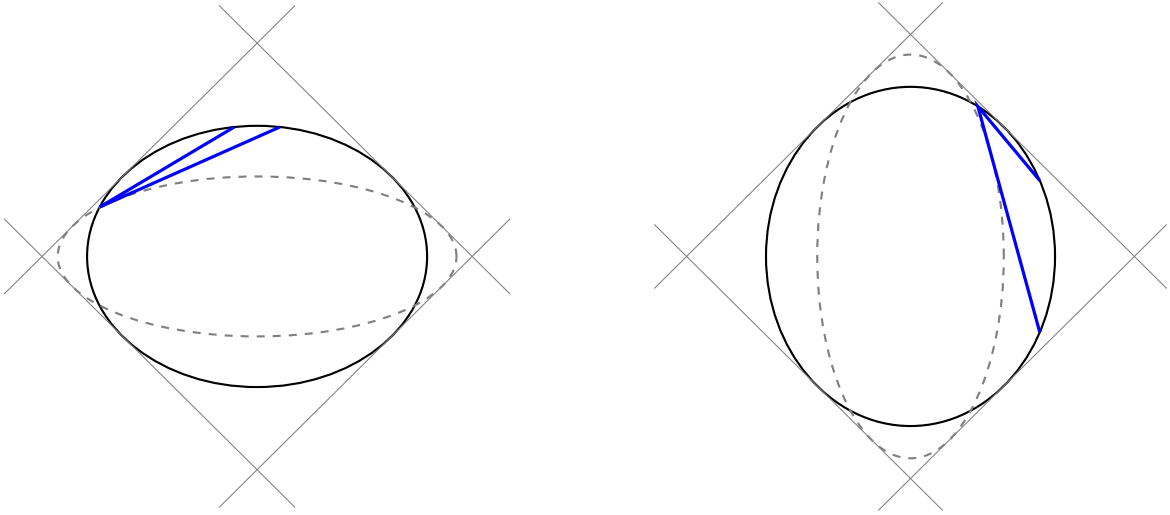
	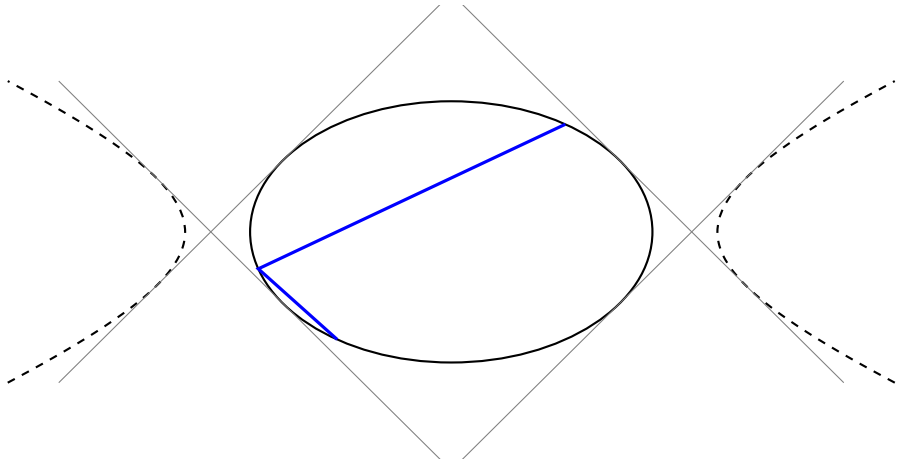
\begin{figure}[h]
		\centering
	\begin{tikzpicture}[scale=1]
\clip (-6,-3) rectangle (6,3);

\draw[thick](0,0) circle [x radius={sqrt(7)}, y radius={sqrt(3)}];

hyperbola lambda=-5.25
\draw[domain=-2:2,smooth,thick,dashed,variable=\t] plot ({sqrt(7+5.25)*sqrt(1-\t*\t/(3-5.25))},{\t});
\draw[domain=-2:2,smooth,thick,dashed,variable=\t] plot ({-sqrt(7+5.25)*sqrt(1-\t*\t/(3-5.25))},{\t});

\draw[gray] ({sqrt(10)+2},-2) -- (-2,{sqrt(10)+2});
\draw[gray] ({-sqrt(10)-2},2) -- (2,{-sqrt(10)-2});
\draw[gray] (2,{sqrt(10)+2}) -- ({-sqrt(10)-2},-2);
\draw[gray] (-2,{-sqrt(10)-2}) -- ({sqrt(10)+2},2);

\draw[very thick, blue] (1.5, 1.4267846) -- (-2.5376610, -.49001897) -- (-1.5, -1.4267846);

\end{tikzpicture}
		\caption{A $2$-elliptic periodic trajectory with a hyperbola as caustic ($a=7$, $b= 3$, $\gamma=-5.25$).}
		\label{fig:2-elliptic-periodich}
	\end{figure}

\subsubsection*{3-elliptic periodic trajectories}
There is a $3$-elliptic periodic trajectory without being $3$-periodic of the billiard within \refeq{eq:ellipse}, with a non-degenerate caustic $\C_{\gamma}$ if and only if one of the following is satisfied:
\begin{itemize}
	\item $E_2=0$ and either the caustic is an ellipse with $\gamma \in (0,a)$ or the caustic is a hyperbola with $n$ even;
	\item $D_2=0$ and either the caustic is an ellipse with $\gamma \in (-b,0)$ or the caustic is a hyperbola.
\end{itemize}
The equations $E_2=0$ and $D_2=0$ are respectively equivalent to:
\begin{gather}
-(a+b)(3a-b)\gamma^2-2ab(a+b)\gamma+a^2b^2=0,\label{eq:3-elliptic-periodic1}
\\
(a+b)(a-3b)\gamma^2+2ab(a+b)\gamma+a^2b^2 =0,\label{eq:3-elliptic-periodic2}
\end{gather}
which respectively yield the pairs of solutions for the parameter $\gamma$ of the caustic:
\begin{align*}
\gamma=\dfrac{(-a-b\pm2\sqrt{a^2+ab})ba}{(a+b)(3a-b)} ,
\quad
\gamma= \dfrac{(-a-b\pm2\sqrt{b^2+ab})ba}{(a+b)(a-3b)}.
\end{align*}

Examples of $3$-elliptic periodic trajectories which are not $3$-periodic are shown in Figures \ref{fig:3-elliptic-periodich}, \ref{fig:3-elliptic-periodicex}, \ref{fig:3-elliptic-periodicey}.

	\begin{figure}[H]
	\begin{minipage}{0.5\textwidth}
		\centering
	\begin{tikzpicture}[scale=1]
\clip (-4,-3.5) rectangle (4,3.5);

\draw[thick](0,0) circle [x radius={sqrt(6)}, y radius={sqrt(3)}];

hyperbola lambda=-3.1595918
\draw[domain=-1:1,smooth,thick,dashed,variable=\t] plot ({sqrt(6+3.1595918)*sqrt(1-\t*\t/(3-3.1595918))},{\t});
\draw[domain=-1:1,smooth,thick,dashed,variable=\t] plot ({-sqrt(6+3.1595918)*sqrt(1-\t*\t/(3-3.1595918))},{\t});

\draw[gray] ({sqrt(9)+2},-2) -- (-2,{sqrt(9)+2});
\draw[gray] ({-sqrt(9)-2},2) -- (2,{-sqrt(9)-2});
\draw[gray] (2,{sqrt(9)+2}) -- ({-sqrt(9)-2},-2);
\draw[gray] (-2,{-sqrt(9)-2}) -- ({sqrt(9)+2},2);

\draw[very thick, blue] (1,1.5811388) -- (-2.4369583, .1749772) -- (2.3389687, -.51440518) -- (1,-1.5811388);
\end{tikzpicture}
	\end{minipage}
\begin{minipage}{0.5\textwidth}
		\centering
	\begin{tikzpicture}[scale=1]
\clip (-3.5,-3.5) rectangle (3.5,3.5);


\draw[thick](0,0) circle [x radius={sqrt(3)}, y radius={sqrt(5)}];

hyperbola lambda=3.2264236
\draw[domain=-1:1,smooth,thick,dashed,variable=\t] plot ({\t},{sqrt(5+3.2264236)*sqrt(1-\t*\t/(3-3.2264236)});
\draw[domain=-1:1,smooth,thick,dashed,variable=\t] plot ({\t},{-sqrt(5+3.2264236)*sqrt(1-\t*\t/(3-3.2264236)});

\draw[gray] ({sqrt(8)+2},-2) -- (-2,{sqrt(8)+2});
\draw[gray] ({-sqrt(8)-2},2) -- (2,{-sqrt(8)-2});
\draw[gray] (2,{sqrt(8)+2}) -- ({-sqrt(8)-2},-2);
\draw[gray] (-2,{-sqrt(8)-2}) -- ({sqrt(8)+2},2);

\draw[very thick, blue] (.4, 2.1756225) -- (1.7264240, .18009374) -- (.32752004, -2.1957271) -- (-.4, 2.1756225);
\end{tikzpicture}
\end{minipage}
		\caption{$3$-elliptic periodic trajectries with hyperbolas as caustics. On the left, the caustic is orientied along the $\mathsf{x}$-axis ($a=6$, $b=3$, $\gamma\approx-3.1595918$), and on the right along $\mathsf{y}$-axis ($a=3$, $b=5$, $\gamma\approx3.2264236$).}
		\label{fig:3-elliptic-periodich}
	\end{figure}
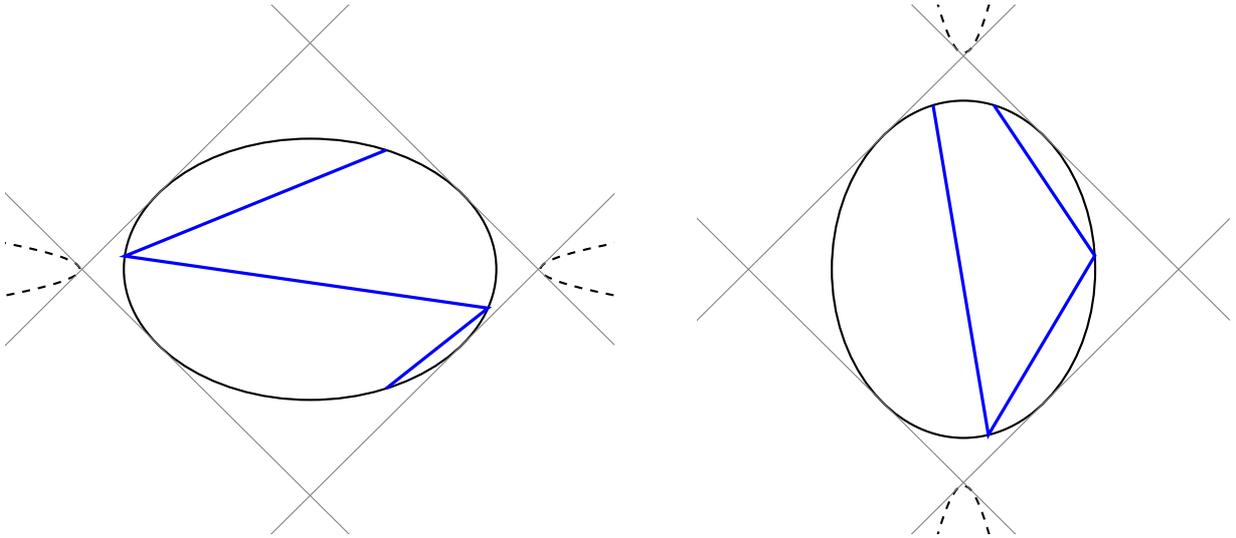
\begin{figure}[H]
	\centering
		\begin{tikzpicture}[scale=2.5]
\clip (-3.5,0.4) rectangle (3.5,1.5);

\draw[thick](0,0) circle [x radius={sqrt(9)}, y radius={sqrt(2)}];

\draw[thick,dashed,gray](0,0) circle [x radius={sqrt(9+.8831827)}, y radius={sqrt(2-.8831827)}];

\draw[gray] ({sqrt(11)+0.5},-0.5) -- (-0.5,{sqrt(11)+0.5});
\draw[gray] ({-sqrt(11)-0.5},0.5) -- (0.5,{-sqrt(11)-0.5});
\draw[gray] (0.5,{sqrt(11)+0.5}) -- ({-sqrt(11)-0.5},-0.5);
\draw[gray] (-0.5,{-sqrt(11)-0.5}) -- ({sqrt(11)+0.5},0.5);

\draw[very thick, blue](2.4, .84852814) -- (-1.3599411, 1.2605607) -- (-2.8265056, .47395874) -- (-2.4, .84852814);

\end{tikzpicture}
	\caption{A $3$-elliptic periodic trajectory with an ellipse along $\mathsf{x}$-axis as caustic ($a=9$, $b=2$, $\gamma\approx-0.8831827$).}
	\label{fig:3-elliptic-periodicex}
\end{figure}
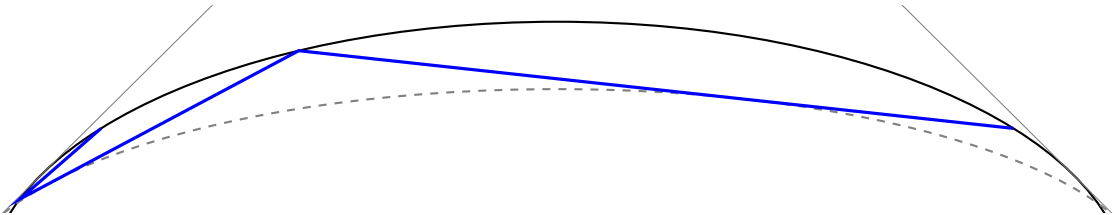
\begin{figure}[h]
	\centering
		\begin{tikzpicture}[scale=1.5]

\draw[thick](0,0) circle [x radius={sqrt(4)}, y radius={sqrt(9)}];

\draw[thick,dashed,gray](0,0) circle [x radius={sqrt(4-1.312805)}, y radius={sqrt(9+1.312805)}];

\draw[gray] ({sqrt(13)+0.5},-0.5) -- (-0.5,{sqrt(13)+0.5});
\draw[gray] ({-sqrt(13)-0.5},0.5) -- (0.5,{-sqrt(13)-0.5});
\draw[gray] (0.5,{sqrt(13)+0.5}) -- ({-sqrt(13)-0.5},-0.5);
\draw[gray] (-0.5,{-sqrt(13)-0.5}) -- ({sqrt(13)+0.5},0.5);

\draw[very thick, blue](1.4, 2.1424285) -- (.94282998, 2.6457345) -- (1.8560494, 1.1175561) -- (1.4, -2.1424285);

\end{tikzpicture}
	\caption{A $3$-elliptic periodic trajectory with an ellipse along $\mathsf{y}$-axis as caustic ($a=4$, $b=9$, $\gamma\approx1.312805$).}
	\label{fig:3-elliptic-periodicey}
\end{figure}

\subsubsection*{4-elliptic periodic trajectories}
There is a $4$-elliptic periodic trajectory without being $4$-periodic of the billiard within \refeq{eq:ellipse}, with a non-degenerate caustic $\C_{\gamma}$ if and only if, according to Theorem \ref{th:elliptic-cayley}, one of the following is satisfied:
\begin{itemize}
	\item the caustic is an ellipse, with $\gamma \in (0,a)$ and $D_3D_1-D^{2}_2=0$,
	i.e.
$$
(a+b)^4\gamma^4-4ab(a+b)(a-b)^2\gamma^3-2a^2b^2(a+b)(5a-3b)\gamma^2-4a^3b^3(a+b)\gamma+a^4b^4=0;
$$
	\item the caustic is an ellipse, with $\gamma \in (-b,0)$ and $E_3E_1-E^{2}_2=0$,
	i.e.
$$
(a+b)^4\gamma^4+4ab(a+b)(a-b)^2\gamma^3+2a^2b^2(a+b)(3a-5b)\gamma^2+4a^3b^3(a+b)\gamma+a^4b^4=0;
$$
	\item the caustic is a hyperbola and $C_3C_1-C^{2}_2=0$, i.e.
$$
 (a^2-6ab+b^2)(a+b)^2\gamma^4+4ab(a-b)(a+b)^2\gamma^3
+2a^2b^2(3a^2+2ab+3b^2)\gamma^2+4a^3b^3(a-b)\gamma+a^4b^4 =0.
$$
\end{itemize}
Each real solution $\gamma$ for the above equations for some fixed values of $a$ and $b$ will produce a $4$-elliptic periodic trajectory which is not $4$-periodic.
Some examples are shown in Figure \ref{fig:4-elliptic-periodic}.
\begin{figure}[h]
\begin{minipage}{0.5\textwidth}
	\centering
		\begin{tikzpicture}[scale=1]
\clip (-3.5,-3.5) rectangle (3.5,3.5);


\draw[thick](0,0) circle [x radius={sqrt(5)}, y radius={sqrt(3)}];

\draw[thick,dashed,gray](0,0) circle [x radius={sqrt(5-4.6212)}, y radius={sqrt(3+4.6212)}];

\draw[gray] ({sqrt(8)+2},-2) -- (-2,{sqrt(8)+2});
\draw[gray] ({-sqrt(8)-2},2) -- (2,{-sqrt(8)-2});
\draw[gray] (2,{sqrt(8)+2}) -- ({-sqrt(8)-2},-2);
\draw[gray] (-2,{-sqrt(8)-2}) -- ({sqrt(8)+2},2);

\draw[very thick, blue](2, -0.774596) -- (.718617, 1.64017) -- (.544007, -1.6800) -- (1.51403, 1.27460) -- (2, 0.774596);
\end{tikzpicture}
\end{minipage}
\begin{minipage}{0.5\textwidth}
	\centering
	\begin{tikzpicture}[scale=1]
\clip (-3.5,-3.5) rectangle (3.5,3.5);


\draw[thick](0,0) circle [x radius={sqrt(5)}, y radius={sqrt(3)}];

hyperbola lambda=-3.0243
\draw[domain=-.3:.3,smooth,thick,dashed,variable=\t] plot ({sqrt(5+3.0243)*sqrt(1-\t*\t/(3-3.0243)},{\t});
\draw[domain=-.3:.3,smooth,thick,dashed,variable=\t] plot ({-sqrt(5+3.0243)*sqrt(1-\t*\t/(3-3.0243)},{\t});

\draw[gray] ({sqrt(8)+2},-2) -- (-2,{sqrt(8)+2});
\draw[gray] ({-sqrt(8)-2},2) -- (2,{-sqrt(8)-2});
\draw[gray] (2,{sqrt(8)+2}) -- ({-sqrt(8)-2},-2);
\draw[gray] (-2,{-sqrt(8)-2}) -- ({sqrt(8)+2},2);

\draw[very thick, blue] (2, -0.774596) -- (1.35590, -1.37729) -- (-2.18, -.17) -- (2.1, 0.072622) -- (-2, 0.774596);
\end{tikzpicture}
\end{minipage}
	\caption{$4$-elliptic periodic trajectories. On the left, the caustic is an ellipse ($a=5$, $b= 3$, $\gamma\approx4.6216$), and it is a hyperbola on the right ($a=5$, $b=3$, $\gamma\approx-3.0243$).}
\label{fig:4-elliptic-periodic}
\end{figure}
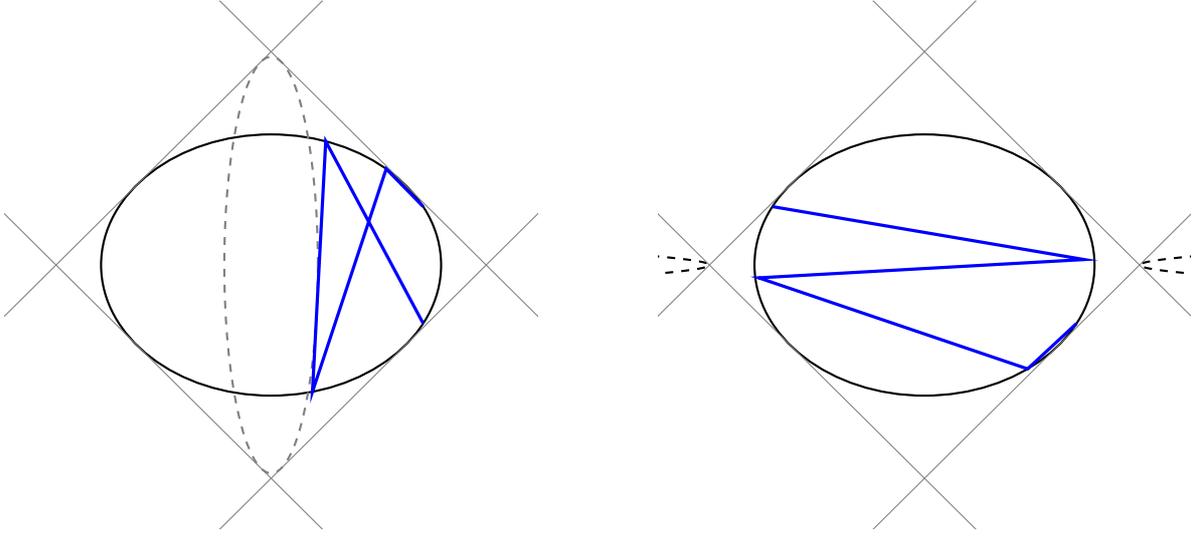

\subsubsection*{5-elliptic periodic trajectories}
According to Theorem \ref{th:elliptic-cayley}, there is a $5$-elliptic periodic trajectory without being $5$-periodic of the billiard within \refeq{eq:ellipse}, with a non-degenerate caustic $\C_{\gamma}$ if and only if one of the following is satisfied:

\begin{itemize}
	\item the caustic is an ellipse, with $\gamma \in (0,a)$ or a hyperbola and $E_2E_4-E^{2}_3=0$, i.e.
\begin{align*}
& (5a^2-10ab+b^2)(a+b)^4\gamma^6+2ab(5a-3b)(a+b)^4\gamma^5\\
&-a^2b^2(a+b)(9a^3-45a^2b-5ab^2-15b^3)\gamma^4-4a^3b^3(a+b)(9a^2-10ab+5b^2)\gamma^3\\
&-a^4b^4(a+b)(29a-15b)\gamma^2-6a^5b^5(a+b)\gamma+a^6b^6=0;
\end{align*}

\item the caustic is an ellipse, with $\gamma \in (-b,0)$ or a hyperbola and $D_2D_4-D^{2}_3=0$, i.e.
\begin{align*}
&(a^2-10ab+5b^2)(a+b)^4\gamma^6+2ab(3a-5b)(a+b)^4\gamma^5\\
&+a^2b^2(a+b)(15a^3+5a^2b+45ab^2-9b^3)\gamma^4+4a^3b^3(a+b)(5a^2-10ab+9b^2)\gamma^3\\
&+a^4b^4(a+b)(15a-29b)\gamma^2+6a^5b^5(a+b)\gamma+a^6b^6=0.
\end{align*}
\end{itemize}
Each real solution $\gamma$ for the above equations for some fixed values of $a$ and $b$ will produce a $5$-elliptic periodic trajectory which is not $5$-periodic.
Some examples are shown in Figure \ref{fig:5-elliptic-periodic}.
	\begin{figure}[h]
\begin{minipage}{0.5\textwidth}
		\centering
		\begin{tikzpicture}[scale=1]
\clip (-4,-4) rectangle (4,4);


\draw[thick](0,0) circle [x radius={sqrt(7)}, y radius={sqrt(4)}];

\draw[thick,dashed,gray](0,0) circle [x radius={sqrt(7+3.3848)}, y radius={sqrt(4-3.3848)}];

\draw[gray] ({sqrt(11)+2},-2) -- (-2,{sqrt(11)+2});
\draw[gray] ({-sqrt(11)-2},2) -- (2,{-sqrt(11)-2});
\draw[gray] (2,{sqrt(11)+2}) -- ({-sqrt(11)-2},-2);
\draw[gray] (-2,{-sqrt(11)-2}) -- ({sqrt(11)+2},2);

\draw[very thick, blue](1, -1.852) -- (2.479, -.699) -- (-2.373, -.8846) -- (-1.613, -1.585) -- (2.569, -.4789) -- (-1, -1.852);
\end{tikzpicture}
\end{minipage}
\begin{minipage}{0.5\textwidth}
	\centering
	\begin{tikzpicture}[scale=1]
\clip (-4,-4) rectangle (4,4);

\draw[thick](0,0) circle [x radius={sqrt(3)}, y radius={sqrt(7)}];

hyperbola lambda=3.4462
\draw[domain=-1:1,smooth,thick,dashed,variable=\t] plot ({\t},{sqrt(7+3.4462)*sqrt(1-\t*\t/(3-3.4462)});
\draw[domain=-1:1,smooth,thick,dashed,variable=\t] plot ({\t},{-sqrt(7+3.4462)*sqrt(1-\t*\t/(3-3.4462)});

\draw[gray] ({sqrt(10)+2},-2) -- (-2,{sqrt(10)+2});
\draw[gray] ({-sqrt(10)-2},2) -- (2,{-sqrt(10)-2});
\draw[gray] (2,{sqrt(10)+2}) -- ({-sqrt(10)-2},-2);
\draw[gray] (-2,{-sqrt(10)-2}) -- ({sqrt(10)+2},2);

\draw[very thick, blue] (.5, -2.53311) -- (-.543390, 2.51218) -- (-1.43296, 1.48620) --(0.0130805, -2.64568) -- (1.49069, 1.34723) -- (.5, 2.53311);
\end{tikzpicture}
\end{minipage}
		\caption{$5$-elliptic periodic trajectories. On the left, the caustic is an ellipse ($a=7$, $b=4$, $\gamma\approx-3.3848$) and a hyperbola on the right ($a=3$, $b=7$, $\gamma\approx3.4462$).}
	\label{fig:5-elliptic-periodic}
\end{figure}
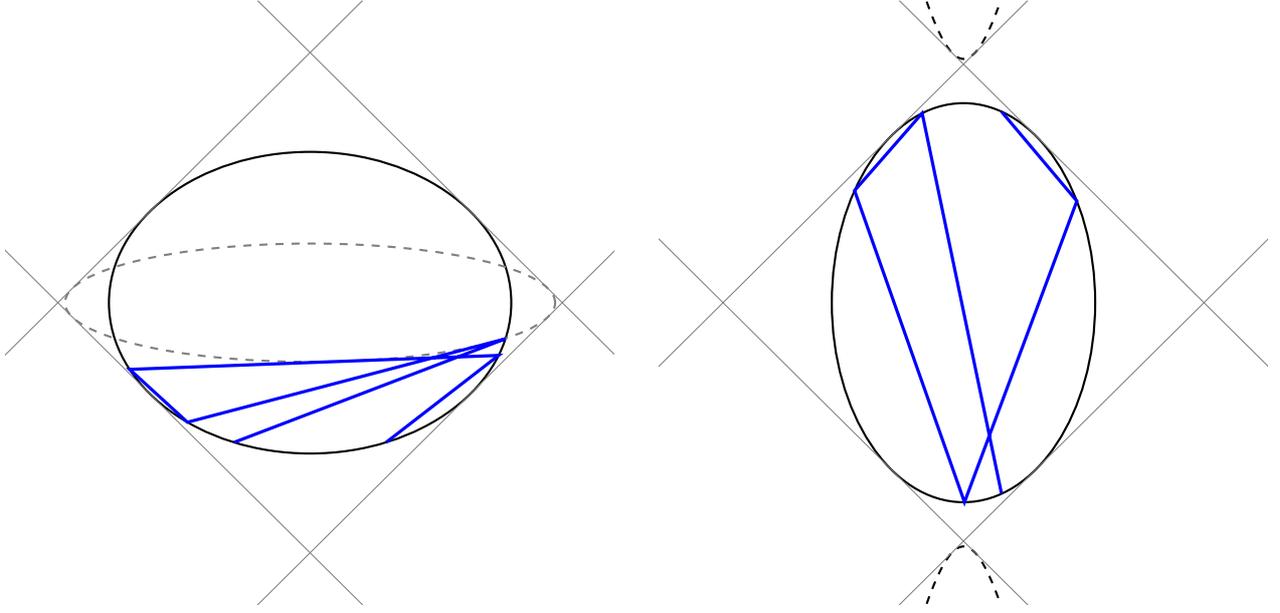

\subsubsection*{Discriminantly separable polynomials and elliptic periodicity}

Since the case $n=2$ is trivial, we start with the case $n=3$.

From  \refeq{eq:3-elliptic-periodic1} and \refeq{eq:3-elliptic-periodic2}, we have:
\begin{gather*}
\mathsf{G}_{1}(a,b,\gamma)=-(a+b)(3a-b)\gamma^2-2ab(a+b)\gamma+a^2b^2,
\\
\mathsf{G}_{2}(a,b,\gamma)=	(a+b)(a-3b)\gamma^2+2ab(a+b)\gamma+a^2b^2,
\end{gather*}
and we calculate the discriminants, which factorize as follows:
$$	
\mathcal{D_{\gamma}}\mathsf{G}_{1}=16a^3b^2(a+b),
\quad
	\mathcal{D_{\gamma}}\mathsf{G}_{2}=16b^3a^2(a+b).
$$

Similarly, for $n=4$, we have:
\begin{align*}
\mathsf{G}_{3}(a,b,\gamma)=&(a+b)^4\gamma^4-4ab(a+b)(a-b)^2\gamma^3-2a^2b^2(a+b)(5a-3b)\gamma^2-4a^3b^3(a+b)\gamma+a^4b^4,
\\
\mathsf{G}_{4}(a,b,\gamma)=&(a+b)^4\gamma^4+4ab(a+b)(a-b)^2\gamma^3+2a^2b^2(a+b)(3a-5b)\gamma^2+4a^3b^3(a+b)\gamma+a^4b^4,
\\
\mathsf{G}_{5}(a,b,\gamma)=&(a^2-6ab+b^2)(a+b)^2\gamma^4+4ab(a-b)(a+b)^2\gamma^3
+2a^2b^2(3a^2+2ab+3b^2)\gamma^2
\\&
+4a^3b^3(a-b)\gamma+a^4b^4.
\end{align*}
The discriminants of these polynomials factorize as follows:
\begin{gather*}
	\mathcal{D_{\gamma}}\mathsf{G}_{3}=-2^{16}a^{16}b^{14}(8a^2+8ab+27b^2)(a+b)^4,
\\	
\mathcal{D_{\gamma}}\mathsf{G}_{4}=-2^{16}a^{14}b^{16}(27a^2+8ab+8b^2)(a+b)^4,
\\
\mathcal{D_{\gamma}}\mathsf{G}_{5}=2^{12}(32a^6-491a^5b-439a^4b^2+194a^3b^3-62a^2b^4-39ab^5+5b^6)(a+b)^3b^{15}a^{12}.
\end{gather*}

Using the transformation $(a,b)\mapsto (a,\hat{b})$, where $\hat{b}=\dfrac{b}{a}$, we get:
\begin{align*}
&\mathcal{D_{\gamma}}\mathsf{G}_{1}=16a^6\hat{b}^2(1+\hat{b}),
\\	
&\mathcal{D_{\gamma}}\mathsf{G}_{2}=16a^5\hat{b}^3(1+\hat{b}),
\\	&\mathcal{D_{\gamma}}\mathsf{G}_{3}=-2^{16}a^{36}\hat{b}^{14}(8+8\hat{b}+27\hat{b}^2)(1+\hat{b})^4,
\\
&\mathcal{D_{\gamma}}\mathsf{G}_{4}=-2^{16}a^{36}\hat{b}^{16}(27+8\hat{b}+8\hat{b}^2)(1+\hat{b})^4,
\\
&\mathcal{D_{\gamma}}\mathsf{G}_{5}=2^{12}a^{36}\hat{b}^{15}(32-491\hat{b}-439\hat{b}^2+194\hat{b}^3-62\hat{b}^4-39\hat{b}^5+5\hat{b}^6)(1+\hat{b})^3.
\end{align*}

\section{Polynomial equations}
\label{sec:polynomial}

Now we want to express the periodicity conditions for billiard trajectories in the Minkowski plane in terms of polynomial functions equations.
\begin{theorem}\label{th:polynomial}
The billiard trajectories within $\E$ with caustic $\C_{\gamma}$ are $n$-periodic if and only if there exists a pair of real polynomials $p_{d_1}$, $q_{d_2}$ of degrees $d_1$, $d_2$ respectively, and satisfying the following:
	\begin{itemize}
		\item[(a)] if $n=2m$ is even,  then $d_1=m$, $d_2=m-2$, and
		$$
		p_{m}^2(s)
		-
		s\left(s-\frac1a\right)\left(s+\frac1b\right)\left( s-\frac1{\gamma}\right)
		{q}_{m-2}^2(s)=1;
		$$
		\item[(b)] if $n=2m+1$ is odd, then $d_1=m$, $d_2=m-1$, and
		$$
		\left( s-\frac1{\gamma}\right)p_m^2(s)
		-
		s\left(s-\frac1a\right)\left(s+\frac1b\right)q_{m-1}^2(s)=-\sign\gamma.
		$$	
	\end{itemize}	
\end{theorem}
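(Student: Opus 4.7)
The plan is to invoke Theorem~\ref{th:curve-billiard} and translate the divisor equivalence $nQ_{0}\sim nQ_{\gamma}$ on $\Curve$ into Pell-type polynomial identities by passing to an auxiliary hyperelliptic model. The key change of coordinates is $s=1/x$ together with $v=s^{2}y/\sqrt{\varepsilon ab\gamma}$, which sends $\Curve$ birationally to the quartic curve
\[
v^{2}=s\Bigl(s-\tfrac{1}{a}\Bigr)\Bigl(s+\tfrac{1}{b}\Bigr)\Bigl(s-\tfrac{1}{\gamma}\Bigr).
\]
Under this map, the Weierstrass points $Q_{\infty},Q_{a},Q_{-b},Q_{\gamma}$ become the Weierstrass points $(0,0),(1/a,0),(-1/b,0),(1/\gamma,0)$, while the two points $Q_{0},Q_{0}'$ over $x=0$ become the two unramified points at infinity $\infty_{+},\infty_{-}$. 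Using the hyperelliptic relations $2Q_{\gamma}\sim2Q_{\infty}$ and $Q_{0}+Q_{0}'\sim2Q_{\infty}$ on $\Curve$ (equivalently $2(0,0)\sim2(1/\gamma,0)\sim\infty_{+}+\infty_{-}$ on the new curve), the condition $nQ_{0}\sim nQ_{\gamma}$ translates to $m\infty_{+}\sim m\infty_{-}$ when $n=2m$, and to $(m+1)\infty_{+}\sim m\infty_{-}+(1/\gamma,0)$ when $n=2m+1$.

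For $n=2m$, I construct a function realising this equivalence as
\[
\varphi(s)=p_{m}(s)+q_{m-2}(s)\,v.
\]
Since $s$ has a simple pole and $v$ has a double pole at each $\infty_{\pm}$, $\varphi$ has pole of order at most $m$ there. A single linear relation on the leading coefficients of $p_{m}$ and $q_{m-2}$ forces $\varphi$ to vanish to order $m$ at $\infty_{+}$ and to have pole of order $m$ at $\infty_{-}$, matching the required divisor. Its hyperelliptic conjugate $\bar\varphi=p_{m}-q_{m-2}v$ has the opposite divisor, so the product
\[
\varphi\bar\varphi=p_{m}^{2}-s\Bigl(s-\tfrac{1}{a}\Bigr)\Bigl(s+\tfrac{1}{b}\Bigr)\Bigl(s-\tfrac{1}{\gamma}\Bigr)\,q_{m-2}^{2}
\]
descends to $\mathbb{P}^{1}_{s}$ with no zeros or poles, hence equals a nonzero constant; an overall rescaling of $\varphi$ normalises this constant to $1$.

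For $n=2m+1$, I take the ansatz
\[
\varphi(s)=p_{m}(s)+q_{m-1}(s)\,\frac{v}{s-1/\gamma}.
\]
At the Weierstrass point $(1/\gamma,0)$, the coordinate $v$ is a local parameter and $s-1/\gamma$ vanishes to order two, so the second summand contributes a simple pole there; at each infinity its pole order is $m$. Computing the product with the conjugate gives
\[
\varphi\bar\varphi=\frac{(s-1/\gamma)\,p_{m}^{2}-s(s-1/a)(s+1/b)\,q_{m-1}^{2}}{s-1/\gamma},
\]
and imposing that the divisor of $\varphi$ is exactly $(m+1)\infty_{+}-m\infty_{-}-(1/\gamma,0)$ forces the numerator to be a nonzero constant. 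Tracking the factor $\varepsilon ab\gamma=ab|\gamma|$ from the rescaling $v=s^{2}y/\sqrt{\varepsilon ab\gamma}$, together with the factor $-\gamma$ coming from $x-\gamma=-\gamma(s-1/\gamma)/s$, normalises this constant to $-\sign\gamma$.

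The converse in both cases is immediate: given real polynomials $p_{d_{1}},q_{d_{2}}$ satisfying the stated identity, the function $\varphi$ defined above has precisely the prescribed divisor, so the required linear equivalence on $\Curve$ holds and Theorem~\ref{th:curve-billiard} yields $n$-periodicity. The main obstacle is the sign bookkeeping in the odd case, where the right-hand side must come out exactly as $-\sign\gamma$; the even case is a classical Pell--Abel construction adapted to the quartic Weierstrass form.
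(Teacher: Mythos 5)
Your proposal is correct and takes essentially the same route as the paper: both arguments produce a function of the form $p+q\sqrt{\varepsilon(a-x)(b+x)(\gamma-x)}$ (in your notation $p+qv$, resp.\ $p+q\,v/(s-1/\gamma)$) realising the divisor relation of Theorem~\ref{th:curve-billiard}, multiply by the hyperelliptic conjugate to obtain a polynomial with a single zero of maximal order, and then fix the constant — the paper simply works at $x=0$ on the cubic model and substitutes $s=1/x$ at the very end, whereas you pass to the quartic model first. One small inaccuracy: making $\varphi=p_m+q_{m-2}v$ vanish to order $m$ at $\infty_+$ is not ``a single linear relation on the leading coefficients'' but $2m$ linear conditions on the $2m$ available coefficients, whose nontrivial solvability is precisely the content of $m\infty_+\sim m\infty_-$ (equivalently, the Cayley determinant condition); since you invoke that equivalence beforehand, the existence of $\varphi$ in the stated form follows from the Riemann--Roch basis of $\mathcal{L}(m\infty_++m\infty_-)$ and the argument is unaffected.
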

\begin{proof}
We note first that the  proof of Theorem \ref{th:cayley-billiard} implies that there is a non-trivial linear combination of the bases \eqref{eq:basis-even} for $n$ even, or \eqref{eq:basis-odd} for $n$ odd,  with the zero of order $n$ at $x=0$.
	
	(a) For $n=2m$, from there we get that there are real polynomials $p_m^*(x)$ and $q_{m-2}^*(x)$ of degrees $m$ and $m-2$ respectively, such that the expression
	$$
	p_{m}^*(x)-q_{m-2}^*(x)\sqrt{\varepsilon(a-x)(b+x)(\gamma-x)}
	$$
	has a zero of order $2m$ at $x=0$.
	Multiplying that expression by
	$$
	p_{m}^*(x)+q_{m-2}^*(x)\sqrt{\varepsilon(a-x)(b+x)(\gamma-x)},
	$$
	we get that the polynomial $(p_{m}^*(x))^2-\varepsilon(a-x)(b+x)(\gamma-x)(q_{m-2}^*(x))^{2}$ has a zero of order $2m$ at $x=0$.
	Since the degree of that polynomial is $2m$, is follows that:
	$$
	(p_{m}^*(x))^2-\varepsilon(a-x)(b+x)(\gamma-x)(q_{m-2}^*(x))^{2}=cx^{2m},
	$$
	for some constant $c$.
	Notice that $c$ is positive, since it equals the square of the leading coefficient of $p_m^*$.
	Dividing the last relation by $cx^{2m}$ and introducing $s=1/x$, we get the requested relation.
	
	(b) On the other hand, for $n=2m+1$, we get
	that there are real polynomials $p_m^*(x)$ and $q_{m-1}^*(x)$ of degrees $m$ and $m-1$ respectively, such that the expression
	$$
	p_{m}^*(x)-q_{m-1}^*(x)\frac{\sqrt{\varepsilon(a-x)(b+x)(\gamma-x)}}{\gamma-x}
	$$
	has a zero of order $2m+1$ at $x=0$.
	Multiplying that expression by
	$$
	(\gamma-x)
	\left(
	p_{m}^*(x)+q_{m-1}^*(x)\frac{\sqrt{\varepsilon(a-x)(b+x)(\gamma-x)}}{\gamma-x}
	\right)
	,
	$$
	we get that the polynomial $(\gamma-x)(p_{m}^*(x))^2-\varepsilon(a-x)(b+x)(q_{m-1}^*(x))^{2}$ has a zero of order $2m+1$ at $x=0$.
	Since the degree of that polynomial is $2m+1$, is follows that:
	$$
	(\gamma-x)(p_{m}^*(x))^2-\varepsilon(a-x)(b+x)(q_{m-1}^*(x))^{2}=cx^{2m+1},
	$$
	for some constant $c$.
	Notice that $c$ is negative, since it equals the opposite of the square of the leading coefficient of $p_m^*$.
	Dividing the last relation by $-\varepsilon cx^{2m+1}$ and introducing $s=1/x$, we get the requested relation.
\end{proof}	

\begin{corollary}\label{cor:pell-periodic}
	If the billiard trajectories within $\E$ with caustic $\C_{\gamma}$ are $n$-periodic, then there exist real polynomials $\hat{p}_n$ and $\hat{q}_{n-2}$ of degrees $n$ and $n-2$ respectively, which satisfy the Pell  equation:
	\begin{equation}\label{eq:pell}
	\hat{p}_{n}^2(s)-s\left(s-\frac1a\right)\left(s+\frac1b\right)\left( s-\frac1{\gamma}\right)\hat{q}_{n-2}^2(s)=1.
	\end{equation}
\end{corollary}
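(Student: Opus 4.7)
The plan is to read the Pell equation directly off Theorem~\ref{th:polynomial} by applying the classical Pell-doubling identity $(X+Y)^2 - (X-Y)^2 = 4XY$ to the polynomial relations already supplied there, separately in the even and odd cases. Throughout I write $P(s) = s\bigl(s-\tfrac1a\bigr)\bigl(s+\tfrac1b\bigr)\bigl(s-\tfrac1\gamma\bigr)$, so that the right-hand side in \eqref{eq:pell} is $P$.

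For $n = 2m$ even, Theorem~\ref{th:polynomial}(a) already delivers polynomials $p_m$, $q_{m-2}$ of degrees $m$, $m-2$ satisfying $p_m^2 - P q_{m-2}^2 = 1$. The corollary asks for a solution of degrees $n$ and $n-2$, which is twice what the theorem gives. I upgrade the degrees by applying the doubling identity with $X = p_m^2$ and $Y = P q_{m-2}^2$: since $X - Y = 1$, the identity becomes
\[
\bigl(p_m^2 + P q_{m-2}^2\bigr)^2 - P \bigl(2 p_m q_{m-2}\bigr)^2 = 1,
\]
so I set $\hat{p}_n := p_m^2 + P q_{m-2}^2$ and $\hat{q}_{n-2} := 2 p_m q_{m-2}$. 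A leading-coefficient check confirms $\deg \hat p_n = 2m = n$: both summands have degree $2m$ and their leading coefficients are the squares of the (nonzero) leading coefficients of $p_m$ and $q_{m-2}$, hence strictly positive and non-cancelling. Likewise $\deg \hat{q}_{n-2} = m + (m-2) = n-2$.

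For $n = 2m+1$ odd, Theorem~\ref{th:polynomial}(b) supplies $p_m$ of degree $m$ and $q_{m-1}$ of degree $m-1$ with
\[
A\, p_m^2 - R\, q_{m-1}^2 = -\sign\gamma,
\]
where $A(s) := s - \tfrac1\gamma$ and $R(s) := s\bigl(s-\tfrac1a\bigr)\bigl(s+\tfrac1b\bigr)$, so that $P = AR$. I now apply the same doubling identity with $X = A p_m^2$ and $Y = R q_{m-1}^2$: the square of the right-hand side is $(\sign\gamma)^2 = 1$, so
\[
\bigl(A p_m^2 + R q_{m-1}^2\bigr)^2 - P \bigl(2 p_m q_{m-1}\bigr)^2 = 1,
\]
and I take $\hat p_n := A p_m^2 + R q_{m-1}^2$, $\hat q_{n-2} := 2 p_m q_{m-1}$. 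The two summands of $\hat p_n$ each have degree $1 + 2m = 3 + 2(m-1) = n$, and their leading coefficients are again squares of nonzero reals, so they are strictly positive and cannot cancel; hence $\deg \hat p_n = n$. Finally $\deg \hat q_{n-2} = m + (m-1) = n - 2$.

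I do not expect a genuine obstacle: the entire argument reduces to the Pell-doubling identity plus two degree checks. The only point requiring a moment of attention is the non-cancellation of leading coefficients in $\hat p_n$, which is immediate because in both parity cases one is summing two polynomials whose leading coefficients are squares, and so necessarily of the same (positive) sign.
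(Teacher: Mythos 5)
Your proposal is correct and takes essentially the same route as the paper: the paper also obtains the Pell solution by doubling the polynomials of Theorem~\ref{th:polynomial}, setting $\hat{p}_n=2p_m^2-1$, $\hat{q}_{n-2}=2p_mq_{m-2}$ for $n=2m$ (which is literally your $p_m^2+Pq_{m-2}^2$, since $p_m^2-1=Pq_{m-2}^2$) and $\hat{p}_n=2\bigl(s-\tfrac1\gamma\bigr)p_m^2+\sign\gamma$, $\hat{q}_{n-2}=2p_mq_{m-1}$ for $n=2m+1$ (your $Ap_m^2+Rq_{m-1}^2$; the paper's printed factor $\gamma s-1$ in place of $s-\tfrac1\gamma$ appears to be a typo, and your version is the one that actually satisfies \eqref{eq:pell}). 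Your added degree and leading-coefficient checks are correct and only make explicit what the paper leaves implicit.
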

\begin{proof}
	For $n=2m$, take $\hat{p}_n=2p_{m}^2-1$ and $\hat{q}_{n-2}=2p_mq_{m-2}$.
	For $n=2m+1$, we set $\hat{p}_n=2\left(\gamma s-1\right)p_{m}^2+\sign\gamma$ and $\hat{q}_{n-2}=2p_mq_{m-1}$.
\end{proof}

\begin{theorem}\label{th:polynomial-elliptic}
	The billiard trajectories within $\E$ with caustic $\C_{\gamma}$ are elliptic $n$-periodic without being $n$-periodic if and only if there exists a pair of real polynomials $p_{d_1}$, $q_{d_2}$ of degrees $d_1$, $d_2$ respectively, and satisfying the following:
	\begin{itemize}
		\item[(a)] $\C_{\gamma}$ is an ellipse, $0<\gamma<a$, and
		\begin{itemize}
			\item $n=2m$ is even, $d_1=d_2=m-1$,
			$$
			s\left(s-\frac1a\right)p_{m-1}^2(s)
			-\left(s+\frac1b\right)\left( s-\frac1{\gamma}\right)q_{m-1}^2(s)=1;
			$$
			\item $n=2m+1$ is odd, $ d_1=m$, $d_2=m-1$,
			$$
			\left(s+\frac1b\right) p_{m}^2(s)
			-s\left(s-\frac1a\right)\left( s-\frac1{\gamma}\right)q_{m-1}^2(s)=1;
			$$
		\end{itemize}

\item[(b)] $\C_{\gamma}$ is an ellipse, $-b<\gamma<0$, and
\begin{itemize}
	\item $n=2m$ is even, $d_1=d_2=m-1$,
	$$
	s\left(s+\frac1b\right)p_{m-1}^2(s)
	-\left(s-\frac1a\right)\left( s-\frac1{\gamma}\right)q_{m-1}^2(s)=1;
	$$
	\item $n=2m+1$ is odd, $ d_1=m$, $d_2=m-1$,
	$$
	\left(s-\frac1a\right) p_{m}^2(s)
	-s\left(s+\frac1b\right)\left(s-\frac{1}{\gamma}\right)q_{m-1}^2(s)=-1;
	$$
\end{itemize}
		
		\item[(c)] $\C_{\gamma}$ is a hyperbola and $n=2m$ is even, $d_1=d_2=m-1$,
			$$
			\left( s-\frac{1}{\gamma}\right)p_{m-1}^2(s)
			-s\left(s-\frac1a\right)\left(s+\frac1b\right)q_{m-1}^2(s)=-\sign\gamma;
			$$

		\item[(d)] $\C_{\gamma}$ is a hyperbola, $n=2m+1$ is odd, $d_1=m$, $d_2=m-1$,
		$$
		\left(s-\frac1a\right) p_{m}^2(s)
		-s\left(s+\frac1b\right)\left(s-\frac1{\gamma}\right)q_{m-1}^2(s)=-1;
		$$
	\item[(e)] $\C_{\gamma}$ is a hyperbola, $n=2m+1$ is odd, $d_1=m$, $d_2=m-1$,
$$
\left(s+\frac1b\right) p_{m}^2(s)
-s\left(s-\frac1a\right)\left(s-\frac1{\gamma}\right)q_{m-1}^2(s)=1.
$$
	\end{itemize}	
\end{theorem}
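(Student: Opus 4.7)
The plan is to mirror the proof of Theorem~\ref{th:polynomial}, tackling each of the five cases of Theorem~\ref{th:elliptic-periodic} in parallel. In every case the recipe will be: (i) invoke the divisor equivalence on $\Curve$ from Theorem~\ref{th:elliptic-periodic}, reducing it, via $2Q_\gamma\sim 2Q_\infty$ and $Q_a+Q_{-b}+Q_\gamma\sim 3Q_\infty$, to the canonical form used in the proof of Theorem~\ref{th:elliptic-cayley}; (ii) realise the corresponding element of the relevant $\mathcal{L}(D)$ as an explicit linear combination of basis elements, which will take the form $P^*(x)-Q^*(x)R(x,y)$ with $R$ equal to one of $y/(x-a)$, $y/(x+b)$, or $y/(x-\gamma)$ according to the ``extra'' simple pole in $D$; (iii) square via the hyperelliptic conjugate, substitute $y^2=\varepsilon(a-x)(b+x)(\gamma-x)$, clear the denominator introduced in step (ii), and translate the resulting polynomial identity in $x$ into the variable $s=1/x$ by polynomial reversal.

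To illustrate the mechanism, consider case (c) with $n=2m$ even, where $nQ_0\sim(n-1)Q_\infty+Q_\gamma$, so that a non-trivial element of $\mathcal{L}((n-1)Q_\infty+Q_\gamma)$ vanishing to order $n$ at $Q_0$ exists and is of the form $P^*(x)-Q^*(x)y/(x-\gamma)$ with $\deg P^*,\deg Q^*\le m-1$. Multiplying by the conjugate expression and clearing $(x-\gamma)$ will yield
\[
(x-\gamma)P^{*2}(x)+\varepsilon(a-x)(b+x)Q^{*2}(x)=c\,x^n,
\]
with the sign of $c$ determined by the leading term to be $\sign c=-\varepsilon$. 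I would then divide by $cx^n$, substitute $s=1/x$, pass to the reversed polynomials of the same degrees $m-1$, and rescale so as to absorb the positive constants $|\gamma|$, $ab$, and $|c|$ into the polynomials themselves; this will produce the Pell-type identity of the theorem with the asserted constant $-\sign\gamma$ on the right-hand side forced precisely by $\sign c=-\varepsilon$. The remaining cases (a), (b), (d), (e) will follow the same template, with only cosmetic changes: $R$ becomes $y/(x-a)$ or $y/(x+b)$ according as the extra simple pole in the target divisor is $Q_{-b}$ or $Q_a$, and the parity split between $\deg P^*$ and $\deg Q^*$ is the one already seen in Theorem~\ref{th:polynomial}. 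The converse direction I would obtain simply by reading the same construction backwards: a polynomial identity of the stated form factors over the hyperelliptic extension into an element of $\mathcal{L}(D)$ with zero of order $n$ at $Q_0$, recovering the divisor equivalence of Theorem~\ref{th:elliptic-periodic}; the hypothesis \emph{without being $n$-periodic} corresponds to the distinguished parities of $n_1,n_2$ singled out in that proof, without which one would be in the regime of Theorem~\ref{th:polynomial} instead.

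The hard part will be bookkeeping rather than any real conceptual obstacle. Five sub-cases must be tracked, each with its own parity of $n$, its own position of $\gamma$ relative to $a$ and $-b$, and its own value $\varepsilon=\sign\gamma$; the precise constant on the right-hand side of each identity ($\pm 1$ or $\pm\sign\gamma$) is pinned down only after a leading-coefficient sign analysis together with the final positive rescaling, and it is exactly here, at the interface between the geometry of $\Curve$ and the arithmetic of the polynomial identity, that a careless sign would invalidate the statement.
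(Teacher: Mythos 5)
Your overall strategy --- reduce each divisor relation of Theorem \ref{th:elliptic-periodic}, via $2Q_\gamma\sim 2Q_\infty$ and $Q_a+Q_{-b}+Q_\gamma\sim 3Q_\infty$, to a divisor of the form $(n-1)Q_\infty+Q_\bullet$, realise the element of $\mathcal{L}(D)$ as $P^*(x)-Q^*(x)\,y/(x-\bullet)$, multiply by the hyperelliptic conjugate, clear the denominator, and invert through $s=1/x$ --- is exactly the paper's, and in cases (a), (b), (d), (e) it lands on the stated identities.

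The difficulty is the one case you actually work out. In case (c) with $n=2m$ your setup is right ($\deg P^*,\deg Q^*\le m-1$ and $\sign c=-\varepsilon$ in $(x-\gamma)P^{*2}+\varepsilon(a-x)(b+x)Q^{*2}=cx^{2m}$), but the conclusion you assert does not follow from it. Carrying the substitution through: $(x-\gamma)/x^{2m}=-\gamma\,s^{2m-1}\left(s-\tfrac1\gamma\right)$ while $P^{*2}(1/s)=\tilde P^{2}(s)/s^{2m-2}$, so the first term acquires an \emph{extra factor} $s$; and $(a-x)(b+x)/x^{2m}=ab\left(s-\tfrac1a\right)\left(s+\tfrac1b\right)s^{-2}$ combines with $Q^{*2}(1/s)=\tilde Q^{2}(s)/s^{2m-2}$ to give no factor $s$ at all. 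Since $\sign c=-\varepsilon$, the coefficient $-\gamma/c$ is positive and $\varepsilon ab/c$ is negative, so after rescaling your construction yields
$$
s\left(s-\frac1{\gamma}\right)p_{m-1}^2(s)-\left(s-\frac1a\right)\left(s+\frac1b\right)q_{m-1}^2(s)=1,
$$
and not the identity $\left(s-\tfrac1\gamma\right)p_{m-1}^2-s\left(s-\tfrac1a\right)\left(s+\tfrac1b\right)q_{m-1}^2=-\sign\gamma$ that you claim is ``forced'' by $\sign c=-\varepsilon$. The latter cannot be the output of your construction under any sign bookkeeping: its two terms have degrees $2m-1$ and $2m+1$, so their difference is never a nonzero constant when $q_{m-1}$ has exact degree $m-1$ --- a degree count that should have flagged the target before any sign analysis. (A check at $n=2$, where the caustic is $\gamma=ab/(b-a)$, confirms the displayed identity with $p_0^2=q_0^2=ab$.) For what it is worth, the paper's own treatment of case (c) is also out of step with its statement --- it manipulates $p_m^*$ of degree $m$ and a zero of order $2m+1$, which belongs to the odd-period situation --- but that does not rescue your write-up: you promised that the computation delivers the stated constant $-\sign\gamma$, it delivers $+1$ with the linear factors distributed differently, and this is precisely the sign/bookkeeping interface you yourself identified as the place where a slip ``would invalidate the statement.''
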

\begin{proof}
	(a) For $n=2m$, the proof of Theorem \ref{th:elliptic-cayley} implies that there are polynomials $p_{m-1}^*(x)$ and $q_{m-1}^*(x)$ of degrees $m-1$, such that the expression
	$$
	p_{m-1}^*(x)-q_{m-1}^*(x)\frac{\sqrt{(a-x)(b+x)(\gamma-x)}}{a-x}
	$$
	has a zero of order $2m$ at $x=0$.
	Multiplying that expression by
	$$
	(a-x)\left(p_{m-1}^*(x)+q_{m-1}^*(x)\frac{\sqrt{(a-x)(b+x)(\gamma-x)}}{a-x}\right),
	$$
	we get that the polynomial
	$(a-x)(p_{m-1}^*(x))^2-(a-x)(b+x)(q_{m-1}^*(x))^2$ has a zero of order $2m$ at $x=0$.
	Since the degree of that polynomial is $2m$, is follows that:
	$$
	(a-x)(p_{m-1}^*(x))^2-(b+x)(\gamma-x)(q_{m-1}^*(x))^2=cx^{2m},
	$$
	for some constant $c$.
	Notice that $c$ is positive, since it equals the square of the leading coefficient of $q_{m-1}^*$.
	Dividing the last relation by $cx^{2m}$ and introducing $s=1/x$, we get the requested relation.
	
	For $n=2m+1$, the proof of Theorem \ref{th:elliptic-cayley} implies that there are polynomials $p_{m}^*(x)$ and $q_{m-1}^*(x)$ of degrees $m$ and $m-1$, such that the expression
	$$
	p_{m}^*(x)-q_{m-1}^*(x)\frac{\sqrt{(a-x)(b+x)(\gamma-x)}}{b+x}
	$$
	has a zero of order $2m+1$ at $x=0$.
	Multiplying that expression by
	$$
	(b+x)\left(p_{m}^*(x)+q_{m-1}^*(x)\frac{\sqrt{(a-x)(b+x)(\gamma-x)}}{b+x}\right)
	,
	$$
	we get that the polynomial
	$(b+x)(p_{m}^*(x))^2-(a-x)(\gamma-x)(q_{m-1}^*(x))^2$ has a zero of order $2m+1$ at $x=0$.
	Since the degree of that polynomial is $2m+1$, is follows that:
	$$
	(b+x)(p_{m}^*(x))^2-(a-x)(\gamma-x)(q_{m-1}^*(x))^2=cx^{2m+1}
	$$
	for some constant $c$.
	Notice that $c$ is positive, since it equals the square of the leading coefficient of $p_{m}^*$.
	Dividing the last relation by $cx^{2m+1}$ and introducing $s=1/x$, we get the requested relation.
	
(b) For $n=2m$, the proof of Theorem \ref{th:elliptic-cayley} implies that there are real polynomials $p_{m-1}^*(x)$ and $q_{m-1}^*(x)$ of degrees $m-1$, such that the expression
	$$
	p_{m-1}^*(x)-q_{m-1}^*(x)\frac{\sqrt{-(a-x)(b+x)(\gamma-x)}}{b+x}
	$$
	has a zero of order $2m$ at $x=0$.
	Multiplying that expression by
	$$
	(b+x)\left(p_{m-1}^*(x)+q_{m-1}^*(x)\frac{\sqrt{-(a-x)(b+x)(\gamma-x)}}{b+x}\right),
	$$
	we get that the polynomial
	$(b+x)(p_{m-1}^*(x))^2+(a-x)(\gamma-x)(q_{m-1}^*(x))^2$ has a zero of order $2m$ at $x=0$.
	Since the degree of that polynomial is $2m$, is follows that:
	$$
	(b+x)(p_{m-1}^*(x))^2+(a-x)(\gamma-x)(q_{m-1}^*(x))^2=cx^{2m},
	$$
	for some constant $c$.
	Notice that $c$ is positive, since it equals to the square of the leading coefficient of $q_{m-1}^*$.
	Dividing the last relation by $cx^{2m}$ and introducing $s=1/x$, we get the requested relation.
	
For $n=2m+1$, the proof of Theorem \ref{th:elliptic-cayley} implies that there are polynomials $p_{m}^*(x)$ and $q_{m-1}^*(x)$ of degrees $m$ and $m-1$, such that the expression
	$$
	p_{m}^*(x)-q_{m-1}^*(x)\frac{\sqrt{-(a-x)(b+x)(\gamma-x)}}{a-x}
	$$
	has a zero of order $2m+1$ at $x=0$.
	Multiplying that expression by
	$$
	(a-x)\left(p_{m}^*(x)+q_{m-1}^*(x)\frac{\sqrt{-(a-x)(b+x)(\gamma-x)}}{a-x}\right)
	,
	$$
	we get that the polynomial
	$(a-x)(p_{m}^*(x))^2+(b+x)(\gamma-x)(q_{m-1}^*(x))^2$ has a zero of order $2m+1$ at $x=0$.
	Since the degree of that polynomial is $2m+1$, is follows that:
	$$
	(a-x)(p_{m}^*(x))^2+(b+x)(\gamma-x)(q_{m-1}^*(x))^2=cx^{2m+1}
	$$
	for some constant $c$.
	Notice that $c$ is negative, since it is opposite to the square of the leading coefficient of $p_{m}^*$.
	Dividing the last relation by $-cx^{2m+1}$ and introducing $s=1/x$, we get the requested relation.
	
For (c), the proof of Theorem \ref{th:elliptic-cayley} implies that there are polynomials real $p_{m}^*(x)$ and $q_{m-1}^*(x)$ of degrees $m$ and $m-1$, such that the expression
	$$
	p_{m}^*(x)-q_{m-1}^*(x)\frac{\sqrt{\varepsilon(a-x)(b+x)(\gamma-x)}}{\gamma-x}
	$$
	has a zero of order $2m+1$ at $x=0$.
	Multiplying that expression by
	$$
	(\gamma-x)\left(p_{m}^*(x)+q_{m-1}^*(x)\frac{\sqrt{\varepsilon(a-x)(b+x)(\gamma-x)}}{\gamma-x}\right)
	,
	$$
	we get that the polynomial
	$(\gamma-x)(p_{m}^*(x))^2-\varepsilon(a-x)(b+x)(q_{m-1}^*(x))^2$ has a zero of order $2m+1$ at $x=0$.
	Since the degree of that polynomial is $2m+1$, is follows that:
	$$
	(\gamma-x)(p_{m}^*(x))^2-\varepsilon(a-x)(b+x)(q_{m-1}^*(x))^2=cx^{2m+1}
	$$
	for some constant $c$.
	Notice that $c$ is negative, since it is opposite to the square of the leading coefficient of $p_{m}^*$.
	Dividing the last relation by $-\varepsilon cx^{2m+1}$ and introducing $s=1/x$, we get the requested relation.

(d)
The proof of Theorem \ref{th:elliptic-cayley} implies that there are real polynomials $p_{m}^*(x)$ and $q_{m-1}^*(x)$ of degrees $m$ and $m-1$, such that the expression
$$
p_{m}^*(x)-q_{m-1}^*(x)\frac{\sqrt{\varepsilon(a-x)(b+x)(\gamma-x)}}{a-x}
$$
has a zero of order $2m+1$ at $x=0$.
Multiplying that expression by
$$
(a-x)\left(p_{m}^*(x)+q_{m-1}^*(x)\frac{\sqrt{\varepsilon(a-x)(b+x)(\gamma-x)}}{a-x}\right)
,
$$
we get that the polynomial
$(a-x)(p_{m}^*(x))^2-\varepsilon(b+x)(\gamma-x)(q_{m-1}^*(x))^2$ has a zero of order $2m+1$ at $x=0$.
Since the degree of that polynomial is $2m+1$, is follows that:
$$
(a-x)(p_{m}^*(x))^2-\varepsilon(b+x)(\gamma-x)(q_{m-1}^*(x))^2=cx^{2m+1}
$$
for some constant $c$.
Notice that $c$ is negative, since it is opposite to the square of the leading coefficient of $p_{m}^*$.
Dividing the last relation by $-cx^{2m+1}$ and introducing $s=1/x$, we get the requested relation.

(e) For $n=2m+1$, the proof of Theorem \ref{th:elliptic-cayley} implies that there are real polynomials $p_{m}^*(x)$ and $q_{m-1}^*(x)$ of degrees $m$ and $m-1$, such that the expression
$$
p_{m}^*(x)-q_{m-1}^*(x)\frac{\sqrt{\varepsilon(a-x)(b+x)(\gamma-x)}}{b+x}
$$
has a zero of order $2m+1$ at $x=0$.
Multiplying that expression by
$$
(b+x)\left(p_{m}^*(x)+q_{m-1}^*(x)\frac{\sqrt{\varepsilon(a-x)(b+x)(\gamma-x)}}{b+x}\right)
,
$$
we get that the polynomial
$(b+x)(p_{m}^*(x))^2-\varepsilon(a-x)(\gamma-x)(q_{m-1}^*(x))^2$ has a zero of order $2m+1$ at $x=0$.
Since the degree of that polynomial is $2m+1$, is follows that:
$$
(b+x)(p_{m}^*(x))^2-\varepsilon(a-x)(\gamma-x)(q_{m-1}^*(x))^2=cx^{2m+1}
$$
for some constant $c$.
Notice that $c$ is positive, since it equals the square of the leading coefficient of $p_{m}^*$.
Dividing the last relation by $cx^{2m+1}$ and introducing $s=1/x$, we get the requested relation.
\end{proof}	
After corollary \ref{cor:pell-periodic} and relation \refeq{eq:pell}, we see that the Pell equations arise as the functional polynomial conditions for periodicity. Let us recall some important properties of the solutions of the Pell equations.

\section{Classical Extremal Polynomials and Caustics}
\label{sec:extremal}

\subsection{Fundamental Properties of Extremal Polynomials}
From the previous section we know that the Pell equation plays a key role in functional-polynomial formulation of periodicity conditions in the Minkowski plane. The solutions of the Pell equation are so-called extremal polynomials.
Denote $\{c_1,c_2,c_3,c_4\}=\{0, 1/a, -1/b, 1/\gamma\}$ with the ordering $c_{1}<c_{2}<c_{3}<c_{4}$.
The polynomials $\hat {p}_n$ are so called \emph{generalized Chebyshev polynomials} on two intervals $[c_1, c_2]\cup [c_3, c_4]$, with an appropriate normalization. Namely, one can consider the question of finding the monic polynomial of certain degree  $n$ which minimizes the maximum norm on the union of two intervals. Denote such a polynomial as $\hat P_n$ and its norm $L_n$. The fact that polynomial $\hat {p}_n$ is a solution of the Pell equation on the union of intervals $[c_1, c_2]\cup [c_3, c_4]$ is equivalent to the
following conditions:
\begin{itemize}
	\item[(i)] $\hat {p}_n=\hat {P}_n/\pm L_n$
	\item[(ii)] the set $[c_1, c_2]\cup [c_3, c_4]$ is the maximal subset of $\mathbf R$ for which $\hat {P}_n$
	is the minimal polynomial in the sense above.
\end{itemize}

Chebyshev was the first who considered a similar problem on one interval, and this was how celebrated Chebyshev polynomials emerged in XIXth century.
Let us recall a fundamental result about generalized Chebyshev polynomials \cite{AhiezerAPPROX, Akh4}.

\begin{theorem}[A corollary of the Krein-Levin-Nudelman Theorem \cite{KLN1990}]
	A polynomial  $P_{n}$ of degree $n$ satisfies a Pell equation on the union of intervals $[c_{1}, c_{2}]\cup[c_{3}, c_{4}]$ if and only if there exists an integer $n_{1}$ such that the equation holds:
	\begin{equation}\label{eq:KLN}
	n_{1}\int_{c_{2}}^{c_{3}}\hat f(s)ds=n\int_{c_{4}}^{\infty}\hat f(s)ds, \quad \hat f(s)=\frac{C}{\sqrt{\prod_{i=1}^4(s-c_i)}}.
	\end{equation}
(Here $C$ is a nonessential constant.)
	The modulus of the polynomial reaches its maximal values $L_{n}$ at the points $c_{i}:$ $|P_{n}(c_{i})|=L_{n}$.\\
	In addition, there are exactly $\tau_{1}=n-n_{1}-1$ internal extremal points of the interval $[c_{3}, c_{4}]$ where $|P_{n}|$ reaches the value $L_{n}$, and there are $\tau_{2}=n_{1}-1$ internal extremal points of $[c_{1}, c_{2}]$ with the same property.
\end{theorem}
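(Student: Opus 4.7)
The plan is to reformulate the Pell equation as a torsion condition on the Jacobian of the elliptic curve $\mathcal{E}: y^2 = \prod_{i=1}^4 (s - c_i)$, and then translate this condition into the stated integral identity via the Abel--Jacobi map. First I would argue that the Pell equation $\hat{P}_n^2(s) - \prod_{i=1}^4(s-c_i)\,\hat{Q}_{n-2}^2(s) = L_n^2$ holds if and only if, on $\mathcal{E}$, the meromorphic function $\hat{P}_n(s) + y\,\hat{Q}_{n-2}(s)$ has its divisor supported only at the two points $\infty^+, \infty^-$ lying over $s = \infty$, with zero of order $n$ at one and pole of order $n$ at the other. This is the usual factorization argument: $(\hat{P}_n - y\hat{Q}_{n-2})(\hat{P}_n + y\hat{Q}_{n-2}) = L_n^2$ being constant forces each factor to be a unit on the affine part of $\mathcal{E}$. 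Equivalently, $\infty^+ - \infty^-$ is an $n$-torsion element in $\mathrm{Jac}(\mathcal{E})$.

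Next I would introduce the unique (up to scale) holomorphic differential $\hat f(s)\,ds = C\,ds/\sqrt{\prod_{i=1}^4(s-c_i)}$ on $\mathcal{E}$ and fix a basis of real homology: an $a$-cycle encircling $[c_1,c_2]$, and a $b$-cycle joining the two real bands. Their periods are, up to a real multiple, $\int_{c_2}^{c_3} \hat f\,ds$ and $\int_{c_1}^{c_2} \hat f\,ds$ respectively. A direct computation with the Abel--Jacobi map, using a path from $\infty^-$ to $\infty^+$ through $c_4$, identifies the image of $\infty^+ - \infty^-$ with a real multiple of $\int_{c_4}^{\infty} \hat f\,ds$ modulo the period lattice. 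The $n$-torsion condition $n[\infty^+ - \infty^-] \equiv 0$ then asserts equality of $n\int_{c_4}^{\infty}\hat f\,ds$ with an integer combination of the periods, and selecting the $a$-period yields precisely the integer $n_1$ in \eqref{eq:KLN}.

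For the extremal-point count, I would use the Abelian integral $\theta(s) = \int_{c_4}^{s} \hat f\,dt$ as a uniformizing parameter on each real arc. On $[c_1,c_2]\cup[c_3,c_4]$ the Pell equation gives $\hat{P}_n(s)/L_n = \cos\bigl(n\theta(s) + \varphi\bigr)$ for a suitable phase $\varphi$, so interior extremal points of $|\hat{P}_n|$ on each band correspond to integer multiples of $\pi$ traversed by $n\theta+\varphi$ across that band. The total count across both bands is $n-1$, and the integral identity partitions it as $\tau_2 = n_1-1$ on $[c_1,c_2]$ and $\tau_1 = n-n_1-1$ on $[c_3,c_4]$; simultaneously $|\hat{P}_n(c_i)| = L_n$ at the endpoints because differentiating the Pell equation and evaluating at $s = c_i$ forces $\hat{P}_n'(c_i) = 0$ or $\hat{P}_n(c_i) = 0$, and the latter is excluded by degree considerations.

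The main obstacle is the bookkeeping of signs, branch choices, and orientations that ensures the integer $n_1$ appearing in the torsion relation coincides with the number of interior extrema on $[c_1,c_2]$ (as opposed to $[c_3,c_4]$), together with checking that the constant $C$ in $\hat f$ is normalized consistently across the two steps. A secondary technical point is handling the two infinite points $\infty^\pm$ of the genus-one curve defined by a quartic: one must verify that the chosen differential is indeed holomorphic there and that the path chosen for the Abel--Jacobi evaluation lies in the correct homology class. Once these normalizations are pinned down, both directions of the equivalence and the extremal-point enumeration follow from the same Abel--Jacobi dictionary.
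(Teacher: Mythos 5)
The paper does not actually prove this theorem: it is stated as a corollary of the Krein--Levin--Nudelman theorem and justified only by the citation to \cite{KLN1990}. Your Abel--Jacobi argument therefore supplies a proof where the paper supplies none, and it is the standard algebro-geometric route to this classical fact: Pell equation $\Leftrightarrow$ the divisor of $\hat{P}_n+y\hat{Q}_{n-2}$ is $n\infty^--n\infty^+$ $\Leftrightarrow$ $\infty^+-\infty^-$ is $n$-torsion, and the reality of $\int_{c_4}^{\infty}\hat f\,ds$ forces the torsion relation to involve only the real period $\int_{c_2}^{c_3}\hat f\,ds$, which is exactly \eqref{eq:KLN}. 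The outline is sound, including the cosine uniformization $\hat{P}_n/L_n=\cos(n\theta+\varphi)$ on the bands for the extremal-point count. For the converse direction you should make explicit that a function with divisor $n\infty^--n\infty^+$ on the real curve can be written as $P+yQ$ with \emph{real} polynomials of degrees $n$ and $n-2$ (reality follows by averaging with the complex conjugate and the hyperelliptic involution); as written this direction is only implicit in the word ``equivalently.''

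Two local slips should be fixed. First, the endpoint values: setting $s=c_i$ in $\hat{P}_n^2-\prod_j(s-c_j)\hat{Q}_{n-2}^2=L_n^2$ gives $\hat{P}_n(c_i)^2=L_n^2$ immediately, since the quartic vanishes there; your differentiation argument does not yield this, and the dichotomy ``$\hat{P}_n'(c_i)=0$ or $\hat{P}_n(c_i)=0$'' is not what differentiation produces (one gets $2\hat{P}_n(c_i)\hat{P}_n'(c_i)=\bigl(\prod_j(s-c_j)\bigr)'\big|_{c_i}\hat{Q}_{n-2}(c_i)^2$, which is generally nonzero). Second, the total number of interior extremal points on the two bands is $n-2$, not $n-1$ (one of the $n-1$ critical points of $\hat{P}_n$ lies in the gap $(c_2,c_3)$, where $|\hat{P}_n|\ge L_n$); note that your own partition $\tau_1+\tau_2=(n-n_1-1)+(n_1-1)=n-2$ already contradicts the claimed total of $n-1$. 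Neither error affects the viability of the approach.
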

\begin{definition}[\cite{RR2014,DragRadn2018,DragRadn2019rcd}]
	We call the pair $(n, n_1)$ the partition and $(\tau_1, \tau_2)$ the signature of the generalized Chebyshev polynomial $P_n$.
\end{definition}
Now we are going to formulate and prove the main result of this Section, which relates $n_1, n_2$ the numbers of reflections off relativistic ellipses and off relativistic hyperbolas respectively with the partition and the signature of the related solution of a Pell equation.

\begin{theorem}\label{th:impactwinding}
	Given a periodic billiard trajectory with period $n=n_{1}+n_{2}$, where $n_{1}$ is the number of reflections off relativistic ellipses, $n_{2}$ the number of reflections off the relativistic hyperbolas, then the partition corresponding to this trajectory is $(n, n_{1})$. The corresponding extremal polynomial $\hat{p}_{n}$ of degree $n$ has $n_{1}-1$ internal extremal points in the first interval and $n-n_{1}-1=n_{2}-1$ internal extremal points in the second interval.
\end{theorem}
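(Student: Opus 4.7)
The plan is to match the integral identity from the proof of Theorem~\ref{th:curve-billiard} with the period-integral criterion for Pell solutions supplied by the Krein-Levin-Nudelman (KLN) theorem recalled above. Throughout, write $\{c_1<c_2<c_3<c_4\}$ for the increasing ordering of $\{0,1/a,-1/b,1/\gamma\}$.

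Corollary~\ref{cor:pell-periodic} attaches to every $n$-periodic trajectory a polynomial $\hat p_n$ of degree $n$ solving the Pell equation \eqref{eq:pell} on $[c_1,c_2]\cup[c_3,c_4]$. By the KLN criterion \eqref{eq:KLN}, such a $\hat p_n$ exists if and only if there is an integer $n_1^{\star}$ satisfying
$$n_1^{\star}\int_{c_2}^{c_3}\frac{ds}{\sqrt{\prod_i(s-c_i)}}=n\int_{c_4}^{\infty}\frac{ds}{\sqrt{\prod_i(s-c_i)}},$$
in which case the polynomial has signature $(\tau_1,\tau_2)=(n-n_1^{\star}-1,\,n_1^{\star}-1)$, that is, $\tau_2=n_1^{\star}-1$ internal extremal points in the first interval $[c_1,c_2]$ and $\tau_1=n-n_1^{\star}-1$ in the second $[c_3,c_4]$. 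The theorem therefore reduces to proving the identification $n_1^{\star}=n_1$, where $n_1$ is the geometric count of bounces off relativistic ellipses.

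To establish this, I apply the birational substitution $s=1/\lambda$. A direct computation gives
$$s(s-1/a)(s+1/b)(s-1/\gamma)=\frac{(a-\lambda)(b+\lambda)(\gamma-\lambda)}{ab\gamma\,\lambda^{4}},$$
so, accounting for $ds=-d\lambda/\lambda^{2}$ and the sign $\varepsilon=\sign\gamma$, the first-kind differentials on the quartic curve $y^{2}=\prod_i(s-c_i)$ and on the cubic $\Curve$ of \eqref{eq:billiard-cubic} agree up to a nonzero real constant. Pushing forward the integral identity \eqref{InteEqn} from the proof of Theorem~\ref{th:curve-billiard},
$$n_1\int_{\alpha_1}^{0}\frac{d\lambda}{\sqrt{\varepsilon(a-\lambda)(b+\lambda)(\gamma-\lambda)}}+n_2\int_{\beta_1}^{0}\frac{d\lambda}{\sqrt{\varepsilon(a-\lambda)(b+\lambda)(\gamma-\lambda)}}=0,$$
through this substitution produces a relation among real periods of the $s$-elliptic curve. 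Using the standard identities among the real half-periods $\int_{c_1}^{c_2}$, $\int_{c_3}^{c_4}$, and $\int_{c_4}^{\infty}$ on appropriately chosen sheets, together with $n=n_1+n_2$, the pushed-forward identity rearranges into \eqref{eq:KLN} with $n_1^{\star}=n_1$.

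The principal obstacle is a three-case bookkeeping argument, mirroring the case split of the proof of Theorem~\ref{th:curve-billiard}. Depending on whether $\C_\gamma$ is an ellipse with $\gamma>0$, an ellipse with $\gamma<0$, or a hyperbola, the reciprocal $1/\gamma$ occupies different positions among the $c_i$, so that the images under $s=1/\lambda$ of the $\lambda$-intervals $[\alpha_1,0]$ and $[0,\beta_1]$ are different pairs of real arcs on the quartic curve. In each case one must verify that the push-forward of $[\alpha_1,0]$ is homologous to the cycle attached to $[c_2,c_3]$ in \eqref{eq:KLN} (whose coefficient is $n_1^{\star}$), whereas the push-forward of $[0,\beta_1]$ is homologous to the cycle attached to $[c_4,\infty]$ (whose coefficient is $n$). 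Once these case-dependent cycle identifications are completed, $n_1^{\star}=n_1$ is forced, and the signature claim follows immediately from the KLN statement.
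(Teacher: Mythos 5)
Your proposal follows essentially the same route as the paper: push the integral identity \eqref{InteEqn} through $s=1/x$, identify the resulting relation with the Krein--Levin--Nudelman condition \eqref{eq:KLN} via a case analysis on the position of $1/\gamma$ among the $c_i$, and read off the signature from the KLN theorem. One caution: the push-forward of $[\alpha_1,0]$ is not by itself a closed cycle (its endpoint $x=0$ maps to $s=\infty$, which is not a branch point of the quartic), so the identification with the $[c_2,c_3]$ cycle carrying coefficient $n_1$ and the $[c_4,\infty]$ cycle carrying coefficient $n$ only emerges after the add-and-subtract rearrangement $n_1\int_{\alpha_1}^{0}+n_2\int_{\beta_1}^{0}=n_1\int_{\alpha_1}^{\beta_1}+n\int_{\beta_1}^{0}$ that the paper performs explicitly in its Case 1.
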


\begin{proof}
	Recall that $c_{1}<c_{2}<c_{3}<c_{4}$.
	From the equation \refeq{InteEqn}, one has:
	\begin{equation}\label{IntEq2}
	n_{1}\int_{\alpha_{1}}^{0}f(x)dx+n_{2}\int_{\beta_{1}}^{0}f(x)dx=0
	\end{equation}
	where
	$\alpha_{1}$ is the largest negative value in $\{a, -b, \gamma\}$ and $\beta_{1}$ the smallest positive value in $\{a, -b, \gamma\}$.

The proof decomposes to the following cases:
\begin{itemize}
	\item Case 1: $\C_{\gamma}$ is an ellipse and $\gamma <0$, shown in Figure \ref{fig:case1};
	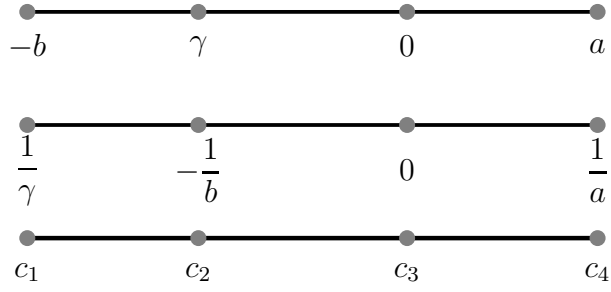
\begin{figure}[h]
	\centering
	\begin{tikzpicture}[scale=1.5]
	\draw[line width=.5mm, black] (0,1)--(5,1) ;
	\draw[line width=.5mm, black] (0,0)--(5,0);
	\draw[line width=.6mm, black] (0,-1)--(5,-1);
	
	\fill[gray] (0, 1) circle (2pt);
	\fill[gray] (5, 1) circle (2pt);
	\fill[gray] (1.5, 1) circle (2pt);
	\fill[gray] (3.33, 1) circle (2pt);
	
	\fill[gray] (0, 0) circle (2pt);
	\fill[gray] (5, 0) circle (2pt);
	\fill[gray] (1.5, 0) circle (2pt);
	\fill[gray] (3.33, 0) circle (2pt);
	
	\fill[gray] (0, -1) circle (2pt);
	\fill[gray] (5, -1) circle (2pt);
	\fill[gray] (1.5, -1) circle (2pt);
	\fill[gray] (3.33, -1) circle (2pt);
	
	\draw (0,.7)  node {$-b$};
	\draw (5,.7)  node {${a}$};
	\draw (1.5,.7)  node {${\gamma}$};
	\draw (3.33,.7)  node {${0}$};
	
	\draw (0,-.4)  node {$\dfrac{1}{{\gamma}}$};
	\draw (5,-.4)  node {$\dfrac{1}{{a}}$};
	\draw (1.5,-.4)  node {$-\dfrac{1}{{b}}$};
	\draw (3.33,-.4)  node {$0$};
	
	\draw (0,-1.3)  node {${c}_{1}$};
	\draw (5,-1.3)  node {${c}_{4}$};
	\draw (1.5,-1.3)  node {${c}_{2}$};
	\draw (3.33,-1.3)  node {${c}_{3}$};

	\end{tikzpicture}
	\caption{Case 1: $\alpha_{1}=\gamma$, $\beta_{1}=a$}
	\label{fig:case1}
\end{figure}

	\item Case 2: $\C_{\gamma}$ is an ellipse and $\gamma >0$, shown in Figure \ref{fig:case2};
	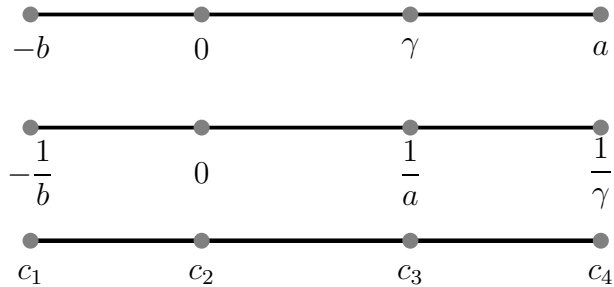
\begin{figure}[H]
	\centering
		\begin{tikzpicture}[scale=1.5]
\draw[line width=.5mm, black] (0,1)--(5,1) ;
\draw[line width=.5mm, black] (0,0)--(5,0);
\draw[line width=.6mm, black] (0,-1)--(5,-1);

\fill[gray] (0, 1) circle (2pt);
\fill[gray] (5, 1) circle (2pt);
\fill[gray] (1.5, 1) circle (2pt);
\fill[gray] (3.33, 1) circle (2pt);

\fill[gray] (0, 0) circle (2pt);
\fill[gray] (5, 0) circle (2pt);
\fill[gray] (1.5, 0) circle (2pt);
\fill[gray] (3.33, 0) circle (2pt);

\fill[gray] (0, -1) circle (2pt);
\fill[gray] (5, -1) circle (2pt);
\fill[gray] (1.5, -1) circle (2pt);
\fill[gray] (3.33, -1) circle (2pt);

\draw (0,.7)  node {$-{b}$};
\draw (5,.7)  node {${a}$};
\draw (1.5,.7)  node {$0$};
\draw (3.33,.7)  node {${\gamma}$};

\draw (0,-.4)  node {$-\dfrac{1}{{b}}$};
\draw (5,-.4)  node {$\dfrac{1}{{\gamma}}$};
\draw (1.5,-.4)  node {$0$};
\draw (3.33,-.4)  node {$\dfrac{1}{{a}}$};

\draw (0,-1.3)  node {${c}_{1}$};
\draw (5,-1.3)  node {${c}_{4}$};
\draw (1.5,-1.3)  node {${c}_{2}$};
\draw (3.33,-1.3)  node {${c}_{3}$};

\end{tikzpicture}
	\caption{Case 2: $\beta_{1}=\gamma$, $\alpha_{1}=-b$}
	\label{fig:case2}
\end{figure}

\item Case 3(i): $\C_{\gamma}$ is a hyperbola and $\gamma <-b$, shown in Figure \ref{fig:case3i};
	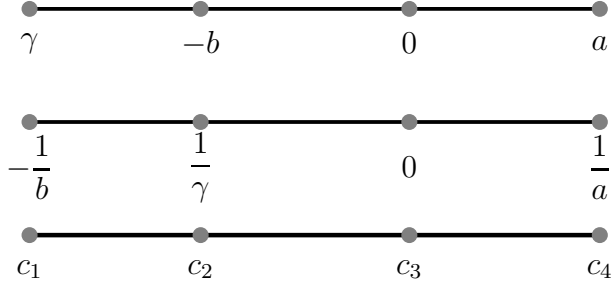
\begin{figure}[h]
	\centering
		\begin{tikzpicture}[scale=1.5]

\draw[line width=.5mm, black] (0,1)--(5,1) ;
\draw[line width=.5mm, black] (0,0)--(5,0);
\draw[line width=.6mm, black] (0,-1)--(5,-1);

\fill[gray] (0, 1) circle (2pt);
\fill[gray] (5, 1) circle (2pt);
\fill[gray] (1.5, 1) circle (2pt);
\fill[gray] (3.33, 1) circle (2pt);

\fill[gray] (0, 0) circle (2pt);
\fill[gray] (5, 0) circle (2pt);
\fill[gray] (1.5, 0) circle (2pt);
\fill[gray] (3.33, 0) circle (2pt);

\fill[gray] (0, -1) circle (2pt);
\fill[gray] (5, -1) circle (2pt);
\fill[gray] (1.5, -1) circle (2pt);
\fill[gray] (3.33, -1) circle (2pt);

\draw (0,.7)  node {${\gamma}$};
\draw (5,.7)  node {${a}$};
\draw (1.5,.7)  node {$-{b}$};
\draw (3.33,.7)  node {$0$};

\draw (0,-.4)  node {$-\dfrac{1}{{b}}$};
\draw (5,-.4)  node {$\dfrac{1}{{a}}$};
\draw (1.5,-.4)  node {$\dfrac{1}{{\gamma}}$};
\draw (3.33,-.4)  node {$0$};

\draw (0,-1.3)  node {${c}_{1}$};
\draw (5,-1.3)  node {${c}_{4}$};
\draw (1.5,-1.3)  node {${c}_{2}$};
\draw (3.33,-1.3)  node {${c}_{3}$};

\end{tikzpicture}
	\caption{Case 3(i): $\beta_{1}=a$, $\alpha_{1}=-b$}
	\label{fig:case3i}
\end{figure}

\item Case 3(ii): $\C_{\gamma}$ is a hyperbola and $\gamma >a$, shown in Figure \ref{fig:case3ii}.
	\begin{figure}[h]
	\centering
		\begin{tikzpicture}[scale=1.5]

\draw[line width=.5mm, black] (0,1)--(5,1) ;
\draw[line width=.5mm, black] (0,0)--(5,0);
\draw[line width=.6mm, black] (0,-1)--(5,-1);

\fill[gray] (0, 1) circle (2pt);
\fill[gray] (5, 1) circle (2pt);
\fill[gray] (1.5, 1) circle (2pt);
\fill[gray] (3.33, 1) circle (2pt);

\fill[gray] (0, 0) circle (2pt);
\fill[gray] (5, 0) circle (2pt);
\fill[gray] (1.5, 0) circle (2pt);
\fill[gray] (3.33, 0) circle (2pt);

\fill[gray] (0, -1) circle (2pt);
\fill[gray] (5, -1) circle (2pt);
\fill[gray] (1.5, -1) circle (2pt);
\fill[gray] (3.33, -1) circle (2pt);

\draw (0,.7)  node {$-{b}$};
\draw (5,.7)  node {${\gamma}$};
\draw (1.5,.7)  node {$0$};
\draw (3.33,.7)  node {${a}$};

\draw (0,-.4)  node {$-\dfrac{1}{{b}}$};
\draw (5,-.4)  node {$\dfrac{1}{{a}}$};
\draw (1.5,-.4)  node {$0$};
\draw (3.33,-.4)  node {$\dfrac{1}{{\gamma}}$};

\draw (0,-1.3)  node {${c}_{1}$};
\draw (5,-1.3)  node {${c}_{4}$};
\draw (1.5,-1.3)  node {${c}_{2}$};
\draw (3.33,-1.3)  node {${c}_{3}$};
\end{tikzpicture}
	\caption{Case 3(ii): $\beta_{1}=a$, $\alpha_{1}=-b$}
	\label{fig:case3ii}
\end{figure}
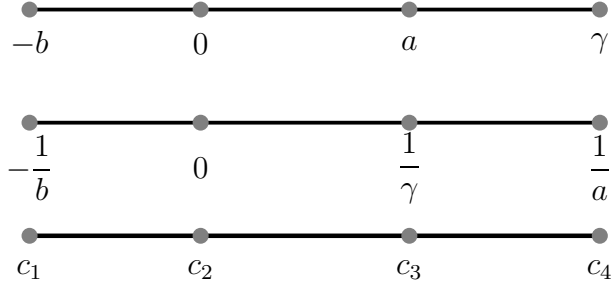

\end{itemize}	
We provide proof in the Case 1.
The proofs for other cases are analogous.	

Equation \refeq{IntEq2} is equivalent to
\begin{align*}
0
&=
n_{1}\int_{\gamma}^{0}f(x)dx + n_{1}\int_{0}^{a}f(x)dx+n_{2}\int_{a}^{0}f(x)dx-n_{1}\int_{0}^{a}f(x)dx
\\&
=
n_{1}\int_{\gamma}^{a}f(x)dx+(n_{1}+n_{2})\int_{a}^{0}f(x)dx,
\end{align*}
thus
$$
n_{1}\int_{\gamma}^{a}f(x)dx=(n_{1}+n_{2})\int_{0}^{a}f(x)dx.
$$	
Since the cycles around the cuts on the elliptic curve are homologous:
	$$
	\int_{\gamma}^{a}f(x)dx=\int_{-\infty}^{-b}f(x)dx,
	$$
\refeq{IntEq2} is equivalent to:
	$$
	n_{1}\int_{-\infty}^{-b}f(x)dx=(n_{1}+n_{2})\int_{0}^{a}f(x)dx.
	$$
Substituting: $s=\dfrac{1}{x}$, $c_{1}=\dfrac{1}{\gamma}$, $c_{2}=-\dfrac{1}{b}$, $c_{3}=0$, $c_{4}=\dfrac{1}{a}$ (see Figure \ref{fig:case1}), we get that
	$$
	n_{1}\int_{-1/b}^{0}\tilde{f}(s)ds=(n_{1}+n_{2})\int_{1/a}^{\infty}\tilde{f}(s)ds
	$$
	is equivalent to
	\begin{equation}\label{TagEq}
	n_{1}\int_{c_{2}}^{c_{3}}\tilde{f}(s)ds=(n_{1}+n_{2})\int_{c_{4}}^{\infty}\tilde{f}(s)ds,
	\end{equation}
where $\tilde f(s)ds$ is obtained from $f(x)dx$ by the substitution.
\end{proof}

In particular, for $n=3$, if the caustic $\C_{\gamma}$ is an ellipse with $\gamma<0$, then $n_{1}=1$.
Such polynomials and corresponding partitions $(3,1)$ do not arise in the study of Euclidean billiard trajectories.
On the other hand, if the caustic $\C_{\gamma}$ is an ellipse with $\gamma>0$, we have $n_{1}=2$.
Such polynomials for $\gamma>0$ can be explicitly expressed in terms of the Zolatarev polynomials, see Proposition \ref{propZolot}. Since their partition is $(3,2)$, they appeared before in the Euclidean case (see \cite{DragRadn2019rcd}).
The corresponding extremal polynomials $\hat{p}_{3}$ in both cases $\gamma<0$ and $\gamma>0$ are shown in Figure \ref{fig:ana5}.
We will provide in Proposition \ref{prop} the explicit formulae for such polynomials in terms of the general Akhiezer polynomials.
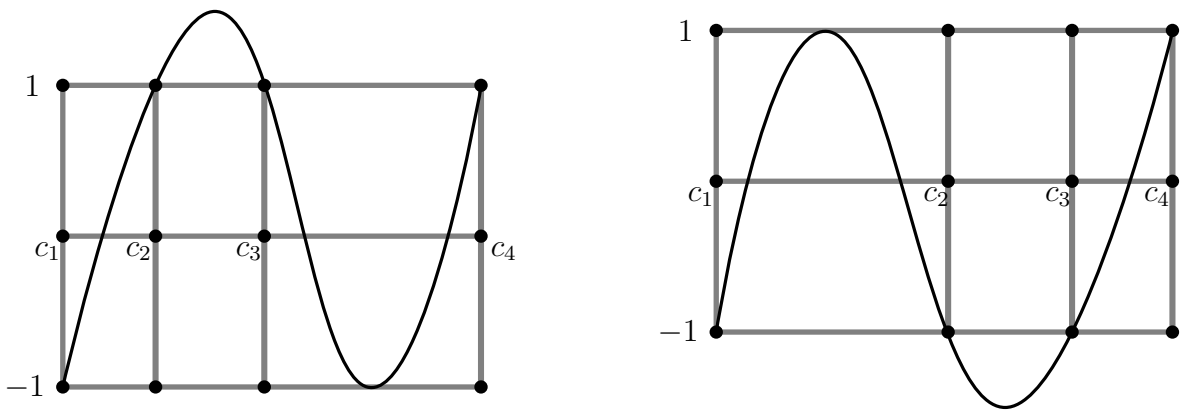
\begin{figure}[h]
	\begin{minipage}{0.5\textwidth}
		\begin{tikzpicture}[scale=1]
		
		\draw[line width=.7mm, gray](0,2)--(5.5,2);
		\draw[line width=.7mm, gray](0,0)--(5.5,0);
		\draw[line width=.7mm, gray](0,-2)--(5.5,-2);
		\draw[line width=.7mm, gray](0,2)--(0,-2);
		\draw[line width=.8mm, gray](1.22,-2)--(1.22,2);
		\draw[line width=.8mm, gray](2.65,-2)--(2.65,2);
		\draw[line width=.8mm, gray](5.5,-2)--(5.5,2);
		\draw[gray](0,-2)--(5.5,-2);
		

		\draw [very thick] plot [smooth, tension=.8] coordinates { (0,-2) (2,2.98) (4,-2) (5.5,1.98)};

		\fill[black] (2.65,-2) circle (2.5pt);
		\fill[black] (2.65,0) circle (2.5pt);
		\fill[black] (2.65,2) circle (2.5pt);
		\fill[black] (1.22,2) circle (2.5pt);
		\fill[black] (1.22,0) circle (2.5pt);
		\fill[black] (1.22,-2) circle (2.5pt);
		
		\fill[black] (0, 0) circle (2.5pt);
		\fill[black] (0,2) circle (2.5pt);
		\fill[black] (0,-2) circle (2.5pt);
		
		\fill[black] (5.5, 0) circle (2.5pt);
		\fill[black] (5.5,2) circle (2.5pt);
		\fill[black] (5.5,-2) circle (2.5pt);
		
		\draw (-.4,2) node {$1$};
		\draw (-.5,-2) node {$-1$};
		\draw (-.2,-.2) node {${c}_{1}$};
		\draw (1,-.2) node {${c}_{2}$};
		\draw (2.45,-.2) node {${c}_{3}$};
		\draw (5.8,-.2) node {${c}_{4}$};
		\end{tikzpicture}
	\end{minipage}
	\begin{minipage}{0.5\textwidth}
		\begin{tikzpicture}[scale=1]

		\draw[line width=.7mm, gray](0,2)--(6,2);
		\draw[line width=.7mm, gray](0,0)--(6,0);
		\draw[line width=.7mm, gray](0,-2)--(6,-2);
		\draw[line width=.7mm, gray](0,2)--(0,-2);
		\draw[line width=.8mm, gray](6,-2)--(6,2);
		\draw[line width=.8mm, gray](3.05,-2)--(3.05,2);
		\draw[line width=.8mm, gray](4.68,-2)--(4.68,2);
		\draw[gray](0,-2)--(6,-2);
		

		\draw [very thick] plot [smooth, tension=.8] coordinates { (0,-2) (1.5,1.98) (3.8,-3) (6,1.98)};

		\fill[black] (3.05,-2) circle (2.5pt);
		\fill[black] (3.05,0) circle (2.5pt);
		\fill[black] (3.05,2) circle (2.5pt);
		
		\fill[black] (4.68,-2) circle (2.5pt);
		\fill[black] (4.68,0) circle (2.5pt);
		\fill[black] (4.68,2) circle (2.5pt);
		
		\fill[black] (0, 0) circle (2.5pt);
		\fill[black] (0,2) circle (2.5pt);
		\fill[black] (0,-2) circle (2.5pt);
		
		\fill[black] (6, 0) circle (2.5pt);
		\fill[black] (6,2) circle (2.5pt);
		\fill[black] (6,-2) circle (2.5pt);
		
		\draw (-.4,2) node {$1$};
		\draw (-.5,-2) node {$-1$};
		\draw (-.2,-.2) node {${c}_{1}$};
		\draw (2.9,-.2) node {${c}_{2}$};
		\draw (4.5,-.2) node {${c}_{3}$};
		\draw (5.8,-.2) node {${c}_{4}$};

		\end{tikzpicture}
	\end{minipage}
	\caption{On the left: the polynomial $\hat{p}_{3}$ corresponding to $n=3$, $n_1=1$, $n_2=2$, $\gamma<0$. On the right: the polynomial $\hat{p}_{3}$ corresponding to $n=3$, $n_1=2$, $n_2=1$, $\gamma>0$.
	}
	\label{fig:ana5}
\end{figure}

Let us recall that the Chebyshev polynomials $T_n(x), n= 0, 1, 2,\dots$
defined by the recursion:
\begin{equation}\label{eq:cheb1}
T_0(x)=1, \quad T_1(x)=x,\quad T_{n+1}(x)+T_{n-1}(x)=2xT_n(x),
\end{equation}
for $n=1, 2\dots$ can be parameterized as
\begin{equation}\label{eq:cheb2}
T_n(x)=\cos n\phi,\quad x=\cos\phi,
\end{equation}
or, alternatively:
\begin{equation}\label{eq:cheb3}
 T_n(x)=\frac{1}{2}\left(v^n+\frac{1}{v^n}\right), \quad x=\frac{1}{2}\left(v+\frac{1}{v}\right).
\end{equation}
Denote $L_0=1$ and $L_n=2^{1-n}, n=1, 2,\dots$.
Then the Chebyshev Theorem states that the polynomials $L_nT_n(x)$ are characterized as the solutions of the following minmax problem:

{\it  find the polynomial of degree $n$ with the leading coefficient equal 1 which minimizes the uniform norm on the interval $[-1, 1]$.}

\subsection{Zolotarev polynomials}

Following the ideas of Chebyshev, his student Zolotarev posed and solved a handful of problems, including the following (\cite{AhiezerAPPROX,DragRadn2019rcd}):

{\it For the given real parameter $\sigma$ and all polynomials of degree $n$ of the form:
\begin{equation}\label{eq:zol1}
p(x)=x^n-n\sigma x^{n-1} + p_2x^{n-2}+\dots p_n,
\end{equation}
find the one with the minimal uniform norm on the interval $[-1, 1]$.}

Denote this minimal uniform norm as $L_n=L(\sigma, n)$.

For $\sigma>\tan^2(\pi/2n)$, the solution $z_n$ has the following property (\cite{AhiezerAPPROX}, p. 298, Fig. 9):

{\it $\Pi1$ -- The equation $z_n(x)=L_n$ has $n-2$ double solutions in the open interval $(-1, 1)$ and simple solutions at $-1, 1, \alpha, \beta$, where $1<\alpha <\beta$, while in the union of the intervals $[-1,1]\cup [\alpha, \beta]$ the inequality $z_n^2\le L_n$ is satisfied and $z_n>L_n$ in the complement.}

The polynomials $z_n$ are given by the following explicit formulae:
\begin{equation}\label{eq:zn}
z_n=\ell_n\left(v(u)^n+\frac1{v(u)^n}\right),
\quad
 x=\frac{\sn^2u +\sn^2\frac{K}{n}}{\sn^2u -\sn^2\frac{K}{n}},
\end{equation}
where
$$\ell_n=\frac1{2^n}\left(\frac{\sqrt{\kappa}\theta_1^2(0)}{H_1\left(\frac{K}{n}\right)\theta_1\left(\frac{K}{n}\right)}\right)^{2n}, \quad v(u)=\frac{H\left(\frac{K}{n}-u\right)}{H\left(\frac{K}{n}+u\right)}$$
and
$$\sigma=\frac{2\sn\frac{K}{n}}{\cn\frac{K}{n}\dn\frac{K}{n}}\left(\frac1{\sn\frac{2K}{n}}-\frac {\theta'\left(\frac{K}{n}\right)}{\theta\left(\frac{K}{n}\right)}\right)-1.
$$
Formulae for the endpoints of the second interval are
\begin{equation}\label{eq:alphabetan}
\alpha =\frac{1+\kappa^2\sn^2\frac{K}{n}}{\dn^2\frac{K}{n}}, \quad \beta =\frac{1+\sn^2\frac{K}{n}}{\cn^2\frac{K}{n}},
\end{equation}
with
$$
\kappa^2=\frac{(\alpha-1)(\beta+1)}{(\alpha+1)(\beta-1)}.
$$
According to Cayley's condition for $n=3$ and $\gamma \in (0, a)$ we have
$$
\gamma=\frac{ab(a-b)+2ab\sqrt{a^2+ab+b^2}}{(a+b)^2}.
$$
In order to derive the formulas for $\hat p_3$ in terms of $z_3$, let us construct
an affine transformation:
$$
h:[-1, 1]\cup [\alpha, \beta]\rightarrow [-b^{-1}, 0]\cup[a^{-1}, \gamma^{-1}],
\quad
 h(x) = \hat a x +\hat b.
$$

We immediately get
$$
\hat a = -\hat b, \quad \hat a = \frac1{2b}
$$
and
\begin{equation}\label{eq:alphabetacayley3}
\alpha = 2t+1,
\end{equation}
\begin{equation}\label{eq:alphabetacayley4}
\gamma=\frac{2b}{\beta -1}
\end{equation}
where $t=b/a$.

Now we get the following
\begin{proposition}\label{propZolot}
The polynomial $\hat p_3$ can be expressed through the Zolotarev polynomial $z_3$ up to a nonessential constant factor:
$$
\hat p_3 (s) \sim z_3( 2bs+1).
$$
\end{proposition}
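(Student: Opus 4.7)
The strategy is to exhibit $z_3(2bs+1)$, after the affine substitution $x=2bs+1$, as a degree-$3$ solution of a Pell equation on the same pair of intervals $[-1/b, 0]\cup[1/a, 1/\gamma]$ that $\hat p_3$ lives on, and then invoke uniqueness (up to a nonessential constant) of the degree-$3$ Pell solution on such a pair of intervals.

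First, by Corollary \ref{cor:pell-periodic} the polynomial $\hat p_3$ solves the Pell equation \refeq{eq:pell} of degree $3$ on the union $[-1/b, 0]\cup[1/a, 1/\gamma]$, and Theorem \ref{th:impactwinding} identifies its partition as $(3, n_1)=(3,2)$, which matches the partition of $z_3$ on $[-1,1]\cup[\alpha,\beta]$ prescribed by property $\Pi_1$. Under the affine map $h^{-1}(s)=2bs+1$, which sends $[-1,1]$ onto $[-1/b, 0]$, the Pell equation satisfied by $z_3$ transforms into a Pell equation of degree $3$ on $[-1/b, 0]\cup[(\alpha-1)/(2b),(\beta-1)/(2b)]$. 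The two unions of intervals coincide precisely when the conditions \refeq{eq:alphabetacayley3} and \refeq{eq:alphabetacayley4} are simultaneously satisfied.

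Next, I would verify that these two conditions are realised by a single admissible value of the Zolotarev modular parameter. The relation $\alpha=2t+1$ with $t=b/a$ fixes $(\alpha-1)/(\alpha+1)=b/(a+b)$; combined with the formulas \refeq{eq:alphabetan} this selects a unique value of the modular ratio $K/3$ with $\sigma>\tan^2(\pi/6)=1/3$, and hence determines $\beta$. The value of $\gamma$ extracted from \refeq{eq:alphabetacayley4} must then agree with the one produced by the $3$-periodic Cayley condition $C_2=0$ in \refeq{eqn:3-periodic}. This agreement is not a coincidence: both conditions are manifestations of the same Krein--Levin--Nudelman integral relation \refeq{eq:KLN} with partition $(3,2)$ on $[-1/b, 0]\cup[1/a, 1/\gamma]$, as was already exploited in the proof of Theorem \ref{th:impactwinding}.

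With intervals, degree, and partition all matched, the degree-$3$ Pell solution is unique up to a nonzero scalar multiple, so $\hat p_3(s)=C\cdot z_3(2bs+1)$ for some constant $C$. The principal technical hurdle is the bookkeeping which identifies the billiard integral condition with the Zolotarev integral condition after the substitution $s=(x-1)/(2b)$; once this identification is in place, the proportionality follows from extremal polynomial uniqueness, and no further elliptic-function manipulation is needed.
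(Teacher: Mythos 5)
Your overall architecture is sound and matches the paper's in its endpoints (construct the affine map, match the two unions of intervals, invoke uniqueness of the degree-$3$ Pell/extremal solution), but you take a genuinely different route at the crux. The paper's verification is a direct computation: it sets $Y=\sn\frac{K}{3}$, derives $\kappa^2=\frac{2Y-1}{Y^3(2-Y)}$ from the addition formulae, expresses $\alpha$ and $\beta$ of \eqref{eq:alphabetan} in terms of $Y$, solves $\alpha=2t+1$ for $Y$ in terms of $t=b/a$ (eq.~\eqref{eq:Yt}), and then checks by simplification that $2b/(\beta-1)$ coincides with the Cayley value of $\gamma$ from \eqref{CaleyEq1} (eqs.~\eqref{P1}--\eqref{P2}). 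You replace all of this elliptic-function bookkeeping by an abstract argument: both candidate values of $\gamma$ are forced by the same extremal/periodicity data, so they must agree. That is an attractive shortcut and, properly completed, it does work; it buys you freedom from Jacobi-function identities at the price of an extra uniqueness statement.

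The soft spot is precisely that uniqueness statement, and as written your justification does not carry the weight. You say the agreement holds because ``both conditions are manifestations of the same Krein--Levin--Nudelman integral relation with partition $(3,2)$,'' but the KLN relation \eqref{eq:KLN} depends on all four branch points $c_1,\dots,c_4$ (both through the endpoint $c_4$ and through the integrand $\hat f$). Two different values of $\gamma$ give two \emph{different} KLN relations, so asserting they are ``the same relation'' presupposes the conclusion. To close the argument you need one of: (i) a proof that, with $c_1=-1/b$, $c_2=0$, $c_3=1/a$ and the partition $(3,2)$ fixed, the KLN relation determines $c_4$ uniquely (monotonicity of the period ratio in $c_4$), which is not established anywhere in the paper; or (ii) the concrete detour: the transported polynomial $z_3(2bs+1)$ is a degree-$3$ Pell solution on $[-1/b,0]\cup[1/a,1/\gamma']$ with $\gamma'=2b/(\beta-1)\in(0,a)$, hence by Theorem \ref{th:polynomial}/Corollary \ref{cor:pell-periodic} and Theorem \ref{th:cayley-billiard} the value $\gamma'$ satisfies $C_2=0$, and since the quadratic \eqref{eqn:3-periodic} has exactly one root in $(0,a)$, namely $\gamma_1$ of \eqref{CaleyEq1}, you get $\gamma'=\gamma_1$. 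Route (ii) is short and uses only results already proved in the paper; I recommend you state it explicitly. A minor slip: $h^{-1}(s)=2bs+1$ sends $[-1/b,0]$ onto $[-1,1]$, not the other way around (it is $h$ that maps $[-1,1]$ to $[-1/b,0]$); this does not affect the substance.
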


To verify the proposition, we should  certify that the definition of  $\alpha$ and $\beta$ from \refeq{eq:alphabetan} for $n=3$ and the relations \refeq{eq:alphabetacayley3}, \refeq{eq:alphabetacayley4} are compatible with the formula for $\gamma$ we got before as Cayley condition, see \refeq{CaleyEq1}.

In order to do that we will use well-known identities for the Jacobi elliptic functions:

\begin{gather}
\sn^2u+\cn^2u=1,
\\
\kappa^2\sn^2u+\dn^2u=1,
\\
\sn(u+v)=\frac{\sn\, u\cn\, v\dn \,v + \sn\, v\cn\, u\dn\, u}{1-\kappa^2\sn^2 u \sn ^2 v},
\\
\sn (K-u)=\frac{\cn\, u}{\dn\, u}.
\end{gather}
In particular, we get
\begin{gather}
\sn \left(\frac {2K}{3}\right)= \frac{2\sn\, \frac{K}{3}\cn \,\frac{K}{3}\dn \,\frac{K}{3}}{1-\kappa^2\sn^4\frac{K}{3}},
\\
\sn \left(\frac {2}{3}K\right)= \sn \left(K-\frac {K}{3}\right)=\frac{\cn \,\frac{K}{3}}{\dn\, \frac{K}{3}}.
\end{gather}
Let us denote
$$Y=\sn \left(\frac {K}{3}\right),$$
then from the previous two relations we get as in (\cite{DragRadn2019rcd}):
$$
1-2Y+2\kappa^2Y^3 -\kappa^2Y^4=0.
$$
We can express $\kappa$ in terms of $Y$ and get:
\begin{equation}\label{Add F}
\kappa^2=\frac{2Y-1}{Y^3(2-Y)}.
\end{equation}
By plugging the last relation into \refeq{eq:alphabetan} for $n=3$ we get
$$
\alpha=\frac{Y^2-4Y+1}{Y^2-1}.
$$
Since, at the same time from the Cayley condition we have $\alpha = 2t+1$, with $t=b/a$, we can express $Y$ in terms of $t$:
$$
tY^2+2Y-(t+1)=0,
$$
and
\begin{equation}\label{eq:Yt}
Y=\frac{-1\pm \sqrt{1+t+t^2}}{t}.
\end{equation}
We plug the last relation into the formula for $\beta$ from \refeq{eq:alphabetan} for $n=3$
$$
\beta=\frac{1+Y^2}{1-Y^2},
$$
and we get another formula for $\beta$ in terms of $t$:
\begin{gather}\label{eq:betat}
\beta=\frac{2t^2+t+2-\pm 2\sqrt{t^2+t+1}}{-t-2 \pm 2\sqrt{t^2+t+1}}.
\end{gather}
We see that the last formula with the choice of the $+$ sign corresponds to a formula for $\beta$ from \refeq{eq:alphabetacayley4}. This formula relates $\beta$ and $\gamma$  from the Caley condition \refeq{CaleyEq1}.
From \refeq{eq:betat}, taking the positive sign in $\beta$ yields,
\begin{gather}\label{beta2}
\beta=\frac{2t^2+t+2- 2\sqrt{t^2+t+1}}{-t-2 + 2\sqrt{t^2+t+1}}.
\end{gather}
Substituting \refeq{beta2} into \refeq{eq:alphabetacayley4} produces
\begin{equation}\label{P1}
\gamma=\frac{2b}{\beta -1} = b\dfrac{-2-t+2\sqrt{1+t+t^2}}{2+t+t^2-2\sqrt{1+t+t^2}}.
\end{equation}
On the other hand, from the Cayley formula \refeq{CaleyEq1}:
$$
\gamma= \dfrac{ab}{(a+b)^{2}}(a-b+2\sqrt{a^{2}+ab+b^{2}})
$$
Knowing that $t=\frac{b}{a}$, the equation is equivalent to
\begin{equation}\label{P2}
\gamma= b\dfrac{1-t+2\sqrt{1+t+t^2}}{(1+t)^2}
\end{equation}
In order to show that the two expressions in \refeq{P1} and \refeq{P2} are identical, we simplify their difference that yields zero. This finalizes the verification.
(One can observe that the $-$ sign option from the formula \refeq{eq:betat} would correspond to
the $-$ sign in the formula for $\gamma$ \refeq{CaleyEq1}.

 Among the polynomials $\hat p_n$ the property of type $\Pi1$ can be attributed
only to those with $n=2k+1$ and winding numbers $(2k+1, 2k)$, in other words to those with the signature $(0, 2k-1)$.

\subsection{Akhiezer polynomials on symmetric intervals $[-1, -\alpha]\cup[\alpha, 1]$}

The problem of finding polynomials of degree $n$ with the leading coefficient 1 and minimizing the uniform norm on the union of two symmetric intervals $[-1, -\alpha]\cup[\alpha, 1]$, for given $0<\alpha <1$ appeared to be of a significant interest in radio-techniques applications. Following the ideas of Chebyshev and Zolotarev, Akhiezer derived in 1928 the explicit formulae for such polynomials $A_n(x;\alpha)$ with the deviation $L_n(\alpha)$ \cite{AhiezerAPPROX,Akh4}.

These formulas are especially simple in the case of even degrees $n=2m$, when Akhiezer polynomials $A_{2m}$ are obtained by a quadratic substitution from the Chebyshev polynomial $T_m$:

\begin{equation}\label{eq:A2m}
A_{2m}(x;\alpha)=\frac{(1-\alpha^2)^m}{2^{2m-1}}T_m\left(\frac{2x^2-1-\alpha^2}
{1-\alpha^2}\right),
\end{equation}
with
$$
L_{2m}(\alpha)=\frac{(1-\alpha^2)^m}{2^{2m-1}}.
$$
We are going to construct $\hat p_4(s)$ up to a nonessential constant factor as a composition of $A_4(x;\alpha)$ for certain $\alpha$ and an affine transformation.
We are going to study the possibility to have an affine transformation
$$
g:[-1,-\alpha]\cup[\alpha, 1]\rightarrow [-b^{-1}, \gamma^{-1}]\cup [0, a^{-1}],\quad g(x)=\hat a x +\hat b,
$$
which corresponds to the case when $\gamma<-b$ ie $a>b$. For $n=4$ such caustic is \refeq{4pCaustic}:
$$
\gamma=\frac{ab}{b-a}.
$$

From $g(-1)=-b^{-1}$, $g(1)=a^{-1}$
we get
$$
\hat a = \frac{a+b}{2ab}, \quad \hat b =\frac{b-a}{2ab}.
$$
Then, from $g(\alpha)=0$ we get
$$
\alpha =\frac {a-b}{a+b}.
$$
Finally, we calculate:
$$
g(-\alpha)=\frac{a+b}{2ab}\frac {b-a}{a+b} + \frac{b-a}{2ab}=\frac{b-a}{ab}.
$$
We recognize $\gamma^{-1}$ on the righthand side of the last relation.\newline
This proves the following:
\begin{proposition} In this case the polynomial $\hat p_4(s)$ equals, up to a constant multiplier, to
	\begin{equation}\label{eq:p4t2}
	\hat p_4(s) \sim T_2(2abs^2 +2(a-b)s+1),
	\end{equation}
	where $T_2(x)=2x^2-1$ is the second Chebyshev polynomial and $x=\frac{1}{a+b}\big(2abs+a-b\big)$.
\end{proposition}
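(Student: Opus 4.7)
The plan is to combine two ingredients already in hand: the Pell characterization of $\hat p_4(s)$ from Corollary \ref{cor:pell-periodic} and the explicit formula \eqref{eq:A2m} for the degree-$2m$ Akhiezer polynomials on symmetric intervals. The affine change of variables $g$ just constructed by the authors intertwines the two settings, and the essential uniqueness of a polynomial solving a Pell equation on a prescribed union of two intervals does the rest.

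By Corollary \ref{cor:pell-periodic}, $\hat p_4(s)$ is the unique (up to a scalar) degree-$4$ polynomial satisfying
$$\hat p_4(s)^2 - s\left(s-\tfrac1a\right)\left(s+\tfrac1b\right)\left(s-\tfrac1\gamma\right)\hat q_2(s)^2 = 1$$
on the two intervals $[-1/b,1/\gamma]\cup[0,1/a]$. Setting $s=g(x)=\hat a x + \hat b$ with $\hat a=(a+b)/(2ab)$, $\hat b=(b-a)/(2ab)$ and $\alpha=(a-b)/(a+b)$, the quartic $s(s-1/a)(s+1/b)(s-1/\gamma)$ becomes a positive scalar multiple of $(x^2-1)(x^2-\alpha^2)$, since its four roots $\{0,1/a,-1/b,1/\gamma\}$ are precisely the $g$-images of $\{\alpha,1,-1,-\alpha\}$. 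Therefore $P(x):=\hat p_4(g(x))$ satisfies a Pell equation on $[-1,-\alpha]\cup[\alpha,1]$ and by uniqueness must coincide, up to a constant, with the Akhiezer polynomial $A_4(\,\cdot\,;\alpha)$.

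Inserting the explicit formula \eqref{eq:A2m} together with $x=g^{-1}(s)=\dfrac{2abs+(a-b)}{a+b}$ yields
$$\hat p_4(s)\sim T_2\!\left(\frac{2x^2-1-\alpha^2}{1-\alpha^2}\right).$$
Using $1-\alpha^2=4ab/(a+b)^2$ and $1+\alpha^2=2(a^2+b^2)/(a+b)^2$, expanding $(2abs+(a-b))^2$ and simplifying reduces the inner argument of $T_2$ to an explicit quadratic in $s$ of the form claimed. As an independent consistency check, write $y:=2abs^2+2(a-b)s-1$ and observe the factorizations $y+1=2ab\,s(s-1/\gamma)$ and $y-1=2ab(s-1/a)(s+1/b)$, so
$$T_2(y)^2-1=4y^2(y^2-1)=16a^2b^2\,y^2\cdot s\!\left(s-\tfrac1a\right)\!\left(s+\tfrac1b\right)\!\left(s-\tfrac1\gamma\right),$$
which exhibits $T_2(y)$ directly as a solution of the required Pell equation with companion $\hat q_2(s)=4ab\,y(s)$.

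The only real obstacle is the algebraic simplification of the inner argument of $T_2$ under the substitution $x=g^{-1}(s)$; the bookkeeping of signs and scalars must be kept straight, and in particular one must verify that the linear factor $abs+(a-b)$ coincides with $ab(s-1/\gamma)$ via the value $\gamma=ab/(b-a)$ from \eqref{4pCaustic}. The direct factorization of $y\pm 1$ in the consistency check bypasses this bookkeeping entirely and, together with the uniqueness of degree-$4$ Pell solutions, closes the argument.
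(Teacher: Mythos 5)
Your proof is correct and follows essentially the same route as the paper: you build the affine map $g$ with $\hat a=(a+b)/(2ab)$, $\hat b=(b-a)/(2ab)$, check that the four endpoints $\{-1,-\alpha,\alpha,1\}$ land on $\{-1/b,1/\gamma,0,1/a\}$, and then pull back the Akhiezer polynomial $A_4(\,\cdot\,;\alpha)$ through \eqref{eq:A2m}. What you add beyond the paper's argument is worthwhile: the explicit factorizations $y+1=2ab\,s(s-1/\gamma)$ and $y-1=2ab(s-1/a)(s+1/b)$ with $y=2abs^2+2(a-b)s-1$ verify directly that $T_2(y)$ solves the Pell equation \eqref{eq:pell} with $\hat q_2=4ab\,y$, whereas the paper only checks the endpoint correspondence; you are also right that identifying $\hat p_4$ with this solution still requires the (standard) uniqueness of degree-$n$ Pell solutions on a fixed pair of intervals, which the paper leaves implicit. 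Note that your computation actually shows the constant term inside $T_2$ is $-1$, not $+1$ as displayed in \eqref{eq:p4t2}: with $+1$ the polynomial takes the value $T_2(3)=17$ at $s=1/a$ and cannot satisfy the Pell equation, so the displayed formula contains a sign typo (compare \eqref{eq:p4t4} and \eqref{eq:p4t5}, where the constant term is $-(a+b)/(a+b)=-1$). Your version is the correct one.
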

Let us study the possibility to have an affine transformation
$$
f:[-1,-\alpha]\cup[\alpha, 1]\rightarrow [-b^{-1}, 0]\cup [\gamma^{-1}, a^{-1}],\quad f(x)=\hat a x +\hat b,
$$
which corresponds to the case when $\gamma>a$ ie $a<b$. For $n=4$ such caustic is
$$
\gamma=\frac{ab}{b-a}.
$$

From $f(-1)=-b^{-1}$, $f(1)=a^{-1}$
we get
$$
\hat a = \frac{a+b}{2ab}, \quad \hat b =\frac{b-a}{2ab}.
$$
Then, from $f(-\alpha)=0$ we get
$$
\alpha =\frac {b-a}{a+b}.
$$
Finally, we calculate:
$$
f(\alpha)=\frac{a+b}{2ab}\frac {b-a}{a+b} + \frac{b-a}{2ab}=\frac{b-a}{ab}.
$$
We recognize $\gamma^{-1}$ on the righthand side of the last relation.\\
This proves the following proposition which is the same as \refeq{eq:p4t2}.
\begin{proposition} In this case the polynomial $\hat p_4(s)$ is equal up to a constant multiplier to
	\begin{equation}\label{eq:p4t3}
	\hat p_4(s) \sim T_2(2abs^2 +2(a-b)s+1),
	\end{equation}
	where $T_2(x)=2x^2-1$ is the second Chebyshev polynomial and $x=\dfrac{1}{a+b}\big(2abs+a-b\big)$.
\end{proposition}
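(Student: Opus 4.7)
The plan is to parallel the argument of the preceding proposition, swapping the roles of the two intervals to match the new position of the caustic. First I would verify that under the assumptions $a<b$ and $\gamma=ab/(b-a)$ one has $0<a<\gamma$, so $\C_{\gamma}$ is a hyperbola with major axis along the $\mathsf{y}$-axis; accordingly the four Pell endpoints are ordered as $c_1 = -1/b < c_2 = 0 < c_3 = 1/\gamma < c_4 = 1/a$, and by Theorem \ref{th:polynomial}(a) the polynomial $\hat p_4$ must coincide, up to normalization, with the monic degree-$4$ extremal polynomial on $[c_1,c_2]\cup[c_3,c_4]$.

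The core step is to construct an affine identification
$$
f(x) = \hat a\,x+\hat b,\qquad f\colon [-1,-\alpha]\cup[\alpha,1]\longrightarrow [c_1,c_2]\cup[c_3,c_4],
$$
with the endpoint conditions $f(-1)=-1/b$ and $f(1)=1/a$ forcing $\hat a = (a+b)/(2ab)$ and $\hat b = (b-a)/(2ab)$. Next I would impose $f(-\alpha)=0$, which determines $\alpha = (b-a)/(a+b)\in(0,1)$ (positivity uses $b>a$), and then check that the fourth image $f(\alpha) = 2\hat b = (b-a)/(ab)$ automatically coincides with $1/\gamma$ by the Cayley-type relation \eqref{4pCaustic}. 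Since an affine change of variable preserves the structure of a Pell equation up to a harmless rescaling of its defining quartic, the pullback $A_4\bigl(f^{-1}(s);\alpha\bigr)$ of the Akhiezer polynomial \eqref{eq:A2m} will coincide, up to a nonzero constant, with the required $\hat p_4(s)$.

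The last task is the algebraic simplification of the $T_2$-argument under $x = f^{-1}(s) = (2abs+a-b)/(a+b)$. Using $\alpha^2 = (a-b)^2/(a+b)^2$ and $1-\alpha^2 = 4ab/(a+b)^2$, the common factor $(a+b)^{-2}$ appearing in both numerator and denominator of $(2x^2-1-\alpha^2)/(1-\alpha^2)$ cancels, leaving a quadratic in $s$ with leading coefficient $2ab$ and linear coefficient $2(a-b)$; composition with $T_2$ then yields the quartic form claimed in the statement. Essentially every ingredient is a direct transcription of the argument for the previous proposition, and the hard part, such as it is, will be tracking signs through the Akhiezer simplification and confirming the consistency condition $f(\alpha)=1/\gamma$ in this new regime.
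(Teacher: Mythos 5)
Your proposal follows essentially the same route as the paper's proof: the same affine map $f$ with $f(-1)=-1/b$, $f(1)=1/a$, the same determination $\alpha=(b-a)/(a+b)$ from $f(-\alpha)=0$, the same consistency check $f(\alpha)=2\hat b=(b-a)/(ab)=1/\gamma$ against the caustic value \eqref{4pCaustic}, and the same substitution into the symmetric Akhiezer polynomial \eqref{eq:A2m}. One small point worth completing: if you carry the final simplification through, the argument of $T_2$ comes out as $2abs^2+2(a-b)s-1$ (check at $s=1/a$, where it must equal $1$), so the constant term is $-1$ rather than the $+1$ printed in the statement --- a typo your computation would catch.
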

Let us study the possibility to have an affine transformation
$$
h:[-1,-\alpha]\cup[\alpha, 1]\rightarrow [\gamma^{-1}, -b^{-1}]\cup  [0,a^{-1}],\quad h(x)=\hat a x +\hat b,
$$
which corresponds to the case when $\gamma \in (-b,0)$. For $n=4$ such caustic is
$$
\gamma=-\frac{ab}{a+b}.
$$

From $h(1)=a^{-1}$, $h(\alpha)=0$
we get
$$
\hat a = \frac{1}{1-\alpha}\frac{1}{a}, \quad \hat b =-\frac{\alpha}{1-\alpha}\frac{1}{a}.
$$
Then, from $h(-\alpha)=-\frac{1}{b}$ we get
$$
\frac{\alpha}{1-\alpha} =\frac {a}{2b}.
$$
ie
$$
\alpha =\frac {a}{a+2b}.
$$
Finally, we calculate:
$$
h(-1)=-\big(1+\frac{\alpha}{1-\alpha}\big)\frac{1}{a}-\frac{\alpha}{1-\alpha}\frac{1}{a},
$$
$$
h(-1)=-\big(1+\frac {a}{2b}\big)\frac{1}{a}-\frac {a}{2b}\frac{1}{a}=-\frac{1}{a}-\frac{1}{b}= -\frac{a+b}{ab}.
$$
We recognize $\gamma^{-1}$ on the righthand side of the last relation.\\
This proves the following:
\begin{proposition} In this case the polynomial $\hat p_4(s)$ equal,s up to a constant multiplier, to
	\begin{equation}\label{eq:p4t4}
	\hat p_4(s) \sim T_2\left(\frac{2a^{2}bs^{2}+2a^{2}s-(a+b)}{a+b}\right),
	\end{equation}
	where $T_2(x)=2x^2-1$ is the second Chebyshev polynomial.
\end{proposition}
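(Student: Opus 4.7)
The plan is to adapt the strategy used for the two preceding propositions, combining the affine reduction already carried out in the paragraph above the statement with the classical closed form for even-degree Akhiezer polynomials on symmetric intervals. Concretely, the map $h(x) = (x - \alpha)/((1-\alpha)a)$ with $\alpha = a/(a+2b)$ has just been shown to be a bijection
\[
h:[-1,-\alpha]\cup[\alpha,1]\;\longrightarrow\;[\gamma^{-1},-b^{-1}]\cup[0,a^{-1}],\qquad \gamma=-\tfrac{ab}{a+b},
\]
and this is exactly the pair of intervals on which $\hat p_{4}$ is the extremal polynomial coming out of Corollary \ref{cor:pell-periodic} (or equivalently of Theorem \ref{th:polynomial} with $n=4$). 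Since the solution of the Pell equation on a given union of two intervals is unique up to a normalizing constant, the extremal polynomial $\hat p_{4}(s)$ on the target pair coincides, up to a nonzero multiplicative constant, with the pullback $A_{4}(h^{-1}(s);\alpha)$ of the Akhiezer polynomial on the symmetric pair.

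I would then invoke the $m=2$ case of formula \refeq{eq:A2m},
\[
A_{4}(x;\alpha) \;=\; \frac{(1-\alpha^{2})^{2}}{8}\, T_{2}\!\left(\frac{2x^{2}-1-\alpha^{2}}{1-\alpha^{2}}\right),
\]
which displays $A_{4}$ as $T_{2}$ composed with the quadratic substitution $x\mapsto(2x^{2}-1-\alpha^{2})/(1-\alpha^{2})$. After this step the only remaining task is a direct substitution $x = h^{-1}(s)$ and a simplification of the argument of $T_{2}$. Inverting the affine map gives $h^{-1}(s)=(2abs+a)/(a+2b)$; using $1-\alpha^{2}=4b(a+b)/(a+2b)^{2}$, expanding $x^{2}$, and cancelling the common factor $4b/(a+2b)^{2}$ between numerator and denominator reduces $(2x^{2}-1-\alpha^{2})/(1-\alpha^{2})$ to $(2a^{2}bs^{2}+2a^{2}s-(a+b))/(a+b)$, which is the argument of $T_{2}$ in the statement.

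The only real work is this bookkeeping in the final simplification, and I do not expect a conceptual obstacle: the existence and explicit form of the extremal polynomial on symmetric intervals is classical, and the affine reduction has already been performed in the text preceding the proposition. A minor point to keep an eye on is that $h$ reverses orientation on the left interval (sending $-1$ to $\gamma^{-1}$, which lies \emph{below} $-b^{-1}$), but since the Akhiezer polynomial $A_{4}(x;\alpha)$ is even in $x$ and the Pell-type identity of Theorem \ref{th:polynomial-elliptic}(b) is invariant under the affine change of variable, this orientation reversal does not affect the identification.
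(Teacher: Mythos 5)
Your proposal is correct and follows essentially the same route as the paper: the affine map $h$ with $\alpha=a/(a+2b)$ computed in the preceding paragraph, combined with the $m=2$ case of \eqref{eq:A2m}, and your explicit simplification of the argument of $T_2$ to $(2a^2bs^2+2a^2s-(a+b))/(a+b)$ checks out. (A minor slip: $h$ is increasing, so it does not reverse orientation on the left interval --- $-1$ is sent to the left endpoint $\gamma^{-1}$ of $[\gamma^{-1},-b^{-1}]$ --- but as you note this has no bearing on the argument.)
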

Let us study the possibility to have an affine transformation
$$
l:[-1,-\alpha]\cup[\alpha, 1]\rightarrow [-b^{-1},0]\cup  [a^{-1},\gamma^{-1}],\quad l(x)=\hat a x +\hat b,
$$
which corresponds to the case when $\gamma \in (0,a)$. For $n=4$ such caustic is
$$
\gamma=\frac{ab}{a+b}.
$$

From $l(-1)=-b^{-1}$, $l(-\alpha)=0$
we get
$$
\hat a = \frac{1}{1-\alpha}\frac{1}{b}, \quad \hat b =\frac{\alpha}{1-\alpha}\frac{1}{b}.
$$
Then, from $l(\alpha)=\frac{1}{a}$ we get
$$
\frac{\alpha}{1-\alpha} =\frac {b}{2a}.
$$
ie
$$
\alpha =\frac {b}{b+2a}.
$$
Finally, we calculate:
$$
l(1)=\big(1+\frac{\alpha}{1-\alpha}\big)\frac{1}{b}+\frac{\alpha}{1-\alpha}\frac{1}{b},
$$
$$
l(1)=\big(1+\frac {b}{2a}\big)\frac{1}{b}+\frac {b}{2a}\frac{1}{b}= \frac{1}{a}+\frac{1}{b}=\frac{a+b}{ab}.
$$
We recognize $\gamma^{-1}$ on the righthand side of the last relation.

This proves the following:

\begin{proposition} In this case the polynomial $\hat p_4(s)$ equals, up to a constant multiplier, to
	\begin{equation}\label{eq:p4t5}
	\hat p_4(s) \sim T_2\left(\frac{2ab^{2}s^{2}-2b^{2}s-(a+b)}{a+b}\right),
	\end{equation}
	where $T_2(x)=2x^2-1$ is the second Chebyshev polynomial.
\end{proposition}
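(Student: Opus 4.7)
The plan is to follow the template of the three preceding propositions. The goal is to construct an affine bijection $l$ from the symmetric two-interval configuration $[-1,-\alpha]\cup[\alpha,1]$ onto the asymmetric pair $[-b^{-1},0]\cup[a^{-1},\gamma^{-1}]$ arising from the Cayley datum $\gamma = ab/(a+b) \in (0,a)$, and then pull back the explicit Akhiezer polynomial $A_4(\,\cdot\,;\alpha)$ via this map.

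First I would write $l(x) = \hat a x + \hat b$ and impose the two boundary conditions $l(-1)=-b^{-1}$ and $l(-\alpha)=0$. These are linear in $\hat a, \hat b$ and yield $\hat b = \alpha \hat a$ together with $\hat a = 1/(b(1-\alpha))$. The remaining endpoint condition $l(\alpha)=1/a$ becomes $2\alpha \hat a = 1/a$, which pins down $\alpha = b/(2a+b)$.

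Next I would verify the fourth boundary identity $l(1)=\gamma^{-1}$. Substituting the values just obtained gives $l(1) = (1+\alpha)/(b(1-\alpha)) = (a+b)/(ab)$, which indeed equals $1/\gamma$ for the $4$-periodic caustic $\gamma = ab/(a+b)$. This compatibility is the only genuine consistency check in the setup: it is not imposed but is forced by the specific Cayley value of $\gamma$ for $n=4$, and it confirms that the Akhiezer parameter $\alpha = b/(2a+b)$ is the correct one.

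Finally, I would invoke the explicit formula \refeq{eq:A2m} for $m=2$, namely $A_4(x;\alpha) = \frac{(1-\alpha^2)^2}{8} T_2\!\left(\frac{2x^2-1-\alpha^2}{1-\alpha^2}\right)$, and compose with the inverse affine change $x = l^{-1}(s) = (2abs-b)/(2a+b)$. A direct simplification of the rational argument of $T_2$ using $1-\alpha^2 = 4a(a+b)/(2a+b)^2$ yields $(2x^2-1-\alpha^2)/(1-\alpha^2) = (2ab^2 s^2 - 2b^2 s - (a+b))/(a+b)$, which is exactly the argument in the statement. The main obstacle, such as it is, is this final algebraic simplification; conceptually, everything is dictated by the Cayley value of $\gamma$ combined with Akhiezer's closed-form expression for minimax polynomials on two symmetric intervals, and the nonessential prefactor $(1-\alpha^2)^2/8$ is absorbed into the $\sim$ relation.
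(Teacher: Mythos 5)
Your proposal is correct and follows essentially the same route as the paper: the same affine map $l$ with $l(-1)=-b^{-1}$, $l(-\alpha)=0$, $l(\alpha)=a^{-1}$ forcing $\alpha=b/(2a+b)$, and the consistency check $l(1)=(a+b)/(ab)=\gamma^{-1}$. You in fact go slightly further than the paper by explicitly composing $A_4(x;\alpha)$ with $x=(2abs-b)/(2a+b)$ and simplifying the argument of $T_2$ to $(2ab^2s^2-2b^2s-(a+b))/(a+b)$ — a computation the paper leaves implicit — and that simplification checks out.
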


\subsection{General Akhiezer polynomials on unions of two intervals}
Following Akhiezer \cites{Akh1,Akh2,Akh3}, let us consider the union of two intervals $[-1,\alpha]\cup [\beta, 1]$, where
\begin{equation}\label{Akhiezer}
\alpha = 1-2\sn^2(\frac{m}{n}K), \quad \beta = 2\sn^2\left(\frac{n-m}{n}K\right)-1.
\end{equation}
Define
\begin{equation}\label{AKhTAn}
TA_{n}(x,m,\kappa)=L\left(v^{n}(u)+\frac{1}{v^{n}(u)}\right),
\end{equation}
where
$$
v(u)=\dfrac{\theta_1\big(u-\frac{m}{n}K\big)}{\theta_1\big(u+\frac{m}{n}K\big)},
\quad
x=\dfrac{\sn^2(u)\cn^2(\frac{m}{n}K)+\cn^2(u)\sn^2(\frac{m}{n}K)}{\sn^2(u)-\sn^2(\frac{m}{n}K)},
$$
and
$$
L=\frac{1}{2^{n-1}}\left(\dfrac{\theta_0(0)\theta_3(0)}{\theta_0(\frac{m}{n}K)\theta_3(\frac{m}{n}K)}\right),
 \quad
  \kappa^2=\frac{2(\beta -\alpha)}{(1 -\alpha)(1+\beta)}.
$$
Here $\theta_i,\, i=0. 1, 2, 3,$ denote the standard Riemann theta functions, see for example \cite{Akh4} for more details.
Akhiezer proved the following result:

\begin{theorem}[Akhiezer]\label{th:Akhiezer}
	\begin{itemize}
		\item [(a)] The function $TA_{n}(x,m,\kappa)$ is a polynomial  of degree $n$ in $x$ with the leading coefficient $1$ and the second coefficient equal to $-n\tau_{1}$, where
		$$
		\tau_{1}=-1+2\dfrac{\sn(\frac{m}{n}K)\cn(\frac{m}{n}K)}{\dn(\frac{m}{n}K)}\left(\frac{1}{sn(\frac{2m}{n}K)}-\frac{\theta_0^{\prime}(\frac{m}{n}K)}{\theta_0(\frac{m}{n}K)}\right).
		$$
		\item[(b)]
		The maximum of the modulus of $TA_{n}$ on the union of the two intervals $[-1,\alpha]\cup [\beta, 1]$ is $L$.
		\item[(c)]
		The function $TA_{n}$ takes values $\pm L$ with alternating signs at $\mu=n-m+1$ consecutive points of the interval $ [- 1, \alpha]$ and at $\nu=m+1$ consecutive points of the interval $ [\beta, 1]$. In addition
		$$
		TA_{n}(\alpha,m,\kappa)=TA_{n}(\beta,m,\kappa)=(-1)^{m}L,
		$$
		and for any $x\in (\alpha, \beta)$, it holds:
		$$
		(-1)^{m}TA_{n}(x,m,\kappa)>L.
		$$
		\item[(d)]
		Let $F$ be a polynomial of degree $n$ in $x$ with the leading coefficient $1$, such that:
	\end{itemize}
	\begin{itemize}
		\item [i.)]
		$max|F(x)|=L$ for $x\in [-1,\alpha]\cup [\beta, 1]$;
		\item[ii)]
		F(x) takes values $\pm L$ with alternating signs at n-m+1 consecutives points of the interval
		$[-1, \alpha]$ and at m+1 consecutive points of the interval $[\beta, 1]$.
	\end{itemize}
	Then $F(x)=TA_{n}(x,m,\kappa).$
\end{theorem}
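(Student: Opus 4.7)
The four claims are proved in sequence. Part (a) is the technical core; parts (b), (c) then follow by analyzing the parametrization along the real axis, and (d) is a standard Chebyshev-type alternation argument.

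\textbf{Polynomiality (a).} I would first observe that $x(u)$ is a rational function of $\sn^2 u$, hence even in $u$, and the map $u\mapsto x(u)$ realizes a $2:1$ branched covering of $\mathbf{P}^1$ by the torus $\mathbf{C}/(2K\mathbf{Z}+2iK'\mathbf{Z})$ with branch values $\{-1,\alpha,\beta,1\}$ prescribed by \refeq{Akhiezer}. Since $\theta_1$ is odd, $v(-u)=1/v(u)$, so $v^n+v^{-n}$ is invariant under $u\mapsto -u$ and descends to a rational function of $x$. A pole count then settles the shape: $x(u)$ has simple poles at the two points $u=\pm\frac{m}{n}K$ lying over $\infty$, while $v^n$ has a zero of order $n$ at $u=\frac{m}{n}K$ and a pole of order $n$ at $u=-\frac{m}{n}K$, with the roles swapped for $v^{-n}$. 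Thus $v^n+v^{-n}$ has poles of exact order $n$ at both preimages of $\infty$ and is holomorphic elsewhere, so it is a polynomial of degree $n$ in $x$. Expansion of the theta factors at $u=\pm\frac{m}{n}K$ gives the leading coefficient, and the prefactor $L$ is calibrated precisely so that this leading coefficient equals $1$; the sub-leading term yields the stated formula for $-n\tau_1$ via the logarithmic derivative $\theta_0'/\theta_0$.

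\textbf{Bounds and extrema (b), (c).} On the real segments $[-1,\alpha]$ and $[\beta,1]$ the corresponding $u$-values lie on horizontal sides of the period rectangle where $|v(u)|=1$. Writing $v=e^{i\phi(u)}$ gives $TA_n=2L\cos(n\phi(u))$, which is bounded in modulus by $L$. The extrema sit at the solutions of $n\phi\in\pi\mathbf{Z}$; counting them against the monotone range of $\phi$ on each segment produces exactly $\mu=n-m+1$ extrema on $[-1,\alpha]$ and $\nu=m+1$ on $[\beta,1]$, with alternating signs. The common value $(-1)^mL$ at the inner endpoints $\alpha,\beta$ follows from the quasi-periodicity of $\theta_1$ under half-period shifts, and the strict inequality $(-1)^mTA_n(x)>L$ on $(\alpha,\beta)$ is obtained by tracking $v(u)$ for $u$ in the strip connecting the two real regions, where $|v|\ne 1$ forces $v^n+v^{-n}$ to exceed $2$ in modulus with the correct sign.

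\textbf{Uniqueness (d) and main obstacle.} Let $F$ be a monic polynomial of degree $n$ satisfying (i); set $D=TA_n-F$, of degree at most $n-1$. At the $\mu+\nu=n+2$ alternation points $x_1<\cdots<x_{n+2}$ of $TA_n$, the values $TA_n(x_j)=\pm L$ alternate in sign, while $|F(x_j)|\le L$ by (i); hence $D(x_j)$ inherits the alternating sign pattern of $TA_n(x_j)$ and has at least $n+1$ sign changes, producing at least $n+1$ zeros, which contradicts $\deg D\le n-1$ unless $D\equiv 0$, forcing $F=TA_n$. The main obstacle of the whole proof sits in (a): certifying that $v^n+v^{-n}$ is globally single-valued on the torus (not merely on a double cover) and extracting the precise leading and sub-leading coefficients. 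Both rely on careful bookkeeping of the quasi-periodicity behavior of $\theta_1,\theta_0,\theta_3$ under shifts by the full periods $2K$ and $2iK'$, to confirm the normalization factor $L$. Once (a) is secured, (b)--(d) reduce to relatively standard extremal-polynomial techniques.
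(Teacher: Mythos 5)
The paper does not prove this statement: it is quoted verbatim as Akhiezer's classical theorem, attributed to \cites{Akh1,Akh2,Akh3} with the remark ``Akhiezer proved the following result,'' so there is no internal proof to compare yours against. Judged on its own, your outline follows the standard route (the same one Akhiezer and the textbook \cite{Akh4} take): realize $x(u)$ as a degree-two elliptic covering branched over $\{-1,\alpha,\beta,1\}$, use the oddness of $\theta_1$ to get $v(-u)=1/v(u)$ so that $v^n+v^{-n}$ descends, count poles over $x=\infty$ to get polynomiality of degree $n$, read off the equioscillation from $|v|=1$ on the real slits, and finish uniqueness by the alternation/sign-change argument. Two points of bookkeeping deserve a flag. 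First, in part (b) you write that $TA_n=2L\cos(n\phi(u))$ ``is bounded in modulus by $L$''; as written that bound is $2L$, not $L$ --- the discrepancy traces back to the normalization of $L$ in the paper's displayed formula (compare the Chebyshev case, where the monic polynomial is $2^{1-n}T_n=2^{-n}(v^n+v^{-n})$ with deviation $2^{1-n}$), so you should either adjust the constant or note that the paper's $L$ absorbs the factor of $2$. Second, in part (d) the count of sign changes is $n$, not $n+1$: there are $\mu+\nu=n+2$ extremal points in total, but the signs do \emph{not} alternate across the gap $(\alpha,\beta)$, since $TA_n(\alpha)=TA_n(\beta)=(-1)^mL$ by part (c); the $n-m$ changes inside $[-1,\alpha]$ and the $m$ changes inside $[\beta,1]$ still give $D=TA_n-F$ at least $n$ zeros (with the usual care when $D(x_j)=0$), which already contradicts $\deg D\le n-1$. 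Neither slip is fatal, and the identification of the quasi-periodicity bookkeeping for $\theta_0,\theta_1,\theta_3$ as the technical crux of part (a) is accurate.
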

Let us determine the affine transformations when the caustic is an ellipse.

\paragraph*{Case $\gamma \in (-b,0)$.}

For
$h:[-1,\alpha]\cup[\beta, 1]\rightarrow [\gamma^{-1},-b^{-1}]\cup  [0,a^{-1}]$, $h(x)=\hat a x +\hat b$,
we get
$$
\hat a = \frac{1}{\beta-\alpha}\frac{1}{b},
\quad
\hat b =\frac{-\beta}{\beta-\alpha}\frac{1}{b},
\quad
\frac{1-\beta}{\beta -\alpha}=\frac{b}{a}.
$$
Thus:

\begin{equation}\label{GenAkhiezerTrans}
\gamma=\frac{\beta-1}{1+\beta}a=\frac{ \alpha-\beta}{\beta+1}b
\end{equation}

\begin{example}\label{ex:n=3,m=2}
	For $n=3$ and $m=2$. From \refeq{Akhiezer}, one gets:
	$$
	\alpha = 1-2\sn^2\frac{2}{3}K, \quad \beta = 2\sn^2\frac{K}{3}-1.
	$$
		It follows that:
	\begin{equation}\label{Eq1}
	\frac{b}{a}=t= \frac{1-\beta}{\beta-\alpha}= \frac{1-\sn^2\frac{K}{3}}{\sn^2\frac{2}{3}K
		+\sn^2\frac{K}{3}-1},
	\end{equation}
	Thus
	\begin{equation}\label{GenAkhiezerTrans1}
	\gamma=b\frac{\alpha -\beta}{\beta+1}=b\frac{1-\sn^2\frac{K}{3}-sn^2\frac{2}{3}K}{\sn^2\frac{K}{3}}.
	\end{equation}
	From the addition formula:
	$$
	\sn\frac{2}{3}K=\sn(K-\frac{K}{3})=\frac{\sn K \cn\frac{-K}{3} \dn\frac{-K}{3}+\sn\frac{-K}{3}\cn K \dn K}{1-\kappa^2\sn^2\frac{-K}{3}\sn^2K}.
	$$
	Hence
	$$
	\sn^2\frac{2}{3}K=\frac{1-\sn^2\frac{K}{3}}{1-\kappa^2\sn^2\frac{K}{3}}
	=
	\frac{\sn^2\frac{2}{3}K-1}{\kappa^2\sn^2\frac{2}{3}K-1}.
	$$
	Let $sn\frac{K}{3}=Z$:
	\begin{equation*}
	\kappa^{2}=\frac{2Z-1}{Z^3(2-Z)}.
	\end{equation*}
	Also
	$$
	\alpha = 2Z^2-4Z+1,
	\quad
	\beta =2Z^2-1.
	$$
	Equation \refeq{Eq1} implies that
	\begin{equation}
	t=\frac{1-Z^2}{2Z-1}.
	\end{equation}
	Thus, we have two expressions for $\gamma$. One is from the Cayley condition \refeq{CaleyEq1}
	and the other is from \refeq{GenAkhiezerTrans}. We want to show that these two expressions are identical, that is
	\begin{equation}\label{EquaLambda}
	b\frac{\alpha -\beta}{\beta+1}=-\dfrac{ab}{(a+b)^{2}}(-a+b+2\sqrt{a^{2}+ab+b^{2}}).
	\end{equation}
	In order to do so, we first express both the left hand side and the right hand side of the above equation in terms of $t=\dfrac{b}{a}$ and then transform both sides in terms of $Z$. We show that the left and the right handsides yield the same expression:
	$$
	\frac{Z^2-Z+1}{2Z-1},
	$$
	therefore \refeq{EquaLambda} holds.
\end{example}

\paragraph*{Case $\gamma \in (0,a)$.}

For
$
l:[-1,\alpha]\cup[\beta, 1]\rightarrow [-b^{-1},0]\cup  [a^{-1}, \lambda^{-1}]$,
$l(x)=\hat a x +\hat b$,
we get
$$
\hat a = \frac{1}{\alpha + 1}\frac{1}{b},
\quad
\hat b =\frac{-\alpha}{\alpha+1}\frac{1}{b},
\quad
\frac{\alpha +1}{\beta -\alpha}=\frac{a}{b}.
$$
Thus
\begin{equation}\label{Eq:LambaPositiv}
\gamma =\frac{\alpha +1}{1-\alpha}b
\end{equation}

\begin{example}
	For $n=3$, and $m=1$. From  \refeq{Akhiezer}, one gets:
	$$
	\alpha = 1-2\sn^2\frac{K}{3}, \quad \beta = 2\sn^2\frac{2K}{3}-1.
	$$	
	\begin{equation}\label{Eq2}
	\frac{b}{a}=t= \frac{\beta - \alpha}{\alpha+1}= \frac{\sn^2\frac{2}{3}K+\sn^2\frac{K}{3}-1}{1-\sn^2\frac{K}{3}}
	\end{equation}
	Thus
	$$
	\gamma=\frac{1-\sn^2\frac{K}{3}}{\sn^2\frac{K}{3}}b.
	$$
From the addition formula:
	$$
	\sn\frac{2K}{3}=\sn(K-\frac{K}{3})=\frac{\sn K \cn\frac{-K}{3} \dn\frac{-K}{3}+\sn\frac{-K}{3}\cn K \dn K}{1-\kappa^2\sn^2\frac{-K}{3}\sn^2K}.
	$$
	Hence
	$$
	\sn^2\frac{2}{3}K=\frac{1-\sn^2\frac{K}{3}}{1-\kappa^2\sn^2\frac{K}{3}}.
	$$
	
	Let $\sn\frac{K}{3}=Z$:
	\begin{equation*}
	\kappa^{2}=\frac{2Z-1}{Z^3(2-Z)}.
	\end{equation*}
	Also
	$
	\beta  = -2Z^2+4Z-1,
	$
	and
	$\alpha =1-2Z^2$.
	Equation \refeq{Eq2} implies that
	\begin{equation}
	t=-\frac{2Z-1}{Z^2-1},
	\end{equation}
	Thus, we have two expressions for $\gamma$. One is from the Cayley condition \refeq{CaleyEq1}
	and the other is from \refeq{Eq:LambaPositiv}.
	Similarly as in Example \ref{ex:n=3,m=2}, we can show that these two expressions are identical.
\end{example}

\begin{proposition}\label{prop}
	For $n=3$ and $\gamma \in (-b,0)$, the polynomial $\hat{p}_{3}$ is up to a nonessential factor equal to:
	$$
	\hat{p}_{3} ~\sim  TA_{3}\left(2a\left(1-\sn^2\frac{K}{3}\right)s+2\sn^2\frac{K}{3}-1 ; 2, \kappa \right),
	$$
	For $n=3$ and $\gamma \in (0,a)$, the polynomial $\hat{p}_{3}$ is up to a nonessential factor equal to:	
	$$
	\hat{p}_{3} ~\sim  TA_{3}\left(2b\left(1-\sn^2\frac{K}{3}\right)s+1-2\sn^2\frac{K}{3} ; 1, \kappa \right)
	$$
\end{proposition}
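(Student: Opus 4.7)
My plan is to deduce the proposition by combining three ingredients already assembled in this section: (i) the structural fact that $\hat p_n$ must be, up to a normalizing constant, the generalized Chebyshev polynomial on the union $[c_1,c_2]\cup[c_3,c_4]$; (ii) Akhiezer's Theorem \ref{th:Akhiezer}, which characterizes $TA_n(x;m,\kappa)$ as the unique such polynomial on the model union $[-1,\alpha]\cup[\beta,1]$ with the appropriate partition; and (iii) the affine identifications set up in the two subcases displayed just before the proposition. The strategy is simply to assemble these pieces carefully for $n=3$, $m=2$ (case $\gamma\in(-b,0)$) and $n=3$, $m=1$ (case $\gamma\in(0,a)$).

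First, I would make the two affine maps explicit. In the case $\gamma\in(-b,0)$, one has ordering $c_1=\gamma^{-1}<c_2=-b^{-1}<c_3=0<c_4=a^{-1}$, and the map $h(x)=\hat a x+\hat b$ constructed in \refeq{GenAkhiezerTrans1} sends $[-1,\alpha]\cup[\beta,1]$ bijectively and affinely onto $[\gamma^{-1},-b^{-1}]\cup[0,a^{-1}]$; the explicit coefficients obtained there give precisely $\hat a x+\hat b=2a(1-\sn^2\tfrac{K}{3})s+2\sn^2\tfrac{K}{3}-1$ after writing $s=1/x$ in the target variable. The key compatibility check is that the value of $\alpha$ determined by \refeq{Akhiezer} with $m=2$, fed through \refeq{GenAkhiezerTrans}, reproduces the Cayley value of $\gamma$ from \refeq{CaleyEq1}; this is exactly what Example \ref{ex:n=3,m=2} verifies. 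The case $\gamma\in(0,a)$ is analogous with $m=1$.

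Next, I would invoke the characterization of $\hat p_n$ coming from Theorem \ref{th:impactwinding}: the polynomial $\hat p_3$ is a monic-up-to-scaling solution of the Pell equation on $[c_1,c_2]\cup[c_3,c_4]$ whose partition equals $(3,n_1)$, with the correct count of internal extremal points on each interval. Pulling this back along the affine map $h$ transforms $\hat p_3$ into a polynomial on $[-1,\alpha]\cup[\beta,1]$ of degree $3$, with leading coefficient that can be rescaled to $1$, whose modulus equals its maximum with alternating signs at the prescribed numbers of points on the two intervals. By part (d) of Theorem \ref{th:Akhiezer}, this polynomial must coincide with $TA_3(x;m,\kappa)$ for the appropriate $m$ (namely $m=2$ for $\gamma\in(-b,0)$, so that the partition $\mu=n-m+1=2,\nu=m+1=3$ matches the bouncing counts, and $m=1$ for $\gamma\in(0,a)$). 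Undoing the affine change $x\mapsto \hat a s^{-1}+\hat b$, i.e.\ expressing everything in terms of $s$, yields the stated formulas.

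The main obstacle is not the construction itself but verifying that the Akhiezer partition $(\mu,\nu)$ of extremal points matches the reflection count $(n_1,n_2)$ given by Theorem \ref{th:impactwinding}, and that the choice of $m$ (either $2$ or $1$) is forced by the sign of $\gamma$; a sign error there would give the wrong polynomial. The transcendental identification between the elliptic-function definition of $\alpha,\beta,\kappa$ in \refeq{Akhiezer} and the algebraic formula \refeq{CaleyEq1} for $\gamma$ has already been reduced in the examples to a polynomial identity in $Z=\sn\tfrac{K}{3}$ using \refeq{Add F}, so once the matching of partitions is secured, the proof is complete up to the routine algebraic checks already indicated.
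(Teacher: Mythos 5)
Your proposal is correct and follows essentially the same route as the paper: the paper's ``proof'' of Proposition \ref{prop} consists precisely of the preceding construction of the affine maps $h$ and $l$, the verification in the accompanying examples that the resulting $\gamma$ agrees with the Cayley condition \refeq{CaleyEq1}, and the (implicit) appeal to the uniqueness characterization of the Akhiezer polynomial via its partition of alternation points, which you make explicit through Theorem \ref{th:impactwinding} and part (d) of Theorem \ref{th:Akhiezer}. The only slip is notational: the argument of $TA_3$ in the statement is the inverse map $h^{-1}(s)=(s-\hat b)/\hat a=(\beta-\alpha)bs+\beta$, not $\hat a x+\hat b$, and no reciprocal substitution $s=1/x$ is involved at that stage.
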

Now, using the Akhiezer Theorem part (c), see Theorem \ref{th:Akhiezer}, one can compare and see that the number of internal extremal points
coincides with $n_1-1$ and $n_2-1$ as proposed in Theorem \ref{th:impactwinding}. These numbers match with Figure \ref{fig:ana5} and the table from Section \ref{sec:examples-table}.

\section{Periodic light-like trajectories and Chebyshev polynomials}
\label{sec:ll}

The light-like billiard trajectories, by definition, have at each point the velocity $v$ satisfying $\langle v,v\rangle=0$.
Their caustic is the conic at infinity $\C_{\infty}$.
Since successive segments of such trajectories are orthogonal to each other, the light-like trajectories can close only after an even number of reflections.
In \cite{DragRadn2012adv}*{Theorem 3.3}, it is proved that a light-like billiard trajectory within $\E$ is periodic with even period $n$ if and only if
\begin{equation}\label{eq:arc}
\arccot\sqrt{\frac ab}
\in
\left\{ \frac{k\pi}n \mid 1\le k<\frac{n}2, \left(k,\frac{n}2\right)=1 \right\}.
\end{equation}
For $k$ not being relatively prime with $n/2$, the corresponding light-like trajectories are also periodic, and their period is a divisor of $n$.

Applyin the limit $\gamma\to+\infty$ in Corollary \ref{cor:pell-periodic}, we get
the following proposition.

\begin{proposition}\label{prop:ll}
A light-like trajectory within ellipse $\E$ is periodic with period $n=2m$ if and only if there exist real polynomials $\hat{p}_m(s)$ and $\hat{q}_{m-1}(s)$ of degrees $m$ and $m-1$ respectively if and only if:
\begin{itemize}
\item $\hat{p}_m^2(s)-\left(s-\dfrac1a\right)\left(s+\dfrac1b\right)\hat{q}_{m-1}^2(s)=1$; and
\item $\hat{q}_{m-1}(0)=0$.
\end{itemize}	
\end{proposition}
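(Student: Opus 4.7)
The plan is to derive Proposition \ref{prop:ll} as the limiting case of Theorem \ref{th:polynomial}(a) (equivalently, Corollary \ref{cor:pell-periodic}) as the caustic parameter $\gamma$ tends to $+\infty$. This limit is natural because the degenerate caustic $\C_\infty$ corresponds precisely to the light-like directions. Observe first that a light-like periodic trajectory must have an even period $n = 2m$, since consecutive segments along such a trajectory are mutually orthogonal, so the period count $n$ must be even.

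For the forward implication, I would apply Theorem \ref{th:polynomial}(a) to a $(2m)$-periodic trajectory with non-degenerate caustic $\C_\gamma$, obtaining real polynomials $p_m(s)$ and $q_{m-2}(s)$ of degrees $m$ and $m-2$ satisfying
$$p_m^2(s) - s\left(s-\tfrac1a\right)\left(s+\tfrac1b\right)\left(s-\tfrac1{\gamma}\right) q_{m-2}^2(s) = 1.$$
Passing to the limit $\gamma \to +\infty$, the factor $(s-1/\gamma)$ becomes $s$, and the equation reduces to $p_m^2(s) - s^2(s-1/a)(s+1/b)\,q_{m-2}^2(s) = 1$. Setting $\hat{p}_m(s) := p_m(s)$ and $\hat{q}_{m-1}(s) := s\,q_{m-2}(s)$, one has $\deg \hat{q}_{m-1} = m-1$ and $\hat{q}_{m-1}(0) = 0$, and the identity rewrites in the requested form.

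For the converse, I would exploit the classical theory of Pell equations over a quadratic base. The affine change of variable $x = (2abs + a - b)/(a+b)$ sends $[-1/b, 1/a]$ to $[-1,1]$ and gives $(s-1/a)(s+1/b) = (a+b)^2(x^2-1)/(4a^2b^2)$; after a constant rescaling of $\hat{q}_{m-1}$, the Pell equation transforms into the classical $\tilde{p}_m^2 - (x^2-1)\tilde{q}_{m-1}^2 = 1$, whose degree-$m$ solution is the Chebyshev pair $(T_m, U_{m-1})$. The condition $\hat{q}_{m-1}(0) = 0$ becomes $U_{m-1}(x_0) = 0$ for $x_0 = (a-b)/(a+b)$. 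Writing $\phi = \arccot\sqrt{a/b}$, so that $\sin^2\phi = b/(a+b)$ and $\cos^2\phi = a/(a+b)$, we compute $x_0 = \cos 2\phi$; since $U_{m-1}(\cos 2\phi) = \sin(n\phi)/\sin(2\phi)$, the vanishing is equivalent to $\arccot\sqrt{a/b} = k\pi/n$ for some integer $1 \le k < n/2$, precisely matching the characterization \refeq{eq:arc} of light-like $n$-periodicity from \cite{DragRadn2012adv} (with the primitivity condition $(k, n/2) = 1$ singling out period exactly $n$).

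The main obstacle is making the limit $\gamma \to +\infty$ rigorous, since Theorem \ref{th:polynomial}(a) is proved only for non-degenerate caustics and one must ensure that the solution polynomials admit well-defined limits; the Chebyshev-based argument used above for the converse provides an independent and self-contained treatment that also covers the direct implication, bypassing the limit entirely.
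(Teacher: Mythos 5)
Your proposal is correct, and it is worth comparing with the paper, whose entire ``proof'' of this proposition is the single sentence preceding it: \emph{``Applying the limit $\gamma\to+\infty$ in Corollary \ref{cor:pell-periodic}, we get the following proposition.''} Your forward direction is exactly this limiting argument, carried out explicitly from Theorem \ref{th:polynomial}(a): the factor $\left(s-\frac1{\gamma}\right)$ degenerates to $s$, and absorbing it via $\hat q_{m-1}(s)=s\,q_{m-2}(s)$ produces both the reduced Pell equation and the vanishing condition $\hat q_{m-1}(0)=0$ in one stroke — a cleaner bookkeeping than the paper's nominal reference to Corollary \ref{cor:pell-periodic}, whose degrees $(n,n-2)$ do not directly match the $(m,m-1)$ in the statement. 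Where you genuinely diverge is in recognizing that the limit is only heuristic (Theorem \ref{th:polynomial} is proved for non-degenerate caustics, and one must justify convergence of the solution polynomials) and in supplying an independent, self-contained argument for both implications: reduce the Pell equation to $\tilde p_m^2-(x^2-1)\tilde q_{m-1}^2=1$ by the affine map $x=(2abs+a-b)/(a+b)$, identify the solution with the Chebyshev pair $(T_m,U_{m-1})$, and translate $\hat q_{m-1}(0)=0$ into $U_{m-1}(\cos 2\phi)=\sin(n\phi)/\sin(2\phi)=0$ with $\phi=\arccot\sqrt{a/b}$, which is condition \refeq{eq:arc} of \cite{DragRadn2012adv}. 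This is precisely the content the paper defers to the unnumbered proposition immediately following \ref{prop:ll}; you have folded it into the proof itself, which buys rigor (no degeneration of the elliptic curve to justify) at the cost of invoking the external characterization \refeq{eq:arc} as the bridge to the geometry, and of implicitly using the uniqueness (up to sign) of the degree-$m$ solution of the classical Pell equation — a standard fact you should state. Your parenthetical on the primitivity condition $(k,n/2)=1$ correctly isolates the only delicate point of interpretation (minimal period versus closing after $n$ steps), which is present in the paper as well.
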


The first condition from Proposition \ref{prop:ll} is the standard Pell equation describing extremal polynomials on one interval $[-1/b,1/a]$, thus the polynomials $\hat{p}_m$ can be obtained as Chebyshev polynomials composed with an affine transformation $[-1/b,1/a]\to[-1,1]$.
The additional condition $\hat{q}_{m-1}(0)=0$, which is equivalent to $\hat{p}_m'(0)=0$ implies an additional constraint on parameters $a$ and $b$.
We have the following

\begin{proposition} The polynomials $\hat {p}_m$ and the parameters $a, b$ have the following properties:
\begin{itemize}
	\item
$\hat{p}_m(s)=T_m\left(\dfrac{2ab}{a+b}s+\dfrac{a-b}{a+b}\right)$, where $T_m$ is defined by \refeq{eq:cheb2};
\item
the condition $\hat{q}_{m-1}(0)=0$ is equivalent to \refeq{eq:arc}.
\end{itemize}
\end{proposition}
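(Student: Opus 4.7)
The plan is to reduce the Pell equation from Proposition~\ref{prop:ll} to the classical Chebyshev Pell identity by a suitable affine change of variable, and then translate the vanishing condition $\hat{q}_{m-1}(0)=0$ into a trigonometric condition on the ratio $a/b$.

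First, I introduce the affine map $\varphi\colon s\mapsto x=\dfrac{2ab}{a+b}s+\dfrac{a-b}{a+b}$, and verify by direct substitution that $\varphi(1/a)=1$, $\varphi(-1/b)=-1$. Writing $A=\dfrac{2ab}{a+b}$, this gives $x-1=A(s-1/a)$ and $x+1=A(s+1/b)$, so $x^{2}-1=A^{2}\bigl(s-1/a\bigr)\bigl(s+1/b\bigr)$. The Chebyshev polynomials of the first and second kinds satisfy the classical Pell identity $T_{m}(x)^{2}-(x^{2}-1)U_{m-1}(x)^{2}=1$. Substituting $x=\varphi(s)$ yields
\begin{equation*}
T_{m}(\varphi(s))^{2}-\bigl(s-1/a\bigr)\bigl(s+1/b\bigr)\bigl(A\,U_{m-1}(\varphi(s))\bigr)^{2}=1,
\end{equation*}
which is exactly the first equation in Proposition~\ref{prop:ll} with
\begin{equation*}
\hat{p}_{m}(s)=T_{m}(\varphi(s)),\qquad \hat{q}_{m-1}(s)=A\,U_{m-1}(\varphi(s)).
\end{equation*}
By the well-known uniqueness (up to sign) of the minimal-degree solution to the Pell equation over a single interval, this is the only candidate, so the first bullet of the proposition is established.

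For the second bullet, observe that $\hat{q}_{m-1}(0)=0$ if and only if $U_{m-1}(B)=0$, where $B=\varphi(0)=(a-b)/(a+b)$. The zeros of $U_{m-1}$ are exactly $\cos(k\pi/m)$ for $k=1,\dots,m-1$, so the condition becomes $(a-b)/(a+b)=\cos(k\pi/m)$ for some such $k$. Setting $\phi=\arccot\sqrt{a/b}$ gives $\cos\phi/\sin\phi=\sqrt{a/b}$ and therefore
\begin{equation*}
\frac{a-b}{a+b}=\frac{\cos^{2}\phi-\sin^{2}\phi}{\cos^{2}\phi+\sin^{2}\phi}=\cos(2\phi).
\end{equation*}
Since $\phi\in(0,\pi/2)$, the equation $\cos(2\phi)=\cos(k\pi/m)$ forces $2\phi=k\pi/m$, i.e.\ $\phi=k\pi/n$ with $1\le k\le m-1=n/2-1$.

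This matches the condition \refeq{eq:arc} except for the coprimality requirement $(k,n/2)=1$. The reason this additional condition appears is precisely the remark after \refeq{eq:arc}: if $\gcd(k,n/2)=d>1$, writing $k=dk'$ and $m=dm'$ gives $\phi=k'\pi/(2m')$, so the trajectory already closes at the smaller even period $2m'$; equivalently, the polynomial $T_{m'}(\varphi(s))$ already satisfies the requirement from Proposition~\ref{prop:ll}, and $T_{m}=T_{m'}\circ(\text{something})$ is not the minimal one. Thus the minimal period is exactly $n=2m$ iff the extra coprimality condition holds, finishing the equivalence with \refeq{eq:arc}.

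The only mildly delicate step is the uniqueness invoked in the first paragraph: strictly, one should justify that any polynomial pair $(\hat p_m,\hat q_{m-1})$ of the prescribed degrees solving the Pell equation over the single interval $[-1/b,1/a]$ coincides, up to a sign in $\hat p_m$, with the Chebyshev pair. This is standard and follows either from the classical theory of Chebyshev extremal polynomials or, in algebraic language, from the fact that the group of units in the quadratic extension $\mathbf{R}[s,\sqrt{(s-1/a)(s+1/b)}]$ is generated by the fundamental solution of degree one.
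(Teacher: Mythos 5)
Your proof is correct and follows essentially the same route as the paper: the same affine map $h(s)=(2abs+a-b)/(a+b)$, reduction of the Pell equation to the Chebyshev one, and identification of $\hat q_{m-1}(0)=0$ with $h(0)=\cos(k\pi/m)$ (the paper phrases this via the internal extremal points of $T_m$, you via the zeros of $U_{m-1}=T_m'/m$, which is the same condition). You are in fact slightly more explicit than the paper on two points it leaves implicit --- the uniqueness of the degree-$m$ Pell solution and the coprimality requirement $(k,n/2)=1$ in \refeq{eq:arc}, which the paper only covers in the remark following that equation.
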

\begin{proof}
The increasing affine transformation $h:[-1/b, 1/a]\rightarrow [-1, 1]$ is given by the formula $h(s)=(2ab s + a-b)/(a+b)$.
The internal extremal points of the Chebyshev polynomial $T_m$ of degree $m$ on the interval $[-1, 1]$ are given by
$$
x_k=\cos \left(\frac{k}{m}\pi\right), \quad k=1, \dots, m-1,
$$
according to the formula \refeq{eq:cheb2}. The second item follows from $h(0)=x_k$. This is equivalent
to
$$
\frac{a-b}{a+b}\in\left\{\cos \left(\frac{k}{m}\pi\right)\mid k=1, \dots, m-1\right\},
$$
which is equivalent  to  \refeq{eq:arc}.
\end{proof}

\subsection*{Acknowledgment}
\addcontentsline{toc}{section}{Acknowledgment}
The research of V.~D.~and M.~R.~was supported by the Discovery Project \#DP190101838 \emph{Billiards within confocal quadrics and beyond} from the Australian Research Council and Project \#174020 \emph{Geometry and Topology of Manifolds, Classical Mechanics and Integrable Systems} of the Serbian Ministry of Education, Technological Development and Science.
V.~D.~would like to thank Sydney Mathematics Research Institute and their International Visitor Program for kind hospitality.

\begin{bibdiv}
	\addcontentsline{toc}{section}{References}
	\begin{biblist}
		\bib{Akh1}{article}{
			author={Akhiezer, N. I.},
			title={\"Uber einige Funktionen, welche in zwei gegebenen Interwallen am wenigsten von Null abweichen. I Teil},
			journal={Izvestiya Akad. Nauk SSSA, VII ser., Otd. mat. est. nauk},
			date={1932},
			number={9},
			pages={1163--1202}
		}
		
		\bib{Akh2}{article}{
			author={Akhiezer, N. I.},
			title={\"Uber einige Funktionen, welche in zwei gegebenen Interwallen am wenigsten von Null abweichen. II Teil},
			journal={Izvestiya Akad. Nauk SSSA, VII ser., Otd. mat. est. nauk},
			date={1933},
			number={3},
			pages={309--344}
		}
		
		\bib{Akh3}{article}{
			author={Akhiezer, N. I.},
			title={\"Uber einige Funktionen, welche in zwei gegebenen Interwallen am wenigsten von Null abweichen. III Teil},
			journal={Izvestiya Akad. Nauk SSSA, VII ser., Otd. mat. est. nauk},
			date={1933},
			number={4},
			pages={499--536}
		}
	
	\bib{AhiezerAPPROX}{book}{
		author={Ahiezer, N. I.},
		title={Lekcii po Teorii Approksimacii},
		language={Russian},
		publisher={OGIZ, Moscow-Leningrad},
		date={1947},
		pages={323}
	}
	
	\bib{Akh4}{book}{
		author={Akhiezer, N. I.},
		title={Elements of the theory of elliptic functions},
		series={Translations of Mathematical Monographs},
		volume={79},
		note={Translated from the second Russian edition by H. H. McFaden},
		publisher={American Mathematical Society, Providence, RI},
		date={1990},
		pages={viii+237},
		isbn={0-8218-4532-2},
		review={\MR{1054205}},
	}

		\bib{BirkM1962}{article}{
			title={Confocal Conics in Space-Time},
			author={Birkhoff, Garrett},
			author={Morris, Robert},
			journal={The American Mathematical Monthly},
			volume={69},
			number={1},
			date={1962},
			pages={1-4}
		}

\bib{Drag2010}{article}{
	author={Dragovi\'c, Vladimir},
	title={Geometrization and generalization of the Kowalevski top},
	journal={Communications in Mathematical Physics},
	eprint={arXiv:0912.3027},
	date={2010},
	volume={298},
	number={1},
	pages={37--64},
	note={DOI: 10.1007/s00220-010-1066-z}
}

\bib{Drag2012}{article}{
	author={Dragovi\'c, Vladimir},
	title={Algebro-geometric approach to the Yang-Baxter equation and related
		topics},
	journal={Publ. Inst. Math. (Beograd) (N.S.)},
	volume={91(105)},
	date={2012},
	pages={25--48},
	issn={0350-1302},
}

\bib{DragKuk2014jgm}{article}{
	author={Dragovi\'c, Vladimir},
	author={Kuki\'c, Katarina},
	title={Discriminantly separable polynomials and quad-equations},
	journal={J. Geom. Mech.},
	volume={6},
	date={2014},
	number={3},
	pages={319--333},
}

\bib{DragKuk2014rcd}{article}{
	author={Dragovi\'c, Vladimir},
	author={Kuki\'c, Katarina},
	title={Systems of Kowalevski type and discriminantly separable
		polynomials},
	journal={Regul. Chaotic Dyn.},
	volume={19},
	date={2014},
	number={2},
	pages={162--184},
	issn={1560-3547},
	review={\MR{3189255}},
	doi={10.1134/S1560354714020026},
}

\bib{DragKuk2017}{article}{
	author={Dragovi\'c, Vladimir},
	author={Kuki\'c, Katarina},
	title={Discriminantly separable polynomials and generalized Kowalevski top},
	journal={Theoretical and Applied Mechanics},
	volume={44},
	date={2017},
	number={2},
	pages={229--236},
}

		\bib{DragRadn2012adv}{article}{
			author={Dragovi\'c, Vladimir},
			author={Radnovi\'c, Milena},
			title={Ellipsoidal billiards in pseudo-Euclidean spaces and relativistic quadrics},
			journal={Advances in Mathematics},
			volume={231},
			pages={1173--1201},
			date={2012}
		}
		
		\bib{DragRadn2013publ}{article}{
			author={Dragovi\'c, Vladimir},
			author={Radnovi\'c, Milena},
			title={Minkowski plane, confocal conics, and billiards},
			journal={Publ. Inst. Math. (Beograd) (N.S.)},
			volume={94(108)},
			date={2013},
			pages={17--30}
		}
		
		\bib{DragRadn2018}{article}{
			author={Dragovi\'c, Vladimir},
			author={Radnovi\'c, Milena},
			title={Periodic ellipsoidal billiard trajectories and extremal polynomials},
			date={2019},
			eprint={arXiv:1804.02515 [math.DS]},
journal={Comm. Math. Physics},
		}
		
		\bib{DragRadn2019rcd}{article}{
			author={Dragovi\'c, Vladimir},
			author={Radnovi\'c, Milena},
			title={Caustics of Poncelet polygons and classical extremal polynomials},
			journal={Regul. Chaotic Dyn.},
			volume={24},
			date={2019},
			number={1},
			pages={1--35}
		}
		
		\bib{GKT2007}{article}{
			author={Genin, D.},
			author={Khesin, B.},
			author={Tabachnikov, S.},
			title={Geodesics on an ellipsoid in Minkowski space},
			journal={L'Enseign. Math.},
			volume={53},
			date={2007},
			pages={307--331}
		}

\bib{JJ1}{article}{
	author={Jovanovi\'{c}, Bo\v{z}idar},
	author={Jovanovi\'{c}, Vladimir},
	title={Geodesic and billiard flows on quadrics in pseudo-Euclidean
		spaces: L--A pairs and Chasles theorem},
	journal={Int. Math. Res. Not. IMRN},
	date={2015},
	number={15},
	pages={6618--6638}
}

\bib{JJ2}{article}{
	author={Jovanovi\'{c}, Bo\v{z}idar},
	author={Jovanovi\'{c}, Vladimir},
	title={Virtual billiards in pseudo-Euclidean spaces: discrete Hamiltonian
		and contact integrability},
	journal={Discrete Contin. Dyn. Syst.},
	volume={37},
	date={2017},
	number={10},
	pages={5163--5190}
}

		\bib{KhTab2009}{article}{
			author={Khesin, Boris},
			author={Tabachnikov, Serge},
			title={Pseudo-Riemannian geodesics and billiards},
			journal={Advances in Mathematics},
			volume={221},
			date={2009},
			pages={1364--1396}
		}
		
		\bib{KLN1990}{article}{
			author={Kre\u\i n, M. G.},
			author={Levin, B. Ya.},
			author={Nudel\cprime man, A. A.},
			title={On special representations of polynomials that are positive on a
				system of closed intervals, and some applications},
			note={Translated from the Russian by Lev J. Leifman and Tatyana L.
				Leifman},
			conference={
				title={Functional analysis, optimization, and mathematical economics},
			},
			book={
				publisher={Oxford Univ. Press, New York},
			},
			date={1990},
			pages={56--114}
		}
		
		\bib{RR2014}{article}{
			author={Ram\'\i rez-Ros, Rafael},
			title={On Cayley conditions for billiards inside ellipsoids},
			journal={Nonlinearity},
			volume={27},
			date={2014},
			number={5},
			pages={1003--1028}
		}

		\bib{WFSWZZ2009}{article}{
			author={Wang, Yao-Xiong},
			author={Fan, Heng},
			author={Shi, Kang-Jie},
			author={Wang, Chun},
			author={Zhang, Kai},
			author={Zeng, Yu},
			title={Full Poncelet Theorem in Minkowski dS and AdS Spaces},
			journal={Chinese Phys. Lett.},
			volume={26},
			date={2009},
			number={1},
			pages={010201}
		}
		
	\end{biblist}
\end{bibdiv}

\end{document}